\theoremstyle{comment}
\newtheorem*{mcomment}{\color{cyan}{Comment}}
\newcommand{\Sym}{\mathrm{S}} 
\newtheorem{theorem}{Theorem}[section]
\newtheorem{lemma}[theorem]{Lemma}
\newtheorem{corollary}[theorem]{Corollary}
\newtheorem{proposition}[theorem]{Proposition}
\theoremstyle{definition}
\begin{document}

\title{C-groups of high rank for the symmetric groups}

\author{Maria Elisa Fernandes}
\address{
Maria Elisa Fernandes, Center for Research and Development in Mathematics and Applications, Department of Mathematics, University of Aveiro, Portugal
}
\email{maria.elisa@ua.pt}

\author{Dimitri Leemans}
\address{Dimitri Leemans, Department of Mathematics, University of Auckland, Private Bag 92019, Auckland 1142, New Zealand
}
\email{d.leemans@auckland.ac.nz}

\begin{abstract}
We classify C-groups of ranks $n-1$ and $n-2$ for the symmetric group $S_n$.
We also show that all these C-groups correspond to hypertopes, that is, thin, residually connected flag-transitive geometries. Therefore we generalise some similar results obtained in the framework of string C-groups that are in one-to-one correspondence with abstract regular polytopes.
\end{abstract}
\maketitle
\noindent \textbf{Keywords:} C-groups, regularity, thin geometries, abstract polytopes, hypertopes, Coxeter groups, independent generating sets, inductively minimal geometries.

\noindent \textbf{2000 Math Subj. Class:} 52B11, 20D06.

\section{Introduction}
In 1896, Eliakim H.~Moore~\cite{Moore1896} gave, for the symmetric group $S_n$, a generating set consisting of the $n-1$ transpositions $(i,i+1)$, $(i = 1, \ldots, n-1)$.
This set is closely linked to the $(n-1)$-simplex and, as it was shown in~\cite{FL1}, it gives the unique abstract regular polytope of rank $n-1$ for $S_n$ when $n\geq 5$.
In~\cite{FL1}, the authors also proved that, up to isomorphism and duality, there is only one abstract regular polytope of rank $n-2$ for  $S_n$ when $n\geq 7$. 

In 1997, Francis Buekenhout, Philippe Cara and Michel Dehon~\cite{BCD97} introduced inductively minimal geometries. 
It turns out that inductively minimal geometries of rank $n-1$ are all thin with automorphism group the symmetric group $S_n$.
The $(n-1)$-simplex is one of them.

There exists a one-to-one correspondence between the set of non-isomorphic inductively minimal geometries of rank $n-1$ and the set of
    non-isomorphic trees with $n$ vertices (see \cite{CLP}). Moreover, the diagrams of these geometries are the line graphs of the corresponding trees. 

In 2002, Julius Whiston~\cite{Whis00} showed that the size of an independent generating set in the symmetric group $S_n$ is at most $n-1$. In \cite{CaCa:ta}, Peter Cameron and Cara determined all sets meeting this bound and gave a bijection between independent generating sets of size $n-1$ (up to conjugation and inversion of some generators) and residually weakly primitive  coset geometries of rank $n -1$ for the symmetric group $S_n$. 
Particularly, the geometries arising from a group generated by a set of $n-1$ transpositions in $S_n$ corresponding to the edges of a tree, are 
precisely, the inductively minimal geometries of rank $n-1$. 

There is a well known correspondence between polytopes and geometries. Indeed polytopes are thin residually connected geometries with a linear diagram.    
    In the present paper we consider geometries whose diagram need not be linear  and we generalise Theorems 1 and 2  of \cite{FL1} giving a classification of regular hypertopes (also known as thin regular residually connected geometries) of ranks $n-1$ and $n-2$ for $S_n$. The automorphism groups of hypertopes are C-groups (see \cite{FLW1}), thus we characterise the geometries in terms of finite quotients of certain Coxeter groups.

C-groups of rank $n-1$ for $S_n$ are in one-to-one correspondence with 
regular hypertopes of rank $n-1$, that are precisely the inductively minimal geometries of rank $n-1$, thus the classification follows from \cite{CaCa:ta}. 
\begin{theorem}\label{CaCa}\cite{CaCa:ta}
For $n\geq 7$, $G$ a permutation group of degree $n$ and  $\{\rho_0, \ldots, \rho_{n-2}\}$ a set of involutions of $G$, $\Gamma = (G, \{\rho_0, \ldots, \rho_{n-2}\})$ is a C-group of rank $n-1$ if and only if the permutation representation graph of $\Gamma$ is a tree $\mathcal{T}$ with $n$ vertices.
Moreover, the $\rho_i$'s are transpositions, $G\cong S_n$ and the Coxeter diagram of $\Gamma$ is the line graph of $\mathcal{T}$.
Finally, every such C-group gives a regular hypertope of rank $n-1$ for $S_n$.

\end{theorem}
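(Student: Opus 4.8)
The plan is to prove the stated equivalence in both directions, record the ``moreover'' assertions as they emerge, and then deduce the hypertope claim from the general correspondence of \cite{FLW1}. I expect the backward implication together with the diagram computation to be routine, and the forward implication --- that a rank-$(n-1)$ C-group on $n$ points must be a transposition tree --- to be the genuine obstacle, since that is where the classification of maximal independent sets is forced to enter.

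\textbf{Backward implication.} Assume the permutation representation graph is a tree $\mathcal{T}$ on the $n$ points. First I would observe that $\mathcal{T}$ has exactly $n-1$ edges, while each of the $n-1$ involutions $\rho_i$ is nontrivial and hence contributes at least one edge of colour $i$; a count then forces every $\rho_i$ to be a single transposition, with the colours in bijection with the edges. Since the transpositions labelling the edges of a connected graph spanning $\Omega=\{1,\dots,n\}$ generate $\mathrm{Sym}(\Omega)$, we obtain $G\cong S_n$. To check the intersection property I would use that, for any $J\subseteq\{\rho_0,\dots,\rho_{n-2}\}$, the subgroup $\langle J\rangle$ is the Young subgroup $\prod_C \mathrm{Sym}(V(C))$ indexed by the connected components $C$ of the subforest $F_J\subseteq\mathcal{T}$ carrying the edges of $J$. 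Then $\langle J\rangle\cap\langle K\rangle$ is the Young subgroup of the common refinement of the two component-partitions, while $\langle J\cap K\rangle$ is the one of the components of $F_J\cap F_K$. These coincide precisely because $\mathcal{T}$ is acyclic: if $u,v$ are joined by a path inside $F_J$ and by a path inside $F_K$, uniqueness of paths in a tree forces both to be the unique $\mathcal{T}$-path from $u$ to $v$, so all its edges lie in $F_J\cap F_K$. Hence the intersection property holds and $\Gamma$ is a C-group. Finally, two such transpositions commute exactly when their edges are disjoint and have a $3$-cycle as product exactly when their edges share a vertex; as $\mathcal{T}$ is simple there is no other case, so the Coxeter diagram carries an (order-$3$) branch between $i$ and $j$ iff the edges of $\mathcal{T}$ meet, i.e.\ the diagram is the line graph $L(\mathcal{T})$.

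\textbf{Forward implication.} Conversely, suppose $\Gamma$ is a C-group of rank $n-1$. Applying the intersection property to a generator and the complementary subgroup gives $\langle\rho_i\rangle\cap\langle\rho_j:j\neq i\rangle=1$, so no $\rho_i$ lies in the subgroup generated by the others; thus $\{\rho_0,\dots,\rho_{n-2}\}$ is an independent set of $n-1$ involutions inside the degree-$n$ group $G\le S_n$. By Whiston's theorem it therefore attains the maximal possible size, and I would appeal to Cameron and Cara's classification of such extremal independent sets, together with the full intersection property, to force the generators to be $n-1$ transpositions whose graph on $\Omega$ is connected and acyclic, i.e.\ a tree; this simultaneously yields $G\cong S_n$ and identifies the permutation representation graph as a tree on $n$ vertices. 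I expect this to be the main obstacle, since it must rule out involutions moving more than two points as well as cycles in the graph, using the intersection property to exclude the non-tree maximal independent sets that are merely residually weakly primitive.

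\textbf{The hypertope conclusion.} Finally, by the correspondence between C-groups and regular hypertopes recorded in \cite{FLW1}, the coset geometry built from the maximal parabolics $\langle\rho_j:j\neq i\rangle$ of such a C-group is thin and flag-transitive, and it is residually connected because its Coxeter diagram $L(\mathcal{T})$ is connected (the line graph of a tree being connected). Hence every such C-group yields a regular hypertope of rank $n-1$ for $S_n$, which completes the plan.
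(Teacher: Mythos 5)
The paper does not prove this statement itself: Theorem~\ref{CaCa} is imported wholesale from Cameron--Cara \cite{CaCa:ta}, with the hypertope clause coming from the identification of these C-groups with the inductively minimal geometries. Measured against that, your backward implication is a genuine (and correct) self-contained argument: the edge count forcing each $\rho_i$ to be a transposition, the identification of $\langle J\rangle$ with the Young subgroup of the components of $F_J$, and the uniqueness-of-paths argument showing that the meet of the two component partitions is the partition of $F_J\cap F_K$ are all sound, as is the line-graph computation. Your forward implication is, like the paper's, essentially a citation of Whiston and Cameron--Cara, which is acceptable here (one should just note explicitly that Whiston's theorem also gives that an independent set of size $n-1$ in $S_n$ necessarily generates $S_n$, which is what lets you pass from ``$G$ a permutation group of degree $n$'' to $G\cong S_n$ before invoking the Cameron--Cara classification).

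The genuine gap is in your final step. You assert that ``by the correspondence \ldots recorded in \cite{FLW1}, the coset geometry built from the maximal parabolics \ldots is thin and flag-transitive.'' That correspondence only goes one way: Theorem~\ref{cgroup} produces a C-group from a regular hypertope, while Proposition~\ref{FTcgroup} gives thinness, residual connectedness and regularity of the coset geometry \emph{only under the hypothesis} that it is flag-transitive, and the paper explicitly warns that a non-string C-group need not yield a regular hypertope. So flag-transitivity is precisely the point that must be verified, not a consequence of the C-group property; it has to be established either by identifying these geometries with the inductively minimal geometries (which are flag-transitive by construction, via \cite{BCD97,CLP,CaCa:ta}) or by a direct check in the spirit of Theorem~\ref{hermandft} or Theorem~\ref{FTlee2}, as the paper does for the rank $n-2$ case. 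Your justification of residual connectedness is also off: connectedness of the Coxeter diagram $L(\mathcal{T})$ is neither the definition nor a sufficient criterion for residual connectedness of the coset geometry (which concerns the incidence graphs of all residues of rank at least two); in the paper's framework this, too, is delivered by Proposition~\ref{FTcgroup} once flag-transitivity is in hand.
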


The main result of this paper is the following theorem that gives a classification of C-groups of rank $n-2$ for $S_n$.

\begin{theorem}\label{main}
Let  $n\geq 9$. Then $\Gamma = (S_n, \{\rho_0, \ldots, \rho_{n-3}\})$ is a C-group of rank $n-2$ if and only if 
its permutation representation graph belongs to one the following three families, up to a renumbering of the generators, where  $\rho_2, \ldots, \rho_{n-3}$ are transpositions corresponding to the edges of a tree with $n-3$ vertices and the two remaining involutions, $\rho_0$ and $\rho_1$, are either transpositions or 2-transpositions (with at least one of them being a 2-transposition).
$$(A)\xymatrix@-1.3pc{&&&&&&&\\*+[o][F]{}   \ar@{-}[r]^1& *+[o][F]{}   \ar@{-}[r]^0 & *+[o][F]{} \ar@{-}[r]^1& *+[o][F]{}\ar@{-}[r] ^2& *+[o][F]{}\ar@{.}[r]\ar@{.}[d]\ar@{.}[u]& *+[o][F]{}\ar@{-}[r]^i\ar@{.}[d]\ar@{.}[u]& *+[o][F]{}\ar@{.}[r] \ar@{.}[d]\ar@{.}[u]&*+[o][F]{}\ar@{.}[d]\ar@{.}[u]\\
&&&&&&&} \quad  (B) \xymatrix@-1.3pc{&&*+[o][F]{}  &&&&&\\*+[o][F]{}   \ar@{-}[r]^1& *+[o][F]{}   \ar@{-}[r]^0 & *+[o][F]{} \ar@{-}[u]_1\ar@{-}[r]_2 & *+[o][F]{}\ar@{.}[r]\ar@{.}[d]\ar@{.}[u]& *+[o][F]{}\ar@{-}[r]^i\ar@{.}[d]\ar@{.}[u]& *+[o][F]{}\ar@{.}[r] \ar@{.}[d]\ar@{.}[u]&*+[o][F]{}\ar@{.}[d]\ar@{.}[u]\\
&&&&&&&} (C) \xymatrix@-1.3pc{ *+[o][F]{}   \ar@{-}[r]^0\ar@{-}[d]_1  & *+[o][F]{} \ar@{-}[d]^1&&&&&& \\
*+[o][F]{}   \ar@{-}[r]_0  & *+[o][F]{} \ar@{-}[r]_2 & *+[o][F]{}\ar@{.}[r]\ar@{.}[d]\ar@{.}[u]& *+[o][F]{}\ar@{-}[r]^i\ar@{.}[d]\ar@{.}[u]& *+[o][F]{}\ar@{.}[r] \ar@{.}[d]\ar@{.}[u]&*+[o][F]{}\ar@{.}[d]\ar@{.}[u]\\
&&&&&&&} $$
Moreover, every such C-group gives a regular hypertope of rank $n-2$ for $S_n$.
\end{theorem}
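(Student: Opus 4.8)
The plan is to prove the two implications of the equivalence separately and then to deduce the hypertope statement at the end. Throughout, write $\mathcal{G}$ for the permutation representation graph of $\Gamma$ on the $n$ points, so that a transposition contributes one edge and a $2$-transposition two edges sharing a common label, and recall that the orbits of $\langle \rho_0,\ldots,\rho_{n-3}\rangle$ are exactly the connected components of $\mathcal{G}$.

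For the sufficiency direction I would fix one of the graphs (A), (B), (C) and check three things. First, that the group is $S_n$: the graph is connected, so the group is transitive; conjugating the symmetric group on the tree's vertices (generated by the tree transpositions $\rho_2,\ldots,\rho_{n-3}$) by a $2$-transposition enlarges it to a symmetric group on all but one or two of the points, and multiplying a $2$-transposition by one of its now-available constituent $2$-cycles recovers the remaining transpositions, so the group is all of $S_n$. Second, that $\{\rho_0,\ldots,\rho_{n-3}\}$ is independent and satisfies the intersection property, so that $\Gamma$ is a C-group of rank exactly $n-2$: for a subset $I$ of indices avoiding the two special generators the parabolic $\langle \rho_i : i\in I\rangle$ is a direct product of symmetric groups on a subforest (as in Theorem~\ref{CaCa}), while for $I$ containing $\rho_0$ and/or $\rho_1$ the parabolic is a small explicit permutation group on a bounded number of points that one computes directly, and the intersection property then reduces to comparing the supports of these parabolics, which are read off $\mathcal{G}$. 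Third, that the associated coset geometry is a regular hypertope: by~\cite{FLW1} a C-group yields a regular hypertope once its coset geometry is residually connected, and residual connectedness follows from the connectedness of the Coxeter diagram of each family together with that of all of its residues.

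For the necessity direction, assume $\Gamma=(S_n,\{\rho_0,\ldots,\rho_{n-3}\})$ is a C-group of rank $n-2$. Transitivity of $S_n$ makes $\mathcal{G}$ connected, so it has at least $n-1$ edges; since there are only $n-2$ generators, at least one is a $2$-transposition, and writing $\beta=E-n+1$ for the cycle rank of $\mathcal{G}$ and $e_i$ for the number of transpositions in $\rho_i$, one gets $\sum_i(e_i-1)=\beta+1$. The two key reductions are (i) $e_i\le 2$ for all $i$, i.e.\ every generator is a transposition or a $2$-transposition, and (ii) $\beta\le 1$. Granting these, the identity forces the number of $2$-transpositions to equal $\beta+1\in\{1,2\}$, so $\mathcal{G}$ is either a tree carrying a single doubled label or a connected graph with one cycle carrying two doubled labels. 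A local analysis at the doubled edges — determining how they sit relative to each other and to the adjacent transpositions, and where the tree attaches — then uses the intersection property to eliminate every configuration except the path pattern of (A), the branched pattern of (B), and the interlocking $4$-cycle of (C), with $\rho_2,\ldots,\rho_{n-3}$ forced to be transpositions spanning a tree on the remaining $n-3$ vertices.

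The main obstacle is establishing reductions (i) and (ii): neither follows from connectivity and edge counting alone, since independence by itself permits both higher products of transpositions and additional cycles. Both must be forced from the intersection property, by exhibiting for each forbidden configuration a pair of index sets $I,J$ for which $\langle\rho_i:i\in I\rangle\cap\langle\rho_j:j\in J\rangle$ is strictly larger than $\langle\rho_k:k\in I\cap J\rangle$. Because these violations are genuinely local around the extra cycles and the $2$-transpositions, the argument must enumerate the finitely many placements of the doubled edges relative to one another and to the surrounding tree and rule out all but the three surviving patterns; the hypothesis $n\ge 9$ is what guarantees the tree is large enough for these obstructions to be realised and for no sporadic configuration to survive.
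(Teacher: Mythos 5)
There are two genuine gaps in your plan, one in each half of the argument.

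For the necessity direction, the decisive step of the paper's proof is missing entirely: before any graph-theoretic analysis can begin, one must show that \emph{every} maximal parabolic subgroup $\Gamma_i$ is intransitive when $n\geq 9$. The paper devotes all of Section~\ref{transitive} to this, using Mar\'oti's order bounds for primitive groups, chain-length computations for the exceptional primitive groups of small degree, Whiston's results on independent generating sets, and a delicate case analysis of imprimitive embeddings into wreath products. Your edge-counting identity $\sum_i(e_i-1)=\beta+1$ relates two quantities neither of which is bounded a priori, so it cannot by itself deliver reductions (i) and (ii); and the ``local analysis around the doubled edges'' you invoke is only finite and local because of the fracture-graph machinery (Lemma~\ref{lemma3.1}: the fracture graph has exactly two components and every non-fracture edge joins the two components), which is available \emph{only} once all $\Gamma_i$ are known to be intransitive. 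You correctly identify (i) and (ii) as the main obstacle, but you offer no mechanism for overcoming it, and the mechanism the paper uses is a substantial group-theoretic argument, not a refinement of the combinatorics you set up.

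For the sufficiency direction and the final hypertope claim, your route is not just incomplete but wrong in one place: you assert that a C-group yields a regular hypertope once the coset geometry is residually connected, and that residual connectedness follows from connectedness of the Coxeter diagram. The paper explicitly warns that a non-string C-group need not give a regular hypertope; the property that must be verified is \emph{flag-transitivity} of the coset geometry (Proposition~\ref{FTcgroup}), from which thinness, residual connectedness and regularity then follow. The paper establishes flag-transitivity via Theorem~\ref{FTlee2}, inducting on a leaf of the permutation representation graph and running a double-coset argument to show $\Gamma_{n-3}\Gamma_i\cap\Gamma_{n-3}\Gamma_j=\Gamma_{n-3}(\Gamma_i\cap\Gamma_j)$. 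Your sketch of the intersection property itself (reduce to pairwise intersections of maximal parabolics and compare supports) is in the right spirit — it matches the paper's use of a modified version of Proposition 2E16 of McMullen--Schulte together with maximality of $\Gamma_{i,j}$ in $\Gamma_i$ — but the hypertope conclusion cannot be reached the way you propose.
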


In order to prove this theorem, we use group theory and especially knowledge of primitive and transitive imprimitive groups, but also elementary graph theory while investigating permutation representation graphs.

In Section~\ref{back} we give the background needed for the understanding of this paper. In Section~\ref{n-1} we focus on the classification of C-groups of rank $n-1$ for $S_n$, determining a presentation for these groups.
In Section~\ref{transitive} we show that all maximal parabolic subgroups of a C-group of rank $n-2$ for $S_n$ must be intransitive when $n\geq 9$.  In Section~\ref{intransitive} we deal with the case where all maximal parabolic subgroups are intransitive and determine the possible shapes of the permutation representation graphs of these C-groups. Finally, in Section~\ref{proof}, we give the proof of Theorem~\ref{main} and we obtain new presentations for the groups $S_n$ from the permutation representation graphs appearing in Theorem~\ref{main}.

We follow notation of the Atlas~\cite{Con85} for groups.
\section{background}\label{back}

\subsection{C-groups}

Let $G$ be a group generated by $r$ involutions $\rho_0,\ldots,\rho_{r-1}$. 
Then $\Gamma:=(G, \{\rho_0,\ldots,\rho_{r-1}\})$ is called a \emph{C-group of rank $r$} if $\Gamma$ satisfies the \emph{intersection property}~(\ref{ip}) with respect to its 
generators; that is, 
\begin{equation}\label{ip}
\forall J, K \subseteq \{0,\ldots,r-1\}, \langle \rho_j \mid j \in J\rangle \cap \langle \rho_k \mid k \in K\rangle = \langle \rho_j \mid j \in J\cap K\rangle
\end{equation}
Here, ``C'' stands for ``Coxeter'', as a Coxeter group is a C-group. 
Let $p_{i,j}$ be the order of $\rho_i\rho_j$. The \emph{Coxeter diagram} of $\Gamma$ is a graph whose vertices are the generators $\rho_0,\ldots,\rho_{r-1}$ and with an edge $\{\rho_i,\rho_j\}$ whenever  $p_{i,j}>2$. An edge $\{\rho_i,\rho_j\}$ of the Coxeter graph has a label $p_{i,j}$ when $p_{i,j}>3$ and it has no label when $p_{i,j}=3$.
The subgroups $\Gamma_i = \langle \rho_j : j \in \{0, \ldots, r-1\} \setminus \{i\}\rangle$ with $i=0, \ldots, r-1$ are called the \emph{maximal parabolic subgroups of $\Gamma$}.
Finally, the \emph{C-rank} of a group $G$ is the maximal size $r$ of a set of generators $\{\rho_0,\ldots,\rho_{r-1}\}$ of $G$ that satisfies~(\ref{ip}).
\subsection{Regular hypertopes}

As in~\cite{BuekCohen}, an {\it incidence system} $\Gamma := (X, *, t, I)$ is a 4-tuple such that
\begin{itemize}
\item $X$ is a set whose elements are called the {\it elements} of $\Gamma$;
\item $I$ is a set whose elements are called the {\it types} of $\Gamma$;
\item $t:X\rightarrow I$ is a {\it type function}, associating to each element $x\in X$ of $\Gamma$ a type $t(x)\in I$;
\item $*$ is a binary relation on $X$ called {\em incidence}, that is reflexive, symmetric and such that for all $x,y\in X$, if $x*y$ and $t(x) = t(y)$ then $x=y$.
\end{itemize}
The {\it incidence graph} of $\Gamma$ is the graph whose vertex set is $X$ and where two vertices are joined provided the corresponding elements of $\Gamma$ are incident.
A {\it flag} is a set of pairwise incident elements of $\Gamma$, i.e. a clique of its incidence graph.
The {\it type} of a flag $F$ is $\{t(x) : x \in F\}$.
A {\it chamber} is a flag of type $I$.
An element $x$ is {\em incident} to a flag $F$ and we write $x*F$ for that, when $x$ is incident to all elements of $F$.
An incidence system $\Gamma$ is a {\it geometry} or {\it incidence geometry} if every flag of $\Gamma$ is contained in a chamber (or in other words, every maximal clique of the incidence graph is a chamber).
The {\it rank} of $\Gamma$ is the number of types of $\Gamma$, namely the cardinality of $I$.

Let $\Gamma:= (X, *, t, I)$ be an incidence system.
Given a flag $F$ of $\Gamma$, the {\em residue} of $F$ in $\Gamma$ is the incidence system $\Gamma_F := (X_F, *_F, t_F, I_F)$ where
\begin{itemize}
\item $X_F := \{ x \in X : x * F, x \not\in F\}$;
\item $I_F := I \setminus t(F)$;
\item $t_F$ and $*_F$ are the restrictions of $t$ and $*$ to $X_F$ and $I_F$.
\end{itemize}

An incidence system $\Gamma$ is {\em residually connected} when each residue of rank at least two of $\Gamma$ has a connected incidence graph. It is called {\it thin} when every residue of rank one of $\Gamma$ contains exactly two elements.  Every thin connected rank 2 geometry is an $m$-gon for some $m \in \mathbb{N}\cup\{\infty\}$. The following result reduces the thinness test to residues of rank two.
\begin{lemma} \cite{BuekCohen} An incidence geometry of rank at least two is thin if and only if all of its rank two residues are
thin.
\end{lemma}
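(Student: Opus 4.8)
The plan is to reduce everything to a single structural identity about residues of residues: for a flag $F$ of $\Gamma$ and a flag $G$ of the residue $\Gamma_F$, one has $(\Gamma_F)_G = \Gamma_{F\cup G}$. First I would verify this by unwinding the definitions. An element $y$ lies in $(\Gamma_F)_G$ precisely when $y*F$, $y*G$, and $y\notin F\cup G$; since $G$ is a flag of $\Gamma_F$, the set $F\cup G$ is a flag of $\Gamma$, and the displayed condition says exactly that $y*(F\cup G)$ with $y\notin F\cup G$, i.e. $y\in X_{F\cup G}$. The type sets match as well, because $I_F\setminus t(G)=I\setminus t(F\cup G)$, and the restrictions of $*$ and $t$ agree. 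I would also record the standard fact (citing \cite{BuekCohen}) that a residue of an incidence geometry is again an incidence geometry, so that the notions ``rank two residue'' and ``thin'' are meaningful at every stage.

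With this identity in hand, both implications become short. For the forward direction, assume $\Gamma$ is thin and let $\Gamma_F$ be any rank two residue, so $F$ has corank $2$. A rank one residue of $\Gamma_F$ has the form $(\Gamma_F)_G=\Gamma_{F\cup G}$ for a corank one flag $G$ of $\Gamma_F$; but then $F\cup G$ is a corank one flag of $\Gamma$, so $\Gamma_{F\cup G}$ is a rank one residue of $\Gamma$ and hence contains exactly two elements by thinness of $\Gamma$. Thus every rank two residue is thin.

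For the converse, assume every rank two residue of $\Gamma$ is thin and let $\Gamma_F$ be an arbitrary rank one residue, so $F$ has corank $1$ and, since $\Gamma$ has rank at least two, $F$ is nonempty. Choosing any $x\in F$ and setting $F'=F\setminus\{x\}$, the flag $F'$ has corank $2$, so $\Gamma_{F'}$ is a rank two residue and is thin by hypothesis. Now $x$ is an element of $\Gamma_{F'}$, and the identity gives $\Gamma_F=\Gamma_{F'\cup\{x\}}=(\Gamma_{F'})_{\{x\}}$, which is a rank one residue of the thin rank two geometry $\Gamma_{F'}$; therefore $\Gamma_F$ has exactly two elements, as required.

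The only real work is the bookkeeping behind the residue-of-a-residue identity together with the accompanying fact that residues are geometries; once these are established, the thinness test falls out by restricting attention, in each direction, to a single element added to (or removed from) the defining flag. The main obstacle, such as it is, lies in being careful about types and coranks so that ``corank one'' and ``corank two'' flags correspond correctly between $\Gamma$ and its residues; beyond this there is no genuine combinatorial difficulty.
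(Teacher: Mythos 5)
The paper states this lemma as a citation from Buekenhout--Cohen and gives no proof of its own, so there is nothing internal to compare against; your argument is the standard one and it is correct. The residue-of-a-residue identity $(\Gamma_F)_G=\Gamma_{F\cup G}$ is verified properly, and both directions then follow by adding or removing a single element from the defining flag exactly as you describe.
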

A {\em hypertope} is a thin incidence geometry which is residually connected.

Let $\Gamma$ be a hypertope.  The \emph{Buekenhout diagram} of $\Gamma$ is a graph whose vertices are the elements of $I$ and with 
an edge $\{i,j\}$ with label $m$ whenever $m\neq 2$. A \emph{polytope} is a hypertope with linear Buekenhout diagram.

Let $\Gamma:=(X,*, t,I)$ be an incidence system.
An {\em automorphism} of $\Gamma$ is a mapping
$\alpha:(X,I)\rightarrow (X,I):(x,t(x)) \mapsto (\alpha(x),t(\alpha(x))$
where
\begin{itemize}
\item $\alpha$ is a bijection on $X$ inducing a bijection on $I$;
\item for each $x$, $y\in X$, $x*y$ if and only if $\alpha(x)*\alpha(y)$;
\item for each $x$, $y\in X$, $t(x)=t(y)$ if and only if $t(\alpha(x))=t(\alpha(y))$.
\end{itemize}
An automorphism $\alpha$ of $\Gamma$ is called {\it type preserving} when for each $x\in X$, $t(\alpha(x))=t(x)$.
An incidence system $\Gamma$ is {\em flag-transitive} if $Aut_I(\Gamma)$ is transitive on all flags of a given type $J$ for each type $J \subseteq I$.
Finally, an incidence system $\Gamma$ is {\em regular}  if $Aut_I(\Gamma)$ acts regularly on the chambers (i.e. the action is semi-regular and transitive).
A {\em regular hypertope} is a flag-transitive hypertope.

If $\Gamma$ is a regular hypertope of rank $r$ with type-set $I:= \{0, \ldots, r-1\}$, and $C$ is a chamber of $\Gamma$, the automorphism group $Aut_I(\Gamma)$ is generated by a set of \emph{distinguished generators} $\{\rho_0, \ldots, \rho_{r-1}\}$ such that $\rho_i$ maps $C$ to its $i$-adjacent chamber in $\Gamma$, that is the unique chamber $C_i$ of $\Gamma$ such that $C$ and $C_i$ differ only in their respective elements of type $i$.

Given an incidence system $\Gamma$ and a chamber $C$ of $\Gamma$, we may associate to the pair $(\Gamma,C)$ a pair consisting of a group $G$ and a set $\{G_i : i \in I\}$ of subgroups of $G$ where $G := Aut_I(\Gamma)$ and $G_i$ is the stabilizer in $G$ of the element of type $i$ in $C$.
The following proposition shows how to reverse this construction, that is starting from a group and some of its subgroups, construct an incidence system.

Observe that in this paper all C-groups we get give hypertopes as we will show in Section~\ref{proof}.
\begin{proposition}\cite{Tits56}\label{tits}
Let $n$ be a positive integer
and $I:= \{0,\ldots ,r-1\}$ a finite set.
Let $G$ be a group together with a family of subgroups ($G_i$)$_{i \in I}$, $X$ the set consisting of all cosets $G_ig$, $g \in G$, $i \in I$ and $t : X \rightarrow I$ defined by $t(G_ig) = i$.
Define an incidence relation $*$ on $X\times X$ by :
\begin{center}
$G_ig_1 * G_jg_2$ iff $G_ig_1 \cap G_jg_2$ is non-empty in $G$.
\end{center}
Then the 4-tuple $\Gamma := (X, *, t, I)$ is an incidence system having a chamber.
Moreover, the group $G$ acts by right multiplication as an automorphism group on $\Gamma$.
Finally, the group $G$ is transitive on the flags of rank less than 3.
\end{proposition}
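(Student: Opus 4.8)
The plan is to verify the four assertions in the order they are stated, working throughout with the convention that elements of $X$ are cosets \emph{tagged by their type}, so that a coset $G_ig$ is remembered together with its subscript $i$ and $t(G_ig)=i$ is well defined even if $G_ig$ happens to coincide, as a subset of $G$, with some $G_jg'$. With this convention in place, I would first check that $(X,*,t,I)$ is an incidence system. Reflexivity of $*$ is immediate, since $G_ig\cap G_ig = G_ig$ is non-empty; symmetry is immediate from the symmetry of set-theoretic intersection. The only axiom that needs a genuine argument is the requirement that incident elements of equal type coincide: if $G_ig_1 * G_ig_2$ then $G_ig_1\cap G_ig_2\neq\emptyset$, and since two right cosets of the \emph{same} subgroup $G_i$ are either equal or disjoint, this forces $G_ig_1 = G_ig_2$.

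For the existence of a chamber I would simply exhibit $\{G_01, G_11,\ldots,G_{r-1}1\}$, where $1$ denotes the identity of $G$. Because $1\in G_i$ for every $i$, the identity lies in each pairwise intersection $G_i1\cap G_j1 = G_i\cap G_j$, so these $r$ elements are pairwise incident and realise every type in $I$; this is a flag of type $I$, hence a chamber.

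For the action I would define, for each $h\in G$, the map $\phi_h\colon G_ig\mapsto G_igh$. Its well-definedness and bijectivity both follow from the fact that right multiplication by $h$ is a bijection of $G$ that permutes the right cosets of each fixed $G_i$; the map is type preserving by construction; and it preserves incidence because $(G_ig_1\cap G_jg_2)h = G_ig_1h\cap G_jg_2h$, so one of these intersections is non-empty exactly when the other is. Consequently each $\phi_h$ is a type-preserving automorphism, and $h\mapsto\phi_h$ is a homomorphism from $G$ into $\mathrm{Aut}_I(\Gamma)$, realising the claimed action by right multiplication.

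Finally, for transitivity on flags of rank less than $3$, the ranks $0$ and $1$ are routine: the empty flag is unique, and for a single type $\{i\}$ the element $h=g_1^{-1}g_2$ gives $\phi_h(G_ig_1)=G_ig_2$. The one place where a small idea is needed, and the step I expect to be the main (though modest) obstacle, is rank $2$. Here the key observation is a \emph{normal form}: any incident pair of type $\{i,j\}$ can be written as $\{G_ix, G_jx\}$ for a single common element $x$, since if $x\in G_ig_1\cap G_jg_2$ then $G_ig_1=G_ix$ and $G_jg_2=G_jx$. Given two incident pairs $\{G_ix,G_jx\}$ and $\{G_iy,G_jy\}$, the element $h=x^{-1}y$ then satisfies $\phi_h(G_ix)=G_iy$ and $\phi_h(G_jx)=G_jy$, so it carries the first flag to the second. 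I anticipate no serious difficulty overall, as the statement is classical and elementary; the only point deserving care is reducing an arbitrary incident pair to this common-element form before producing the translating group element.
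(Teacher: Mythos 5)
Your proof is correct. The paper offers no proof of this proposition at all---it is quoted verbatim from Tits' 1956 paper as a known result---so there is no in-paper argument to compare against; your write-up (tagging cosets by type so that $t$ is well defined, the identity-coset chamber $\{G_i1 : i \in I\}$, right translation $G_ig \mapsto G_igh$ as type-preserving automorphisms, and the common-representative normal form $\{G_ix, G_jx\}$ for an incident pair, which reduces rank-$2$ transitivity to the single translation $h = x^{-1}y$) is precisely the standard classical argument and is complete.
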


If $\Gamma:= \Gamma(G;(G_i)_{i\in I})$ is a regular hypertope, its distinguished generators are the generators of the subgroups $\cap_{j \in I\backslash\{i\}}G_j$.

\begin{theorem}\cite{FLW1}\label{cgroup}
Let $I:=\{0, \ldots, r-1\}$ and let $\Gamma:= \Gamma(G;(G_i)_{i\in I})$ be a regular hypertope of rank $r$.
 The pair $(G,S)$ where $S$ is the set of distinguished generators of $\Gamma$ is a C-group of rank $r$.
\end{theorem}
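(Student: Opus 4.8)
The plan is to verify the three defining conditions of a C-group for the pair $(G,S)$ with $S=\{\rho_0,\dots,\rho_{r-1}\}$: that each $\rho_i$ is an involution, that $S$ generates $G$, and that $S$ satisfies the intersection property~(\ref{ip}). The whole argument hinges on thinness, residual connectedness and flag-transitivity, and the key technical step is to identify each parabolic subgroup $\langle\rho_j : j\in J\rangle$ with the stabilizer of a flag of $\Gamma$.

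First I would record that, because $\Gamma$ is thin, every chamber $C$ has, for each type $i$, a unique $i$-adjacent chamber $C_i$, and that flag-transitivity together with thinness forces the action of $G$ on the set of chambers to be free: an automorphism fixing $C$ elementwise must, by uniqueness of $i$-adjacent chambers and injectivity, fix every chamber $i$-adjacent to $C$ as well, and then by connectivity of the chamber graph it fixes all elements, hence is the identity. Thus $G$ acts regularly on chambers, and the distinguished generator $\rho_i$, the unique automorphism sending $C$ to $C_i$, satisfies $\rho_i^2=1$ (since $(C_i)_i=C$) and $\rho_i\neq 1$, so each $\rho_i$ is an involution. Generation of $G$ by $S$ then follows from a standard orbit argument: the orbit of $C$ under $\langle S\rangle$ is closed under passing to $i$-adjacent chambers (because automorphisms preserve $i$-adjacency, so $C_ig=(Cg)_i$), hence equals the whole connected chamber graph; since $G$ is regular on chambers and $\langle S\rangle$ is already transitive, $\langle S\rangle=G$.

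The heart of the proof is the following claim, which I would establish for every $J\subseteq I$: writing $F_{I\setminus J}$ for the sub-flag of $C$ consisting of the elements whose type lies in $I\setminus J$, one has $\langle\rho_j : j\in J\rangle = G_{I\setminus J}$, where $G_{I\setminus J}:=\bigcap_{k\in I\setminus J}G_k$ is the stabilizer of $F_{I\setminus J}$. The inclusion $\subseteq$ is immediate since each $\rho_j$ with $j\in J$ fixes $F_{I\setminus J}$ elementwise. For the reverse inclusion I would pass to the residue $R:=\Gamma_{F_{I\setminus J}}$, which is again a thin, residually connected, flag-transitive geometry of rank $|J|$ with type-set $J$ (flag-transitivity passing to residues via $G_{F_{I\setminus J}}$), and whose chambers are exactly the chambers of $\Gamma$ containing $F_{I\setminus J}$. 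On $R$ the group $G_{I\setminus J}$ acts regularly on chambers (transitivity from flag-transitivity of the residue, freeness inherited from that of $G$ on chambers of $\Gamma$), while each $\rho_j$ ($j\in J$) acts as the distinguished generator of $R$ for the type $j$, since $j$-adjacency inside $R$ coincides with $j$-adjacency in $\Gamma$. Repeating the orbit-and-connectivity argument inside $R$ shows that $\langle\rho_j : j\in J\rangle$ is already transitive on the chambers of $R$; being contained in the chamber-regular group $G_{I\setminus J}$, it must equal it.

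Granting the claim, the intersection property drops out by a direct computation: for $J,K\subseteq I$,
\begin{equation*}
\langle\rho_j : j\in J\rangle\cap\langle\rho_k : k\in K\rangle = G_{I\setminus J}\cap G_{I\setminus K}=G_{(I\setminus J)\cup(I\setminus K)}=G_{I\setminus(J\cap K)}=\langle\rho_i : i\in J\cap K\rangle,
\end{equation*}
where the middle equalities use that $G_A\cap G_B=G_{A\cup B}$ for the flag stabilizers $G_A=\bigcap_{a\in A}G_a$, together with De Morgan's law. I expect the main obstacle to be the claim, and within it the careful verification that residues of a regular hypertope remain regular hypertopes on which the distinguished generators restrict correctly; once the interplay of thinness (freeness of the chamber action) and residual connectedness (transitivity via the chamber graph) is set up, each piece is routine, but getting the residue bookkeeping and the identification of adjacencies exactly right is where the work lies.
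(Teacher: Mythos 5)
Your proof is correct. Note that the paper itself gives no proof of this statement: it is quoted verbatim from the reference [FLW1], so there is no in-paper argument to compare against. Your argument — establishing regularity of the chamber action from thinness, residual connectedness (connectivity of the chamber graph) and flag-transitivity, identifying $\langle \rho_j : j\in J\rangle$ with the stabilizer $\bigcap_{k\in I\setminus J}G_k$ of the cotype-$J$ subflag of the base chamber by running the same orbit argument in the residue, and then deducing the intersection property from $G_A\cap G_B=G_{A\cup B}$ and De Morgan — is the standard proof and is essentially the one given in the cited reference.
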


Regular hypertopes with a linear Buekenhout diagram are in one-to-one correspondance with C-groups with a linear Coxeter diagram that are also called \emph{string C-groups}. 
Nevertheless from a C-group that is not string we may not get a regular hypertope. Some examples can be found in \cite{FLW1} as well as the following result. 

\begin{proposition}\cite{FLW1}\label{FTcgroup}
Let $(G,\{\rho_0, \ldots, \rho_{r-1}\})$ be a C-group of rank $r$ and let $\Gamma := \Gamma(G;(G_i)_{i\in I})$ with $G_i := \langle \rho_j | \rho_j \in S, j \in I\setminus \{i\} \rangle$ for all $i\in I:=\{0, \ldots, r-1\}$.
If $\Gamma$ is flag-transitive, then
$\Gamma$ is thin, residually connected and regular.
\end{proposition}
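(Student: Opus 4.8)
The plan is to establish the three conclusions in turn, in each case using the intersection property~(\ref{ip}) to evaluate the parabolic subgroups and flag-transitivity to reduce arbitrary flags to standard ones. I write $C := \{G_i\cdot 1 : i\in I\}$ for the standard chamber and, for $J\subseteq I$, $G_J := \bigcap_{j\in J} G_j$. Since $G_j = \langle \rho_k : k\in I\setminus\{j\}\rangle$, applying~(\ref{ip}) repeatedly gives $G_J = \langle \rho_k : k\in I\setminus J\rangle$, because $\bigcap_{j\in J}(I\setminus\{j\}) = I\setminus J$; this identity is the engine behind all three parts.

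Regularity is the quickest. By Proposition~\ref{tits}, $\Gamma$ is an incidence system admitting the chamber $C$ on which $G$ acts by right multiplication; flag-transitivity moreover makes every flag $G$-equivalent to a sub-flag of $C$ and hence contained in a chamber, so $\Gamma$ is a geometry. Flag-transitivity also includes transitivity on the chambers (the flags of type $I$), so it remains only to see that the action is semiregular: the stabiliser of $C$ equals $G_I = \langle\rho_k : k\in\emptyset\rangle = \{1\}$ by the identity above, and all chamber stabilisers are conjugate, so $G$ acts regularly on chambers.

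For thinness I would compute a single rank-one residue. By flag-transitivity every flag of type $I\setminus\{i\}$ is $G$-equivalent to the standard one $\Phi_i := C\setminus\{G_i\cdot 1\}$, whose element-wise stabiliser is $G_{I\setminus\{i\}} = \langle\rho_i\rangle$, of order $2$. The standard element $G_i\cdot 1$ lies in the residue of $\Phi_i$, and so does $G_i\rho_i$, since $\rho_i\in G_j$ for every $j\ne i$ forces $\rho_i\in G_i\rho_i\cap G_j$; moreover these are distinct because $\langle\rho_i\rangle\cap G_i = \{1\}$ shows $\rho_i\notin G_i$. Flag-transitivity makes $\langle\rho_i\rangle$ act transitively on this residue, and the stabiliser of $G_i\cdot 1$ inside it is again $\langle\rho_i\rangle\cap G_i = \{1\}$; hence the residue has exactly two elements. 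Every rank-one residue thus has size two, so $\Gamma$ is thin (equivalently one could invoke the cited lemma and check only rank-two residues).

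The part I expect to demand the most care is residual connectivity, where the inductive nature of C-groups does the work. I would first record the elementary fact that the incidence graph of a coset incidence system $\Gamma(H;(H_k)_{k\in K})$ is connected if and only if $H = \langle H_k : k\in K\rangle$, obtained by tracing neighbours from a base coset. By flag-transitivity, the residue of any flag of type $J$ is isomorphic to that of the standard flag $\{G_j\cdot 1 : j\in J\}$, and by \cite{BuekCohen} the latter is the coset incidence system $\Gamma(G_J;(G_J\cap G_i)_{i\in I\setminus J})$. The intersection property identifies this with the coset system of the C-group $(G_J,\{\rho_k : k\in I\setminus J\})$, since $G_J = \langle\rho_k : k\in I\setminus J\rangle$ and $G_J\cap G_i = \langle\rho_k : k\in I\setminus(J\cup\{i\})\rangle$. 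When this residue has rank at least two, that is $|I\setminus J|\ge 2$, any two of its generating involutions lie in distinct maximal parabolics, so these parabolics already generate $G_J$, and the elementary fact yields connectivity. As every residue of rank at least two arises in this way, $\Gamma$ is residually connected. The real difficulty is purely organisational: one must apply the intersection property at each level to pin down the parabolics of the residues and verify that flag-transitivity descends to them, so that the connectivity criterion applies throughout.
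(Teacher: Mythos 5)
The paper does not actually prove this proposition --- it is imported wholesale from \cite{FLW1} --- so your argument has to stand on its own, and on its own terms it is essentially correct and complete. The identity $\bigcap_{j\in J}G_j=\langle\rho_k : k\in I\setminus J\rangle$, obtained by iterating the intersection property, is exactly the right engine: it gives the trivial chamber stabiliser, the order-two stabiliser $\langle\rho_i\rangle$ of the cotype-$\{i\}$ flag whose orbit is the whole rank-one residue, and the generation statement $G_J=\langle G_J\cap G_i : i\in I\setminus J\rangle$ for $|I\setminus J|\geq 2$ that feeds the connectivity criterion. You are also right that identifying an arbitrary residue with the coset system of the stabilisers is the step that genuinely uses flag-transitivity, and the appeal to \cite{BuekCohen} there is the standard and correct move.

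The one place where a line is missing is regularity. The paper defines $\Gamma$ to be regular when $Aut_I(\Gamma)$ acts regularly on the chambers, whereas your computation $\bigcap_{i\in I}G_i=\{1\}$ only shows that the subgroup $G\leq Aut_I(\Gamma)$ acts regularly; a priori $Aut_I(\Gamma)$ could be strictly larger, in which case it would be transitive on chambers but not semiregular. The gap closes with the standard argument that in a thin, residually connected geometry any type-preserving automorphism fixing a chamber $C$ must also fix each $i$-adjacent chamber (it fixes the corank-one flag $C\setminus\{c_i\}$, hence permutes the two elements of that rank-one residue, one of which, $c_i$, it already fixes), and therefore fixes every chamber by connectedness and is the identity; consequently $Aut_I(\Gamma)=G$ and regularity follows from what you proved. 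This only forces a reordering --- thinness and residual connectivity must come before regularity rather than after --- and costs nothing else.
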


The following result gives a way to check whether or not $\Gamma$ is a flag-transitive geometry. See also Dehon \cite{Dehon94}.

\begin{theorem}\cite{BH91}\label{hermandft}
 Let $\mathcal{P}(I)$ be the set of all the subsets of $I$ and let $\alpha: \mathcal{P}(I) \setminus\{ \emptyset \} \to I$ be a function such that $\alpha (J) \in J$ for every $J \subset I$, $J \neq \emptyset$. The geometry $\Gamma := \Gamma(G;(G_i)_{i\in I})$ is flag-transitive if and only if, for every $J \subset I$ such that $\vert J \vert \geq 3$, we have \[ \bigcap_{j \in J - \alpha (J)} (G_j G_{\alpha(J)}) = \Bigg( \bigcap_{j \in J - \alpha (J)} G_j \Bigg) G_{\alpha(J)} \]
\end{theorem}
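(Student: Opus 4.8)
The plan is to translate everything into the coset language of $\Gamma:=\Gamma(G;(G_i)_{i\in I})$ and to characterise flag-transitivity one type at a time. A flag of type $J$ is a family $(G_jx_j)_{j\in J}$ of pairwise incident cosets, and by the definition of incidence in Proposition~\ref{tits} two cosets $G_jx_j$ and $G_kx_k$ meet precisely when $x_jx_k^{-1}\in G_jG_k$. Transitivity on flags of type $J$ then means that every such family lies in the $G$-orbit of the \emph{standard flag} $(G_j)_{j\in J}$, whose stabiliser is $\bigcap_{j\in J}G_j$. Since Proposition~\ref{tits} already guarantees that $G$ is transitive on flags of rank at most $2$, only the types $J$ with $|J|\geq 3$ carry information, which is exactly the range quantified in the statement; I would therefore argue by induction on $|J|$, writing $\alpha:=\alpha(J)$, $K:=J\setminus\{\alpha\}$ and $H_K:=\bigcap_{j\in K}G_j$ throughout.

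For the \emph{forward} implication I would assume flag-transitivity and deduce the factorisation for each such $J$. The inclusion $H_KG_\alpha\subseteq\bigcap_{j\in K}(G_jG_\alpha)$ is immediate. For the reverse inclusion, take $g\in\bigcap_{j\in K}(G_jG_\alpha)$ and write $g=a_jb_j$ with $a_j\in G_j$, $b_j\in G_\alpha$ for each $j\in K$. Then $a_j^{-1}\in G_\alpha g^{-1}\cap G_j$, so $G_\alpha g^{-1}$ is incident to every $G_j$ with $j\in K$, and $(G_\alpha g^{-1},(G_j)_{j\in K})$ is a genuine flag of type $J$. Flag-transitivity supplies $h\in G$ sending it to the standard flag $(G_\alpha,(G_j)_{j\in K})$; from $G_jh=G_j$ for $j\in K$ we get $h\in H_K$, and from $G_\alpha g^{-1}h=G_\alpha$ we get $g\in hG_\alpha\subseteq H_KG_\alpha$, as required.

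For the \emph{reverse} implication I would assume the factorisation for all $J$ with $|J|\geq 3$ and prove transitivity on flags of every type by induction on the size, the cases $|J|\leq 2$ being Proposition~\ref{tits}. Given a flag $(G_jx_j)_{j\in J}$ of type $J$, its sub-flag of type $K$ has smaller size, so by induction some $g\in G$ carries it to $(G_j)_{j\in K}$; after applying $g$ the flag becomes $(G_\alpha y,(G_j)_{j\in K})$ with $y:=x_\alpha g$. Incidence of $G_\alpha y$ with each $G_j$ ($j\in K$) forces $y\in\bigcap_{j\in K}G_\alpha G_j$. Using that subgroups are inverse-closed, $\bigcap_{j\in K}G_\alpha G_j=\bigl(\bigcap_{j\in K}G_jG_\alpha\bigr)^{-1}$, which by hypothesis equals $(H_KG_\alpha)^{-1}=G_\alpha H_K$. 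Hence $y=uv$ with $u\in G_\alpha$ and $v\in H_K$, and right multiplication by $v^{-1}$ fixes each $G_j$ ($j\in K$) while sending $G_\alpha y$ to $G_\alpha$; thus $gv^{-1}$ carries the original flag to the standard one, proving transitivity on type $J$.

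The bookkeeping of left versus right cosets, and in particular the identity $(G_jG_\alpha)^{-1}=G_\alpha G_j$ that converts the incidence constraint into the hypothesised factorisation, is where I expect the only real delicacy to lie; one must also check that the induction is coherent with the globally fixed choice function $\alpha$, which is harmless because the inductive hypothesis (transitivity on every type of strictly smaller size) makes no reference to $\alpha$, while the factorisation is assumed for all $J$ at once.
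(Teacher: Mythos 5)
The paper does not prove this theorem; it is quoted from Buekenhout--Hermand \cite{BH91}, with a proof referenced in \cite[Theorem 1.8.10]{BuekCohen}. Your argument is a correct, self-contained proof along exactly those standard lines: the translation of incidence into the condition $x_jx_k^{-1}\in G_jG_k$, the identification of type-$J$ flag-transitivity with every type-$J$ flag lying in the orbit of the standard flag $(G_j)_{j\in J}$, and the induction on $|J|$ with base case supplied by Proposition~\ref{tits} are all sound, and the coset bookkeeping (in particular $\bigl(\bigcap_{j}G_jG_\alpha\bigr)^{-1}=\bigcap_{j}G_\alpha G_j$ and the final correction by $v^{-1}\in H_K$) checks out. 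The only point worth making explicit is one of convention: the paper defines flag-transitivity via the full group $Aut_I(\Gamma)$, whereas what you prove (and what the theorem of \cite{BH91} asserts) is transitivity of $G$ itself acting by right multiplication; this is the standard meaning of ``flag-transitive'' for a coset geometry $\Gamma(G;(G_i)_{i\in I})$, and it is the version actually used elsewhere in the paper, so your reading is the right one, but you should state it.
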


A proof of this result is also available in ~\cite[Theorem 1.8.10]{BuekCohen}.
We prefer to use the following result to check flag-transitivity as it is much easier to apply in our case.
\begin{theorem}\label{FTlee2}\cite{FTLee}
 Let $\Gamma(G,\{G_0,\ldots,G_{r-1}\})$ be a flag-transitive coset geometry of rank $r$, let $H$ be a subgroup of $G$ and let $\Gamma'(H,\{G_0 \cap H, \ldots,$ $G_{r-1} \cap H\})$ be a flag-transitive geometry. Then the incidence systems $\Gamma_{(ij)}(G,$ $\{G_i,G_j,H\})$ are flag-transitive geometries $\forall ~ 0\leq i,j \leq r-1$, $i \neq j$ if and only if  the incidence system $\Gamma''(G,\{G_0,\ldots,G_{r-1},H\})$ is a flag-transitive geometry.
\end{theorem}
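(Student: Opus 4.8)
The plan is to run everything through the Buekenhout--Hermand criterion (Theorem~\ref{hermandft}). First I would fold $H$ into the indexing by writing $H =: G_r$, so that $\Gamma''$ becomes the coset geometry $\Gamma(G;(G_i)_{i\in I''})$ with type set $I'' := \{0,\dots,r-1,r\}$. With this convention $\Gamma$ is the truncation of $\Gamma''$ to $I := \{0,\dots,r-1\}$, while the hypothesis geometry $\Gamma'$ is exactly the residue in $\Gamma''$ of the element $G_r=H$, realised as the coset geometry $(H;(G_j\cap H)_{j\in I})$. The whole proof then amounts to checking the BH91 equations for $\Gamma''$ and bookkeeping which hypothesis supplies each one.

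The direction ``$\Gamma''$ flag-transitive $\Rightarrow$ each $\Gamma_{(ij)}$ flag-transitive'' I would dispose of by truncation. Each $\Gamma_{(ij)}$ is the truncation of $\Gamma''$ to the type set $\{i,j,r\}$, and the BH91 equations indexed by the subsets of $\{i,j,r\}$ form a subfamily of those indexed by the subsets of $I''$; since flag-transitivity of $\Gamma''$ makes all of the latter hold, the former hold too. Moreover a flag-transitive coset geometry is automatically a geometry, because every flag is mapped by some element of $G$ onto a subflag of the base chamber and hence extends to a chamber; so each $\Gamma_{(ij)}$ is indeed a flag-transitive geometry.

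The substantive direction is the converse. I would apply Theorem~\ref{hermandft} to $\Gamma''$ with the choice function $\alpha$ that sends every $J\ni r$ to $r$, and every $J\subseteq I$ to the value prescribed by the assumed flag-transitivity of $\Gamma$. For $J\subseteq I$ the resulting equation is literally one of the equations guaranteed by flag-transitivity of $\Gamma$, so it holds. For $J=K\cup\{r\}$ with $K\subseteq I$ and $|K|\ge 2$, the choice $\alpha(J)=r$ turns the BH91 equation into
\[
\bigcap_{j\in K}(G_jH) \;=\; \Big(\bigcap_{j\in K}G_j\Big)H. \qquad (\ast_K)
\]
The inclusion $\supseteq$ is immediate, so everything reduces to the inclusion $\subseteq$ in $(\ast_K)$, which I would prove uniformly in $K$ (no induction needed). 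Take $g$ in the left-hand side. For each $j\in K$ the relation $g\in G_jH$ yields $h_j\in H$ with $G_jg=G_jh_j$. For a single pair $\{i,j\}\subseteq K$, flag-transitivity of $\Gamma_{(ij)}$ is exactly $(G_iH)\cap(G_jH)=(G_i\cap G_j)H$, which produces $h_{ij}\in H$ with $G_ih_{ij}=G_ig$ and $G_jh_{ij}=G_jg$; this places $h_{ij}$ in $(G_i\cap H)h_i\cap(G_j\cap H)h_j$, showing that the cosets $(G_j\cap H)h_j$ are pairwise incident \emph{inside $\Gamma'$} and hence form a genuine flag of type $K$ in $\Gamma'$. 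Flag-transitivity of $\Gamma'$ then supplies a single $h\in H$ carrying the base flag $\{G_j\cap H:j\in K\}$ onto this flag, whence $G_jh=G_jg$ for every $j\in K$ at once; thus $gh^{-1}\in\bigcap_{j\in K}G_j$ and $g\in(\bigcap_{j\in K}G_j)H$, proving $(\ast_K)$.

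The delicate point, and the reason all three hypotheses are genuinely needed, is the last step above. Incidence in the residue of $H$ computed inside $\Gamma''$ (a common element in $G$) is a priori \emph{coarser} than incidence in $\Gamma'$ (a common element in $H$), and the rank-$3$ geometries $\Gamma_{(ij)}$ are precisely what forces these two notions to agree on pairs; without them the cosets $(G_j\cap H)h_j$ need not even be pairwise $\Gamma'$-incident. Upgrading ``pairwise $\Gamma'$-incident'' to ``one $h\in H$ works simultaneously for all of $K$'' is not a formal consequence of the pairwise statements, and this is exactly where flag-transitivity of the residue $\Gamma'$ is indispensable. I expect this passage from local (rank $2$ and rank $3$) compatibility to global compatibility to be the only real obstacle; the truncation argument, the reduction via the choice function $\alpha$, and the trivial inclusions are all routine.
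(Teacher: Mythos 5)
The paper itself offers no proof of this statement: it is imported verbatim from the preprint \cite{FTLee}, so there is no internal argument to compare yours against. Judged on its own merits, your proof is correct and complete, and it follows the route the paper visibly sets up (Theorem~\ref{hermandft}). The forward direction by truncation is routine. In the converse, your reduction via the choice function sending every type set containing $H$ to $H$ correctly isolates the only nontrivial identities, namely $\bigcap_{j\in K}(G_jH)=\bigl(\bigcap_{j\in K}G_j\bigr)H$ for $K\subseteq\{0,\ldots,r-1\}$ with $|K|\geq 2$, and your verification of these is sound: the flag-transitivity of the rank-$3$ systems $\Gamma_{(ij)}$ gives the case $|K|=2$ and, as you rightly emphasise, also forces the cosets $(G_j\cap H)h_j$ to meet pairwise \emph{inside $H$} rather than merely inside $G$, which is exactly what entitles you to invoke flag-transitivity of $\Gamma'$ to produce a single $h\in H$ lying in all of them, whence $gh^{-1}\in\bigcap_{j\in K}G_j$. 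Two conventions you rely on are standard in this literature but worth stating explicitly: ``flag-transitive'' for a coset geometry means transitivity of the defining group acting by right multiplication (otherwise transitivity of $Aut(\Gamma')$ on type-$K$ flags would not hand you an element of $H$), and the Buekenhout--Hermand equations characterise being a flag-transitive \emph{geometry}, so verifying them simultaneously disposes of the geometry property for $\Gamma''$. Neither point is a gap.
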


\subsection{Permutation representation graphs}
In what follows, $\Gamma:= (G, \{\rho_0,\ldots,\rho_{r-1}\})$ is a group generated by involutions, $G$ is of permutation degree $n$
 and the maximal parabolic subgroups of $\Gamma$ are the subgroups $\Gamma_i:=\langle \rho_j\,|\, j\neq i\rangle$, $i=0,\ldots, r-1$.
Let $J\subseteq I:= \{0,\ldots,r-1\}$, we define $\Gamma_J:= \langle \rho_i: i \in I\setminus J\rangle$.
If $J=\{i_1,\ldots,i_k\}$ we write 
$\Gamma_{i_1,\ldots,i_k}$, omitting the set brackets. 

The \emph{permutation representation graph} $\mathcal{G}$ of $\Gamma$ is the graph with $n$ vertices and an $i$-edge $\{a,b\}$ whenever $a=\rho_i b$. We denote by $\mathcal{G}_I$ the subgraph of $\mathcal{G}$ with $n$ vertices and  with the edges of $\mathcal{G}$ that have labels in $I$. As  $\Gamma$ is generated by involutions, $\mathcal{G}_{\{i\}}$ is a matching.

A \emph{fracture graph} $\mathcal{F}$ of $\Gamma$ is a subgraph of $\mathcal{G}$ containing all vertices of $\mathcal{G}$ and  one edge of each label, chosen in such a way that each $i$-edge joins two vertices $a_i$ and $b_i$ that are in distinct $\Gamma_i$-orbits. Fracture graphs play an important rule when every $\Gamma_i$ is intransitive as in that case a fracture graph has exactly $r$ edges and is a forest with $c$ components when $r=n-c$ (see \cite{extension}). When we want to represent a fracture graph $\mathcal{F}$ it is  convenient to distinguish edges in $\mathcal{F}$ from edges that are not in $\mathcal{F}$. For that reason dashed edges will be used for edges in  $\mathcal{G}\setminus \mathcal{F}$. 

\section{C-groups of rank $n-1$ for $S_n$}\label{n-1}

In \cite{CLP} the authors give an enumeration of the inductively minimal geometries of any rank by exhibiting a
correspondence between the inductively minimal geometries of rank $n-1$ and the trees with $n$
vertices. More precisely, the line graph of a tree is the diagram of a inductively minimal geometry and vice-versa.
We recall that the\emph{ line graph} of a given graph $\mathcal{G}$ is defined as follows:
\begin{itemize}
\item the vertices are the edges of $\mathcal{G}$;
\item two vertices of the line graph are adjacent if they have a common vertex in $\mathcal{G}$.
\end{itemize}

In \cite{CaCa:ta} the authors answer the 3rd question of Section 3 of \cite{CLP}, showing that the tree corresponding to a  inductively minimal geometry is just the permutation representation of an independent set of size $n-1$ for $S_n$, when $n\geq 7$. We summarised their result in Theorem~\ref{CaCa}.

We note that the Coxeter diagram $\Delta$ of a  inductively minimal geometry satisfies the following three properties:
\begin{itemize}
\item  $\Delta$ has no chordless cycle of length greater then 3;
\item every edge of $\Delta$ is in a unique maximal clique;
\item each vertex of $\Delta$ is either in one or in two maximal cliques.
\end{itemize}
A graph satisfying these three conditions is called an \emph{IMG diagram} in~\cite{CLP}. The Coxeter diagram of a C-group $\Gamma$ of rank $n-1$  is an IMG diagram. In the following proposition we prove that each triangle of the Coxeter diagram  of $\Gamma$ corresponds to the finite geometric group $[111]^2$.

\begin{proposition}\label{preS}
For $n\geq 7$,  $\Gamma:=(S_n,\{\rho_0,\ldots, \rho_{n-2}\})$ is a C-group of rank $n-1$ if and only if $\Gamma$ is abstractly defined by the relations corresponding to its Coxeter diagram, that is an IMG diagram, and a relation $(\rho_i\rho_j\rho_i\rho_k)^2=1$ whenever $\{\rho_i,\rho_j,\rho_k\}$ is a triangle of the Coxeter diagram. 
\end{proposition}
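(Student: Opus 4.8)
The plan is to prove both that the listed relations hold in $S_n$ and that they suffice to present it, the latter being the real content. For the forward implication, assume $\Gamma$ is a C-group of rank $n-1$. By Theorem~\ref{CaCa} its permutation representation graph is a tree $\mathcal{T}$ on $n$ vertices, the $\rho_i$ are the transpositions indexed by the edges of $\mathcal{T}$, and the Coxeter diagram $\Delta$ is the line graph $L(\mathcal{T})$, an IMG diagram. First I would record that every relation of the claimed presentation holds in $S_n$. The Coxeter relations are immediate from the edge labels of $L(\mathcal{T})$: adjacent edges of $\mathcal{T}$ share a vertex, so their transpositions multiply to a $3$-cycle, while non-adjacent edges give commuting disjoint transpositions. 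For a triangle $\{\rho_i,\rho_j,\rho_k\}$ of $\Delta$ the three corresponding edges of $\mathcal{T}$ pairwise meet, and since $\mathcal{T}$ has no cycle they share a common vertex $v$; writing them $(v\,a),(v\,b),(v\,c)$ one computes $\rho_i\rho_j\rho_i\rho_k=(a\,b)(v\,c)$, a product of two disjoint transpositions, whence $(\rho_i\rho_j\rho_i\rho_k)^2=1$. Hence there is an epimorphism $\pi\colon W\twoheadrightarrow S_n$, where $W$ denotes the abstractly presented group.

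The heart of the argument is to show $|W|\le n!$, so that $\pi$ is an isomorphism and the presentation is complete. I would induct on $n$, the base case $n=7$ being a finite verification (the essential atomic fact is that the triangle group is $[111]^2\cong S_4$, with the remaining small diagrams checked directly). For the inductive step, choose a leaf $w$ of $\mathcal{T}$ with pendant edge $e=\{u,w\}$ corresponding to a generator $\rho$. Deleting $\rho$ removes the vertex $e$ from $L(\mathcal{T})$, and since $L(\mathcal{T})-e=L(\mathcal{T}-w)$ with $\mathcal{T}-w$ a tree on $n-1$ vertices, the restricted relations are exactly the claimed presentation for $S_{n-1}$. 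Applying the induction hypothesis to $\mathcal{T}-w$, the subgroup $W':=\langle\rho_k:\rho_k\neq\rho\rangle$ is a quotient of the abstractly presented $S_{n-1}$ and surjects onto $\pi(W')\cong S_{n-1}$, so $|W'|=(n-1)!$ and $W'\cong S_{n-1}$. It then suffices to prove $[W:W']\le n$.

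To bound the index I would exhibit $n$ coset representatives indexed by the vertices of $\mathcal{T}$: for each $v$ let $t_v$ be the product of the generators along the unique path from $w$ to $v$, so that $t_w=1$ and, under $\pi$, $t_v$ sends $w$ to $v$. The goal is to show that $\{W't_v : v\in V(\mathcal{T})\}$ is closed under right multiplication by every generator; since $W$ is generated by the $\rho_i$ and the set contains $W'=W't_w$, this forces it to be the full set of right cosets, giving $[W:W']\le n$ and hence $|W|\le n\cdot(n-1)!=n!$. For a generator $\rho_i$ corresponding to an edge $f$, one checks $W't_v\rho_i=W't_{v'}$ with $v'$ the image of $v$ under the transposition $f$, splitting into cases according to the position of $f$ relative to the path from $w$ to $v$: when $f$ is terminal on or disjoint from that path the identity is immediate, and otherwise $\rho_i$ must be transported along the path, commuting past disjoint edges and interacting at the unique vertex where $f$ meets the path.

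The step I expect to be the main obstacle is precisely this last verification: proving that each coset identity $t_v\rho_i t_{v'}^{-1}\in W'$ is a consequence of the presented relations alone, and not merely of the corresponding equality in $S_n$. The delicate point is the behaviour at a branch vertex of $\mathcal{T}$, where three edges meet; there the bare Coxeter relations yield only the infinite affine group $\tilde{A}_2$, and it is exactly the triangle relation $(\rho_i\rho_j\rho_i\rho_k)^2=1$ that forces the correct finite identification and allows $\rho_i$ to be pushed past the turn in the path. Once $|W|=n!$ is established, $\pi$ is an isomorphism and the presentation is complete. Finally, for the reverse implication one observes that a group abstractly defined by these relations has an IMG Coxeter diagram, hence by the correspondence of Theorem~\ref{CaCa} its generators realise the transpositions of a tree on $n$ vertices; the identification of $W$ with $S_n$ then yields the intersection property, so $\Gamma$ is indeed a C-group of rank $n-1$.
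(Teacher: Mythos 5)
Your proposal is correct in substance but follows a genuinely different route from the paper. The paper also first verifies the triangle relation in $S_n$ (via the claw subgraph, computing $\rho_i\rho_j\rho_i\rho_k=(1\,3)(2\,4)$, exactly your $(a\,b)(v\,c)$), but for sufficiency it inducts on the number $t$ of triangles in the Coxeter diagram rather than on $n$: it performs a surgery on the tree, detaching a pendant edge from a branch vertex $w$ and reattaching it at a distant leaf $z$ to obtain a tree $\mathcal{D}$ with fewer triangles, invokes Theorem~\ref{CaCa} to see that $\mathcal{D}$ again yields a rank-$(n-1)$ C-group for $S_n$, and then rewrites the presentation of that group under the substitution $\rho_1=\alpha_1^{\alpha_2\cdots\alpha_l}$, checking case by case that every relator converts into a diagram relator or a triangle relator of the original tree; the base case is the triangle-free (linear) tree, i.e.\ the Coxeter group $A_{n-1}$. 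You instead induct on $n$ via a leaf, identify $W'\cong S_{n-1}$, and bound $[W:W']\le n$ by a Schreier-type coset closure with representatives $t_v$ along paths from the leaf. Your flagged ``main obstacle'' does in fact close: if $f$ is a path edge $e_i$ with $i<m$, the braid relation gives $t_v\rho_{e_i}=\rho_{e_{i+1}}t_v$; if $f$ is pendant at an interior path vertex $v_i$, then $\{e_i,e_{i+1},f\}$ share the vertex $v_i$, so they form a triangle of $L(\mathcal{T})$ and the relation $(\rho_{e_{i+1}}\rho_f\rho_{e_{i+1}}\rho_{e_i})^2=1$ says precisely that $\rho_{e_{i+1}}\rho_f\rho_{e_{i+1}}$ commutes with $\rho_{e_i}$ (and it commutes with all earlier, disjoint, path edges by the Coxeter relations), giving $t_v\rho_ft_v^{-1}\in W'$; the remaining cases are the trivial ones you list. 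Your approach buys a self-contained, classical index argument (in the spirit of Moore's presentation) whose base case can even be pushed below $n=7$ since only the presentation claim is needed inductively, whereas the paper's tree surgery keeps all work inside rank $n-1$ for the fixed $S_n$ and reduces everything to the known Coxeter presentation of $A_{n-1}$ at the cost of a longer relator-rewriting case analysis. Both proofs use the triangle relation at exactly the same structural point, namely to control conjugation past a branch vertex.
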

 \begin{proof}
 Suppose that $\{\rho_i,\rho_j,\rho_k\}$ is a triangle of the Coxeter Diagram of $\Gamma$. Then $\mathcal{G}_{\{i,j,k\}}$ is as follows. 
$$ \xymatrix@-1pc{& *+[o][F]{3} &\\
 *+[o][F]{1} \ar@{-}[r]_i  & *+[o][F]{2}  \ar@{-}[r]_k\ar@{-}[u]_j&*+[o][F]{4}} $$
For the above numbering of the vertices we have $\rho_i\rho_j\rho_i\rho_k=(1\,3)\,(2\,4)$, therefore $(\rho_i\rho_j\rho_i\rho_k)^2=1$.

Now let us prove that the relations of the Coxeter diagram plus the relations of the form $(\rho_i\rho_j\rho_i\rho_k)^2=1$ corresponding to a triangle of the Coxeter Diagram, are sufficient to give a  presentation of $\Gamma$. 

First when the Coxeter diagram of $\Gamma$ has no triangles, $\Gamma$ is the finite Coxeter group $A_{n-1}$, thus in that case the Coxeter diagram gives all the relators for a presentation of $\Gamma$. We now proceed by induction,  assuming that the proposition holds whenever the Coxeter diagram has at most $t$ triangles. 

Suppose that the Coxeter diagram of $\Gamma$ has exactly $t$ triangles. 
Pick any vertex $v$ of $\mathcal{G}$ of degree one and let $w$ be  the vertex of degree at least three closer to $v$. Now let $z$ be another vertex of degree one. Consider the labelling of the edges as in the following figure. 
$$ \xymatrix@-1pc{&&&& *+[o][F]{} &\\
*+[o][F]{z} \ar@{-}[r]^l&*+[o][F]{} \ar@{.}[r]& *+[o][F]{} \ar@{-}[r]^3&*+[o][F]{}  \ar@{-}[r]^2 & *+[o][F]{w}  \ar@{-}[r]^1\ar@{-}[u]&*+[o][F]{y} \ar@{-}[r]^{0}&*+[o][F]{} \ar@{.}[r]&*+[o][F]{v}} $$
Removing the $1$-edge $\{w,y\}$ of $\mathcal{G}$ and replacing it by the $1$-edge $\{y,z\}$ we obtain another graph $\mathcal{D}$ that is also a tree . 
$$ \xymatrix@-1pc{&&&& *+[o][F]{} &\\
*+[o][F]{z}  \ar@{-}@/_2pc/[rrrrr]_1\ar@{-}[r]^l&*+[o][F]{} \ar@{.}[r]& *+[o][F]{} \ar@{-}[r]^3&*+[o][F]{}  \ar@{-}[r]^2 & *+[o][F]{w}  \ar@{-}[u]&*+[o][F]{y} \ar@{-}[r]^0&*+[o][F]{} \ar@{.}[r]&*+[o][F]{v}} $$
By Theorem~\ref{CaCa}, $\mathcal{D}$ is the permutation representation graph of a C-group $\Delta$ of rank $n-1$ for $S_n$. 
By construction the Coxeter diagram of  $\Delta$ has less triangles than the Coxeter diagram of $\Gamma$. Indeed we reduced the degree of $w$ increasing the degree of a vertex of degree one, thus we did not create another triangle, as only vertices of degree at least three correspond to triangles of the Coxeter diagram.

Let us denote by $\{\alpha_0,\ldots,\alpha_{n-2}\}$  the generating set of $\Delta$. By induction apart from the relations given by the Coxeter diagram of $\Delta$, that is the line graph  of $\mathcal{D}$  following  Theorem~\ref{CaCa},  we have the relations $(\alpha_i\alpha_j\alpha_i\alpha_k)^2=1$ for each triangle  $\{\alpha_i,\alpha_j,\alpha_k\}$ of the Coxeter Diagram.

In what follows we rewrite the relations of the presentation of $\Delta$ in terms of the $\rho_i$'s to get a presentation of $\Gamma$.
 According to the way the permutation representation of $\Delta$ was obtained from the permutation representation of $\Gamma$ we have that $\rho_i=\alpha_i$ for $i\neq 1$ and $\rho_1=\alpha_1\,^{\alpha_2\ldots\alpha_{l}}$. Using this substitution, all the relations not involving $\alpha_1$ give either relations of the Coxeter diagram of $\Delta$ or relations of the form $(\rho_i\rho_j\rho_i\rho_k)^2=1$ whenever $\{\rho_i,\rho_j,\rho_k\}$ is a triangle of the Coxeter diagram of $\Delta$ (and of $\Gamma$).
Therefore we need to consider only the relations of $\Delta$ involving $\alpha_1$.

First if $i$ is such that the  $i$-edge of $\mathcal{D}$ is not incident to a vertex of the path between $w$ and $z$, then $\alpha_i $ commutes with $h=\alpha_2\ldots\alpha_{l}$, hence
 $\rho_1\rho_i=h^{-1}\alpha_1h\alpha_i=(\alpha_1\alpha_i)^h$. Therefore  $\alpha_1\alpha_i$ and $\rho_1\rho_i$ have the same order.

Now suppose that $i$ is the label of an edge $e$ incident to a vertex of the path from $w$ to $z $. We deal with the following cases separately: (1) $e$ is incident to $w$ and $i\neq l$, (2) $e$ is an edge of the path and is not incident to $w$ and $i\in\{2,\ldots, l-1\}$, (3) $e$ is an edge of the path and $i=l$ and (4) $i\notin\{1,\ldots,l-1\}$ but is a label of  an edge incident to the path from $w$ to $z $. Let $g_i:=\rho_i\ldots\rho_{l}$ for $i\in\{2,\ldots,l-1\}$, $g_l:=\rho_l $ and $g_{l+1}:=1$.

(1) In this case $(\alpha_1\alpha_i)^2=1$. We have $\alpha_1\alpha_i=g_3^{-1}\rho_2\rho_1\rho_2g_3\rho_i=g_3^{-1}\rho_2\rho_1\rho_2\rho_i g_3$. Thus $(\alpha_1\alpha_i)^2=1 \Leftrightarrow (\rho_2\rho_1\rho_2\rho_i)^2=1$. In this case we get the relation corresponding to the triangles $\{\rho_1,\rho_2,\rho_i\}$ of the Coxeter diagram of $\Gamma$.

(2) Here $(\alpha_1\alpha_i)^2=1$ and $\alpha_1\alpha_i=g_2^{-1}\rho_1g_2\rho_i=g_2^{-1}\rho_1\rho_2\ldots \rho_{i-1}(\rho_i\rho_{i+1}\rho_i)g_{i+2}=g_2^{-1}\rho_1\rho_2\ldots \rho_{i-1}(\rho_{i+1}\rho_i\rho_{i+1})g_{i+2}=g_2^{-1}\rho_1\rho_{i+1}\rho_2\ldots\rho_{i-1} \rho_i\rho_{i+1}g_{i+2}=(\rho_1\rho_{i+1})^{g_2}$. Thus $(\alpha_1\alpha_i)^2=1\Leftrightarrow (\rho_1\rho_{i+1})^2=1$, $i\in\{2,\ldots, l-1\}\Leftrightarrow (\rho_1\rho_i)^2=1$, $i\in\{3,\ldots, l\}$. In this case we obtain a relation implicit in the Coxeter diagram of $\Gamma$.

(3) Let first  $l\neq 2$. We have $\alpha_1=\rho_l\,^{\rho_{l-1}\ldots\rho_2\rho_1}$ 
Hence $\alpha_1\alpha_l=\rho_l\,^{\rho_{l-1}\ldots\rho_2\rho_1}\rho_l=\rho_1\rho_2\ldots\rho_{l-2}(\rho_{l-1}\rho_l\rho_{l-1}\rho_l)\rho_{l-2}\ldots\rho_2\rho_1=(\rho_l\rho_{l-1})^{\rho_{l-2}\ldots\rho_2\rho_1}$ thus $(\alpha_1\alpha_l)^3=1\Leftrightarrow (\rho_l\rho_{l-1})^3=1$. When $l= 2$, $\alpha_1=\rho_1^{\rho_2}=\rho_2^{\rho_1}$ hence $(\rho_1\rho_2)^3=1$. In any case we obtain a relation implicit in the Coxeter diagram of $\Gamma$. 

(4)  Suppose that $i$ is incident to the $j$-edge and to the $(j-1)$-edge for some $j\in \{3,\ldots,l\}$. 
$$ \xymatrix@-1pc{&&&*+[o][F]{} \ar@{-}[d]_i&&\\
*+[o][F]{z} \ar@{-}[r]_l&*+[o][F]{} \ar@{.}[r]&*+[o][F]{} \ar@{-}[r]_j&*+[o][F]{}\ar@{-}[r]_{j-1}&*+[o][F]{} \ar@{.}[r] &*+[o][F]{} \ar@{-}[r]_3&*+[o][F]{}  \ar@{-}[r]_2 & *+[o][F]{w}  } $$
In this case we have that $(\alpha_{j-1}\alpha_j\alpha_i\alpha_j)^2=1$  thus, 
\[\begin{array}{rl}
\alpha_1\alpha_i&=g_2^{-1}\rho_1g_2\rho_i=g_2^{-1}\rho_1\rho_2\ldots\rho_{j-1}\rho_j\rho_ig_{j+1}=\\
&=g_2^{-1}\rho_1\rho_2\ldots\rho_{j-2}(\rho_{j-1}\rho_j\rho_i\rho_j)g_j=\\
&=g_2^{-1}\rho_1\rho_2\ldots\rho_{j-2}(\rho_j\rho_i\rho_j\rho_{j-1})g_j=\\
&=g_2^{-1}\rho_1\rho_j\rho_i\rho_jg_2=(\rho_1\rho_i)^{\rho_jg_2}.
\end{array}\]
Hence $(\alpha_1\alpha_i)^2=1\Leftrightarrow (\rho_1\rho_i)^2=1$. In this case we obtain a relation implicit in the Coxeter diagram of $\Gamma$.

This proves that each relation of $\Delta$ is either converted in a relation implicit in the Coxeter diagram of $\Delta$ or into a relation of the form $(\rho_i\rho_j\rho_i\rho_k)^2=1$, and the latest happens when $\{\rho_i,\rho_j,\rho_k\}$ is a triangle of the Coxeter diagram of $\Gamma$.
\end{proof}

\section{Bounding the C-rank of a transitive maximal parabolic subgroup of a C-group for $S_n$}\label{transitive}

In this section we prove that if $\Gamma:=(S_n,\{\rho_0,\ldots, \rho_{r-1}\})$ is a C-group of rank $r$ and  $\Gamma_i$ is transitive for some $i\in\{0,\ldots,r-1\}$, then $r\leq n-3$ when $n\geq 9$. We first deal with the case where $\Gamma_i$ is a primitive group of degree $n$ for some $i\in\{0,\ldots,r-1\}$. Let us recall the  following result that  gives a bound for the order of a primitive group of given degree.

\pagebreak
\begin{theorem}\cite{Mar2002}\label{maroti}
Let $G$ be a primitive group of degree $n$ which is not $S_n$ nor $A_n$. Then one of the following possibilities occurs:
\begin{enumerate}
\item For some integers $m, k, l$ we have $n =(^m_k)^l$, and $G$ is a subgroup
of $S_m\wr S_l$, where $S_m$ is acting on $k$-subsets of $\{1,\ldots, m\}$;
\item $G$ is $M_{11}, M_{12}, M_{23}$ or $M_{24}$ in its natural $4$-transitive action;
\item $\displaystyle |G|\leq n.\prod_{i=0}^{\lfloor\log _2 n-1\rfloor}(n-2^i)$
\end{enumerate}
\end{theorem}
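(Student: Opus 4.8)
The plan is to treat this as Mar\'oti's order bound, which refines the classical estimates of Bochert, Praeger--Saxl and Babai, and to prove it by reducing to the \emph{basic} types of primitive groups via the O'Nan--Scott theorem, bounding each type, and then checking that the extremal configurations are exactly the three families listed. The target quantity is best understood through its extremal case: when $n=2^d$ one has
\[
n\prod_{i=0}^{\lfloor\log_2 n-1\rfloor}(n-2^i)=2^d\prod_{i=0}^{d-1}(2^d-2^i)=|\mathrm{AGL}(d,2)|,
\]
so the bound (3) is sharp and is attained on the affine type; the proof must show every other primitive group either falls into families (1) or (2), or is strictly smaller.

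First I would fix notation, writing $G$ primitive on a set $\Omega$ of size $n$ with socle $N$, and split according to the O'Nan--Scott classification into affine, almost simple, (simple) diagonal, product action, and twisted wreath types. For the \textbf{affine type} one has $n=p^d$, $N$ elementary abelian of order $p^d$, and a point stabiliser $G_0\le\mathrm{GL}(d,p)$, so $|G|\le p^d|\mathrm{GL}(d,p)|=n\prod_{i=0}^{d-1}(p^d-p^i)$; a direct comparison of this quantity with the target product (using $p\ge 2$) yields (3), with equality exactly when $p=2$, giving the extremal $\mathrm{AGL}(d,2)$.

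The heart of the argument is the \textbf{almost simple type}, where $T\trianglelefteq G\le\mathrm{Aut}(T)$ for a nonabelian simple $T$ acting primitively of degree $n=|T:T_\omega|$; here the classification of finite simple groups is unavoidable and one proceeds family by family. For $T=A_m$ the primitive actions of largest degree relative to $|\mathrm{Aut}(T)|$ are the actions on $k$-subsets, of degree $\binom{m}{k}$, which is precisely family (1) with $l=1$, while all remaining $A_m$-actions satisfy (3). For the sporadic groups a finite computation isolates the four Mathieu groups in their natural $4$-transitive actions as family (2), every other sporadic primitive action obeying (3). For the groups of Lie type (classical and exceptional) one bounds $|\mathrm{Aut}(T)|$ against the minimal faithful permutation degree and checks that (3) always holds; this is the most laborious step and where the sharp constant must be verified.

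Finally the \textbf{product action type} is handled by an inductive, multiplicative argument: $G\le H\wr S_l$ acts on $\Delta^l$ with $n=|\Delta|^l$ and $H$ primitive of almost simple or diagonal type on $\Delta$, so the bound for $H$ on $\Delta$ raises to the power $l$ (times $|S_l|$), and the only family for which the extremal subset action survives is $S_m\wr S_l$ with $S_m$ acting on $k$-subsets, i.e.\ family (1). The \textbf{diagonal} and \textbf{twisted wreath} types are disposed of by direct order estimates that comfortably satisfy (3). The main obstacle is the almost simple case: it both forces the use of CFSG and demands delicate, uniform estimates over all Lie-type families to pin down the sharp bound, and it is here that one must verify that no further exceptions arise beyond the Mathieu groups and the subset actions; a secondary difficulty is making the product-action induction watertight, so that exceptional families propagate exactly as in (1) and nowhere else.
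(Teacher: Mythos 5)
This statement is not proved in the paper at all: it is Mar\'oti's theorem, imported verbatim from \cite{Mar2002} and used as a black box in the proof of Lemma~\ref{pri<=n-4}. So there is no internal proof to compare yours against; the only fair comparison is with Mar\'oti's published argument. Your outline does match the architecture of that argument --- reduction via O'Nan--Scott to the basic types, the observation that the bound in (3) is exactly $|\mathrm{AGL}(d,2)|$ when $n=2^d$ and is attained there, the subset actions of $S_m$ propagating through the product action to give family (1), and the four Mathieu groups in their $4$-transitive actions as the sporadic exceptions in (2). The affine and diagonal/twisted-wreath estimates you sketch are also essentially right (for the affine case, $p^i\ge 2^i$ gives $p^d-p^i\le n-2^i$ termwise and there are fewer factors, so the comparison goes through).

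However, what you have written is a roadmap, not a proof, and the gap sits precisely where you locate it yourself: the almost simple case. Saying that ``one bounds $|\mathrm{Aut}(T)|$ against the minimal faithful permutation degree and checks that (3) always holds'' is the entire content of the theorem for Lie-type socles, and it is not routine --- the bound (3) is roughly $n^{\log_2 n}$, which is far smaller than the crude Bochert/Babai-type bounds, so one genuinely needs the CFSG-based estimates (in Mar\'oti's paper these come from prior work of Liebeck and of Cameron on orders and degrees of almost simple primitive groups) together with a careful treatment of the actions of $A_m$ on partitions as well as on $k$-subsets, and of small-degree exceptions. Likewise the product-action induction needs the precise statement that the point stabiliser structure forces $H$ to be of almost simple or diagonal type with socle acting on $\Delta$, and that raising the degree to the $l$-th power while only multiplying the order by $|S_l|\cdot$(something polynomial) keeps (3) intact except along the $\binom{m}{k}^l$ family. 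None of these steps is carried out, so the proposal cannot be accepted as a proof; as a description of how \cite{Mar2002} proceeds, it is accurate in outline.
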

Now we establish that the C-rank of $\Gamma_i$ is at most $n-4$ when $\Gamma_i$ is primitive and $n\geq 9$.
\begin{lemma} \label{pri<=n-4}
The C-rank of a C-group $(\Gamma, \{\rho_0,\ldots ,\rho_{r-1}\})$ where $\Gamma$ is a primitive group of degree $n\geq 9$, not isomorphic to $A_n$ or $S_n$, is at most $n-4$.
\end{lemma}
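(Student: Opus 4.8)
The plan is to combine Maróti's bound (Theorem \ref{maroti}) with Whiston's bound on the size of an independent generating set of a permutation group. Recall that the C-rank of a group is at most the size of its largest independent generating set, and for a subgroup $G \leq S_n$ Whiston's argument gives that any independent subset of $G$ has size at most $n-1$, with the extremal configurations well understood. The key quantitative input, however, is that an independent set $\{\rho_0, \ldots, \rho_{r-1}\}$ forces a strictly increasing chain of subgroups $1 < \langle \rho_0 \rangle < \langle \rho_0, \rho_1 \rangle < \cdots < \langle \rho_0, \ldots, \rho_{r-1}\rangle = \Gamma$, so that $2^r \leq |\Gamma|$, and more sharply each successive index is at least $2$, giving $r \leq \log_2 |\Gamma|$.

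First I would dispose of the cases in Maróti's trichotomy one at a time. In case (3) we have the explicit bound $|\Gamma| \leq n \cdot \prod_{i=0}^{\lfloor \log_2 n - 1\rfloor}(n - 2^i)$; taking $\log_2$ and using $r \leq \log_2 |\Gamma|$ yields $r \leq \log_2 n + \sum_{i=0}^{\lfloor \log_2 n -1\rfloor} \log_2(n - 2^i)$, and I would check by a direct (elementary) estimate that this right-hand side is below $n-3$ for all $n \geq 9$, so in particular $r \leq n-4$. This is a routine monotonicity computation: the dominant term grows like $(\log_2 n)^2$, which is far smaller than $n-4$. For case (2), the Mathieu groups are a finite list ($M_{11}, M_{12}, M_{23}, M_{24}$ with $n = 11, 12, 23, 24$); their orders are known, so $\log_2|\Gamma|$ is bounded by a small constant in each case, easily checked to satisfy $r \leq n-4$.

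The main obstacle is case (1), the wreath-product case $\Gamma \leq S_m \wr S_l$ acting on $k$-subsets with $n = \binom{m}{k}^l$, since here $|\Gamma|$ can be genuinely large (e.g. $l=1$, $k=1$ gives $S_m$ itself, which is excluded, but $l=1$, $k \geq 2$ gives the action of $S_m$ on $k$-subsets). For this case I would again bound $r \leq \log_2 |\Gamma| \leq \log_2\bigl((m!)^l \, l!\bigr) = l\log_2(m!) + \log_2(l!)$ and compare this against $n - 4 = \binom{m}{k}^l - 4$. Since $\binom{m}{k}$ grows super-polynomially in $m$ once $k \geq 2$, while $\log_2(m!)$ grows only like $m\log_2 m$, the binomial factor dominates; I would make this precise by splitting into the subcase $k=1$ (where $n = m^l$ with $l \geq 2$, as $l=1, k=1$ is $S_m$) and the subcase $k \geq 2$, and in each verify the inequality for all $m, l$ with $\binom{m}{k}^l = n \geq 9$. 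The delicate parameter values will be the smallest admissible ones (for instance $m=5, k=2, l=1$ giving $n=10$, or $m=3, l=2$ giving $n=9$), which I would check individually to confirm $r \leq n-4$ rather than rely solely on the asymptotic estimate.
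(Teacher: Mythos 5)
Your overall strategy --- running through Mar\'oti's trichotomy and bounding the C-rank by the length of a subgroup chain --- is the same as the paper's, but the quantitative tool you rely on, $r \leq \log_2|\Gamma|$, is far too weak in precisely the regimes that matter, and this is a genuine gap rather than a routine verification. In case (2), $\log_2|M_{11}| \approx 12.95$ while $n-4 = 7$, and $\log_2|M_{12}| \approx 16.54$ while $n-4 = 8$, so the order bound proves nothing; the paper instead uses the maximal length of a chain of subgroups (which is $7$ for $M_{11}$), and even that does not suffice for $M_{12}$, whose rank bound from Whiston is $9 > 8$: ruling out C-rank $9$ requires a descent through the maximal subgroups $M_{11}$ and $P\Gamma L(2,9)$, and then through $PGL(2,9)$, $S_6$ and $M_{10}$, quoting known C-ranks of each. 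In case (3), your claim that $\log_2\bigl[n\prod_{i}(n-2^i)\bigr]$ lies below $n-3$ for all $n\geq 9$ is false: at $n=10$ this quantity is about $12.08$ versus $n-4=6$, and it only drops below $n-4$ around $n\geq 26$. The paper must therefore list every primitive group of degree $9\leq n\leq 25$ whose order exceeds the chain-length threshold (its Table~1) and dispose of each individually, using subgroup-lattice chain lengths, the classification of C-groups of $PSL(2,q)$ and $PGL(2,q)$, and {\sc Magma} searches for $AGL(2,3)$ and $AGL(4,2)$ --- the last of these cannot be settled by any chain-length bound at all.

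Case (1) has the same defect at its boundary: for $m=3$, $l=2$, $k=1$ one gets $n=9$ and $\log_2\bigl((3!)^2\cdot 2!\bigr)\approx 6.17 > 5 = n-4$, and for $m=5$, $k=2$, $l=1$ one gets $n=10$ and $\log_2(5!)\approx 6.91 > 6$. You do flag these as cases to check individually, but the check cannot be the inequality you propose, since that inequality is false there; what is needed is a sharper independent-set bound for wreath products (the paper uses $r\leq ml-2$ when $l\geq 2$, extracted from Whiston's analysis of imprimitive groups) and, for $l=1$, $k\geq 2$, the bound $r\leq m-1\leq n/2-1$ coming from viewing $\Gamma$ inside $S_m$ as an abstract group rather than from its order in the degree-$n$ action. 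In short, the skeleton of your plan matches the paper's, but the single estimate it leans on fails in every delicate regime, and closing the gap requires group-specific input (subgroup lattices, prior C-rank classifications, and machine computation) that the plan does not anticipate.
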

\begin{proof}
We consider separately the three possibilities given by Theorem~\ref{maroti}.

In case (1)  when $l\geq 2$ we have 
$n-4 = (^m_k)^l-4\geq m^l-4\geq ml-2\geq r$. When $l=1$ we have $k\geq 2$, $m\leq n/2$ and the group is a subgroup of $S_m$ or $A_m$, so its
rank is at most $m-1$, much smaller than $n-4$.

In case (2) we have to consider the groups $M_{11}, M_{12}, M_{23}$ or $M_{24}$. The maximal length of a chain of subgroups of $M_{11}$, $M_{23}$ or $M_{24}$ is 7, 11 and 14 resp. (see \cite{Whis00}). 
If $\Gamma$ is isomorphic to $M_{12}$ then the rank of $\Gamma$ is at most 9 \cite{Whis00}. 
Suppose that the C- rank of  $\Gamma$ is $9$. Then one of the following subgroups of $M_{12}$, namely $M_{11}$ or $P\Gamma L(2,9)$, has to have C- rank  8.
As the rank of $M_{11}$ is at most 7 (see~\cite{Whis00}), $P\Gamma L(2,9)$ should have C- rank  $8$. 
If this is so, one the following groups, $PGL(2,9)$, $S_6$ or $M_{10}$ have C- rank  7. The latter is not generated by involutions, $S_6$ is known to have C- rank  at most 5 and $PGL(2,9)$ has C- rank  at most 3 (see \cite{DJT}), thus we get a contradiction. Hence the C- rank  of $M_{12}$ is at most 8.

In case (3) we have that the chain length is bounded by $\log_2 \left[n.\prod_{i=0}^{\lfloor\log _2 n-1\rfloor}(n-2^i)\right]$ that is at most $n-4$ for $n\geq 26$. 
For the primitive groups of degree 9 (see for instance~\cite{BL96}), we readily see that the only groups that can be generated by involutions are $3^2:D_8$, $AGL(2,3)$ and $PSL(2,8)$. The group $3^2:D_8$ has chains of maximum length 5 in its subgroup lattice.
An exhaustive computer search with {\sc Magma}~\cite{BCP97} gives C-rank 4 for $AGL(2,3)$. The C-rank of $PSL(2,8)$ is known to be 3 by~\cite{DJT}.
The primitive groups $G$ with degree between 10 and 25 for which $\log_2(|G|)>n-4$ are listed in Table~\ref{exceptions} where $r$ is the C-rank of the corresponding group. 
\begin{table}
\begin{center}
\begin{tabular}{cll|cll|cll}
$n$& $G$ &$r$&$n$& $G$ &$r$&$n$& $G$ &$r$\\
\hline
$10$& $PSL_2(9)$& $3$ \cite{DJT}&         $12$&$PSL_2(11)$& $4$  \cite{DJT}&      $16$&$2^4:A_6$&$\leq 10$\\      
         &$S_6$ & $5$ \cite{Whis00}&                     &$PGL_2(11)$& $4$ \cite{DJT}&                     &$2^4:S_6$&$\leq 11$\\                      
        &$PGL_2(9)$ & $4$ \cite{DJT}&           $13$&$PSL_3(3)$&  $\leq 8$ &                     &$2^4:A_7$&$\leq 11$\\                 
        &$M_{10}$ & 0 &                 $14$& $PGL_2(13)$&$3$ \cite{DJT}&                    &$AGL_4(2)$& $\leq 12$\\
        &$P\Gamma L_2(9)$&$\leq 6$&                       $15$&$PSL_4(2)$&$\leq 9$ &              $22$&$M_{22}:2$&$\leq 13$\\
        
$11$&$PSL_2(11)$& $4$ \cite{DJT}&&&&&\\
                           
\end{tabular}
\end{center}
\caption{The groups not failing the chain length bound.}\label{exceptions}
\end{table}
It is well known that $M_{10}$ cannot be generated by involutions, hence its C-rank is 0.
For $P\Gamma L(2,9)$, we already showed above that the C- rank  is at most $6$.
A chain of subgroups in the subgroup lattice of $PSL(3,3)$ has length 8 at most, hence the C-rank of $PSL(3,3)$ is 8 at most.
A chain of subgroups in the subgroup lattice of $PSL(4,2)$ has length 9 at most, hence the C-rank of $PSL(4,2)$ is 9 at most.
A chain of subgroups in the subgroup lattice of $2^4:A_7$ or $2^4:S_6$ has length 11 at most, hence the C-rank of these groups is 11 at most.
A chain of subgroups in the subgroup lattice of $2^4:A_6$ has length 10 at most, hence the C-rank is 10 at most.
A chain of subgroups in the subgroup lattice of $M_{22}:2$ has length 13 at most, hence the C-rank is 13 at most.
For $AGL(4,2)$, a chain of subgroups may have up to length 13 but a computer search shows that none of the groups at Level 5 in the subgroup lattice that could have a C-group representation with a non-empty graph as diagram have C- rank  5. So $AGL(4,2)$ cannot be of C- rank 13.
\end{proof}

\begin{lemma} \label{neverAnM12AGL}
If  $\Gamma:=(S_n,\{\rho_0,\ldots, \rho_{r-1}\})$  is a C-group of rank $n-2$ with $n\geq 3$, then $\Gamma_i$ is not isomorphic to  $A_n$.
\end{lemma}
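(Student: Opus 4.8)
The plan is to reduce the statement to the claim that $A_n$ cannot occur as a C-group of rank $n-3$, and then to obstruct this by a parity-and-counting argument on the permutation representation graph. First I would observe that if $\Gamma_i \cong A_n$, then since $\Gamma_i$ is a subgroup of $S_n$ of degree $n$ with $|\Gamma_i| = n!/2$, it has index $2$ in $\Gamma = S_n$; as $A_n$ is the unique subgroup of index $2$, we get $\Gamma_i = A_n$ exactly. Because maximal parabolic subgroups of a C-group inherit the intersection property~(\ref{ip}), the pair $(A_n, \{\rho_j : j \neq i\})$ is itself a C-group, of rank $n-3$. Moreover every $\rho_j$ with $j \neq i$ lies in $A_n$ and is therefore an \emph{even} involution, i.e. a product of an even number (at least two) of disjoint transpositions; the remaining generator $\rho_i$ must be odd, since otherwise all generators would be even and $\Gamma$ would be contained in $A_n \neq S_n$.

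Next I would record the quantitative consequence. Since each of the $n-3$ generators of $A_n$ contributes at least two edges of its label to the permutation representation graph of this C-group, that graph has at least $2(n-3)$ edges; being transitive, $A_n$ has a connected permutation representation graph on $n$ vertices, whose cycle rank is therefore at least $2(n-3)-(n-1) = n-5$. If all maximal parabolic subgroups of the rank-$(n-3)$ C-group $A_n$ are intransitive, then, as in the rank $n-1$ situation of Theorem~\ref{CaCa} and the forest structure of fracture graphs, a fracture graph of $A_n$ is a forest with exactly $n-3$ edges; but each generator carrying at least two transpositions then forces at least $n-3$ edges lying outside the fracture graph, which for $n$ large is far too many to be reconciled with~(\ref{ip}).

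If instead some maximal parabolic of $A_n$ is transitive, it is a proper transitive subgroup of $A_n$ of degree $n$ generated by $n-4$ involutions; it can be neither $A_n$ nor $S_n$, so when it is primitive Lemma~\ref{pri<=n-4} together with Theorem~\ref{maroti} bounds its C-rank strictly below $n-4$, contradicting its rank $n-4$, while the imprimitive case is disposed of by the block/orbit analysis developed in this section. The step I expect to be the main obstacle is making the cycle-versus-intersection-property argument precise: the main theorem itself shows that a \emph{bounded} number of $2$-transposition generators is permitted, so the contradiction must come from the fact that \emph{all} $n-3$ generators of $A_n$ are forced to be even, which requires quantifying exactly how many non-fracture edges the intersection property can tolerate in a forest with three components. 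The remaining task, settling the finitely many small values of $n$ for which these asymptotic bounds are inconclusive, I would handle by direct inspection.
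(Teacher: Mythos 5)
Your reduction (that $\Gamma_i=A_n$ forces all $\rho_j$, $j\neq i$, to be even involutions, hence products of at least two disjoint transpositions) is correct, but the two arguments you build on it both have genuine gaps. First, the counting step is not a proof: you assert that at least $n-3$ non-fracture edges are ``far too many to be reconciled with~(\ref{ip})'', but the only tool the paper has for bounding non-fracture edges, Lemma~\ref{connection}, is proved specifically for a fracture graph with \emph{two} components (the rank $n-2$ setting for $S_n$); for a rank-$(n-3)$ C-group the fracture graph has three components, the proof of Lemma~\ref{connection}(1) does not transfer, and you would have to redo the entire analysis of how non-fracture edges can sit between three components. You acknowledge this is the missing step, which means the intransitive case is not actually handled. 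Second, the transitive case fails outright: Lemma~\ref{pri<=n-4} bounds the C-rank of a primitive group (not $A_n$, $S_n$) of degree $n\geq 9$ by $n-4$, i.e.\ \emph{at most} $n-4$, not strictly below it. A maximal parabolic of your rank-$(n-3)$ C-group has rank exactly $n-4$, so there is no contradiction in the primitive subcase; the imprimitive subcase is likewise not ``disposed of'' by Proposition~\ref{imp}, whose conclusion $r\leq n-3$ is again compatible with rank $n-4$. Finally, the lemma is asserted for all $n\geq 3$, and your asymptotic bounds leave an unspecified list of small $n$ to inspect.

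The paper's proof is two lines and uses none of this machinery: by the intersection property, $\Gamma_0\cap\langle\rho_0,\rho_i\rangle=\langle\rho_i\rangle$; since $\rho_i^{\rho_0}$ has the same cycle type as $\rho_i$ it is even, hence lies in $A_n=\Gamma_0$, and it also lies in $\langle\rho_0,\rho_i\rangle$, so $\rho_i^{\rho_0}=\rho_i$ for all $i$. Then $\rho_0$ centralizes $A_n$ and $S_n\cong A_n\times C_2$, a contradiction. The observation you are missing is that the parity constraint you correctly identified can be fed directly back into the intersection property applied to the rank-two subgroups $\langle\rho_0,\rho_i\rangle$, rather than into a count of edges of the permutation representation graph.
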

\begin{proof}
Suppose without loss of generality that $\Gamma_0$ is isomorphic to $A_n$.
Then, since $\Gamma$ satisfies the intersection property, we have $\Gamma_0 \cap \langle \rho_0,\rho_i \rangle = \langle \rho_i \rangle$ for all $i = 1,\ldots, n-3$.
But then $\rho_i^{\rho_0} = \rho_i$ for all $i$ which implies that $S_n \cong A_n\times C_2$, a contradiction.
\end{proof}

\begin{proposition}\label{imp}
Let $n\geq 9$. If  $\Gamma:=(S_n,\{\rho_0,\ldots, \rho_{r-1}\})$  is a C-group of rank $r$ and  $\Gamma_i$  is transitive imprimitive for some $i\in \{0,\ldots, r-1\}$, then $r\leq n-3$.
\end{proposition}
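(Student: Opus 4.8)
The plan is to bound the C-rank of the transitive imprimitive maximal parabolic $\Gamma_i$ directly. Since $\Gamma_i=\langle\rho_j\mid j\neq i\rangle$ is a maximal parabolic of the C-group $\Gamma$, the intersection property~(\ref{ip}) restricted to subsets of $\{0,\dots,r-1\}\setminus\{i\}$ shows that $\Gamma_i$ is itself a C-group of rank $r-1$; hence it suffices to prove that a transitive imprimitive subgroup of $S_n$ which is generated by an independent set of involutions satisfying~(\ref{ip}) has rank at most $n-4$. As a sanity check, $r=n-1$ is already excluded: by Theorem~\ref{CaCa} the permutation representation graph is then a tree and deleting the edges of a single label disconnects it, so every $\Gamma_i$ is intransitive. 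Our bound will, however, be established uniformly in $r$.

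To obtain it I would exploit the block structure. Fix a system of imprimitivity for $\Gamma_i$ consisting of $k$ blocks of size $b$, so that $n=bk$ with $b,k\geq 2$ and $\Gamma_i\leq S_b\wr S_k$. Let $\pi\colon\Gamma_i\to\overline{\Gamma_i}\leq S_k$ be the induced action on the set of blocks, a transitive group of degree $k$, and let $K:=\ker\pi=\Gamma_i\cap (S_b)^k$ be the elementwise block stabiliser. Call a generator $\rho_j$ \emph{inner} if $\rho_j\in K$ and \emph{outer} otherwise. The images of the outer generators generate the transitive group $\overline{\Gamma_i}\leq S_k$, so, morally by Whiston's bound, they should account for at most $k-1$ of the $\rho_j$; and because $\overline{\Gamma_i}$ is transitive on the $k$ coordinates of $(S_b)^k$, the inner generators supported on a single block, together with the outer ones, already generate $K$, so at most $b-1$ inner generators are genuinely needed. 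Combining these I expect the bound $r-1\leq (b-1)+(k-1)=b+k-2\leq \tfrac{n}{2}$, which is at most $n-4$ for every $n\geq 8$, giving $r\leq n-3$.

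The hard part is that independence does not pass to the quotient $\pi$, so this additive splitting is not automatic: counting strict inclusions along the two chains $\pi(\langle\rho_{j_1},\dots\rangle)$ and $\langle\rho_{j_1},\dots\rangle\cap K$ only reproduces Whiston's weaker bound $n-1$, since a priori one could try to place an independent generating set of $S_b$ inside each of the $k$ blocks. The crux is therefore to show that transitivity of the block action forbids this: an outer generator conjugates the inner generators supported on one block to those supported on another, so an independent set cannot carry close to a full independent generating set in more than essentially one block without becoming dependent. Making this rigorous — by tracking, along the subgroup chain of the independent set, which blocks have already been saturated and bounding the number of kernel steps by $b-1$ rather than $k(b-1)$ — is the main obstacle. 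Finally I would dispatch the few small or balanced degrees not covered by the generic estimate, in particular $n=9$ with $b=k=3$, directly (as is done elsewhere in the paper with {\sc Magma}), and verify that nested block systems only improve the bound.
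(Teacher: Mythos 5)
There is a genuine gap, and you have located it yourself: the additive bound $r-1\leq (b-1)+(k-1)$ is exactly the step you call ``the main obstacle'', and it is not merely unproved but stronger than what the available techniques yield. Two things go wrong. First, an element of the kernel $K$ need not be supported on a single block: an involution can fix every block setwise while acting nontrivially in several of them, so your dichotomy between ``inner generators supported on one block'' and ``outer generators'' does not describe the generating set, and conjugation by outer generators does not cut the inner contribution down to $b-1$. Second, even after normalising the generators as in Whiston's Lemma~3 (which the paper uses to replace $S$ by $M\cup\bar N$ with $\bar N$ inside the kernel), the correct worst-case count for an independent set, obtained by saturating the blocks one at a time, is $(k-1)+(b-1)+(k-2)+(b-2)=2b+2k-6$ in your notation, not $b+k-2$. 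The inequality $2b+2k-6\leq n-4$ is equivalent to $(b-2)(k-2)\geq 2$, which is what makes the generic case work --- and which fails whenever one of the two block parameters equals $2$, as well as for $b=k=3$, $n=9$.

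That failure is the real content of the paper's proof, and your proposal does not engage with it. The cases $b=2$ and $k=2$ are not ``a few small or balanced degrees'' that can be dispatched by computer: they occur for every even $n$, and for them the counting argument alone does not give $r\leq n-3$. The paper handles them by bringing in the \emph{other} maximal parabolics: it shows that some $\Gamma_{i,j}$ is still transitive and then splits according to whether $\Gamma_j$ is primitive (reducing to Lemmas~\ref{pri<=n-4} and~\ref{neverAnM12AGL}) or imprimitive with respect to a second, distinct block system, in which case preserving two different systems of two blocks forces a dihedral group (the argument of~\cite{Corr}), whose C-rank is logarithmically small. None of this is recoverable from block counting inside the single subgroup $\Gamma_i$. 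Your opening reduction --- that it suffices to bound the size $r-1$ of the independent generating set of the transitive imprimitive group $\Gamma_i$ by $n-4$ --- is correct and matches the paper, but the two essential ingredients (the genuine Whiston-type count and the treatment of the $b=2$, $k=2$ families) are missing.
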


\begin{proof}
The case $n=9$ is dealt with by performing an exhaustive computer search using {\sc Magma}. We now assume $n\geq 10$.
Suppose that $\Gamma_i$ is embedded into $S_k\wr S_m$ with $k$ being maximal.
Let $S=\{\rho_j\,|\, j\in I, j\neq i\}$ and denote by $M$ the subset of $S$ generating the block action.
We have that $|M|\leq m-1$ (see \cite{Whis00}).
We can use the elements of $M$ to undo the block action  of the elements of $N:=S\setminus M$ (by right multiplication), and as shown in Lemma 3 of 
\cite{Whis00}, we get an independent set $\bar{S}=M\cup \bar{N}$ with $\bar{N}$ fixing the blocks setwisely.
Consider the action of $\bar{N}$ on the blocks. 

First  suppose there is no ordering on the blocks such that $\bar{N}$ acts as $S_k$ on the first $m-2$  blocks.
In that case $|S|\leq (m-1)+m(k-1)-3 = n-4.$ Thus $r\leq n-3$.

Now assume there exists an ordering such that the action of $\bar{N}$ on the first $m-2$ blocks is $S_k$. 
We now use the following argument used in the proof of Proposition 4 of \cite{Whis00}:
if $B\subset \bar{N}$ generate the action on the first $i$ blocks  for  $i\leq m-2$, then at most one element of $\bar{N}$ needs to be added to $M\cup B$ to generate the block action on $(i+1)$ blocks.
 Therefore,
 $$|S|\leq (m-1)+(k-1)+(m-2)+(k-2)=2m+2k-6.$$
 Now $2m+2k-6\leq km-4 \Leftrightarrow(m-2)(k-2)\geq 2$ which is true for $n\geq 10$
and $m,k\neq 2$.

Let us consider the case  $k=2$. If $ |\bar{N}|\leq 1$ then $r\leq m=\frac{n}{2}\leq n-4$, thus it may be assumed that $ |\bar{N}|\geq 2$. In that case there exists $j\neq i$ such $\Gamma_{i,j}$ is transitive. First if $\Gamma_j$ is primitive then by Lemmas~\ref{pri<=n-4} and \ref{neverAnM12AGL} $r\leq n-3$. If $\Gamma_j$ is imprimitive by assumption $\Gamma_j$ cannot be embedded  into a wreath product with  more than two blocks. Hence  $\Gamma_{i,j}$ is embedded into $S_2\wr S_{\frac{n}{2}}$ for two distinct blocks systems. Now we can use the argument used in \cite{Corr} to prove that $\Gamma_{i,j}$ must be a dihedral group. Therefore $r-2\leq \log _2(2n)\leq n-5$ for $n\geq 10$.
 
Now suppose that $m=2 $ and assume the action of  $\bar{N}$ is not $S_k$ neither in block 1 nor in block 2, for otherwise $r-1\leq 1+(k-1)+1\leq 2k-3=n-3$ for $n\geq 10$. 
Consider two subsets of $\bar{N}$, $A$ and $B$, generating independently the block action of $\bar{N}$ on block 1 and on block 2, respectively.
If  $A\cap B\neq \emptyset$ then $|\bar{N}|\leq|A|+|B|-1$, thus
$$|S|\leq 1+ 2(k-2)-1=n-4.$$
Suppose that  $A\cap B=\emptyset$ and let $M=\{\tau\}$. As the conjugation by the permutation of $\tau$ defines an isomorphism between the group action on block 1  and the group action on block 2, these groups  must both be transitive on the respective blocks, otherwise $\Gamma_i$ is itself intransitive, a contradiction. 
Then if  any element $\alpha_j\in\bar{N}=A\cup B$ is removed from the generating set $M\cup\bar{N}$, we still get a transitive group
$\Gamma_{i,j}$. If $\Gamma_j$ is primitive then $r\leq n-3$.
Suppose that $\Gamma_j$ is imprimitive.  If $\Gamma_j$ is embedded into a block system with more than two blocks of size greater than two, then we have proved previously that $r\leq n-3$. Thus we need to consider only two cases: $\Gamma_j$  is embedded into $S_{\frac{n}{2}}\wr S_2$ and $\Gamma_j$  is embedded into $S_2\wr S_{\frac{n}{2}}$.

Let us first consider that $\Gamma_j$  is embedded into $S_{\frac{n}{2}}\wr S_2$. 
The block systems of the embeddings of $\Gamma_j$ and $\Gamma_i$ need to be different, otherwise $\Gamma$ is itself embedded into  $S_{\frac{n}{2}}\wr S_2$, a contradiction.
Thus $\Gamma_{i,j}$ is embedded into $S_{\frac{n}{2}}\wr S_2$ for two distinct block systems $\{X,X\tau\}$ and $\{Y,Y\delta\}$. 
Then  $\Gamma_{i,j}$ is also embedded into $S_{\frac{n}{4}}\wr S_4$ with the block system being $\{X\cap Y\}, \{X\cap Y\delta\},\{X\tau\cap Y\},\{X\tau\cap Y\delta\}\}$ with $\{\tau, \delta\}$ generating the block action. Now let $M=\{\tau, \delta\}$,  let $N$ be the set of the remaining generators of $\Gamma_{i,j}$ and consider the set $\bar{N}$ as before.
If there is no ordering on the blocks such that $\bar{N}$ acts as $S_{\frac{n}{4}}\times S_{\frac{n}{4}}$ in the first two blocks we have 
$|S|-1\leq 2+ 4(n/4-1)-3\Rightarrow |S|\leq n-4$.
Now suppose the contrary. Hence we have $|S|-1\leq 2+ (n/4-1)+2+ (n/4-2)\Rightarrow |S|\leq n-3 $ for $n\geq 10$.
Suppose that $\Gamma_j$  is embedded into $S_2\wr S_{\frac{n}{2}}$. 
Now we may assume that for this embedding $|\bar{N}|\leq 1$ otherwise there exist $k\neq j,i$ such that $\Gamma_k$ is transitive and then we get one of the situations considered previously. Hence $r-1\leq 1+ \frac{n}{2}-1\leq n-4$ for $n\geq 8$.
\end{proof}

\section{Intransitive maximal parabolic subgroups of a C-group of rank $n-2$ for $S_n$}\label{intransitive}

Let $n\geq 9$ and  $\Gamma:= (S_n, \{\rho_0,\ldots, \rho_{n-3}\})$ be a C-group of rank $n-2$.
Suppose that $\Gamma_i$ is intransitive for every $i\in\{0,\ldots, n-3\}$.
Let $\mathcal{G}$ be the permutation representation graph of $\Gamma$ and $\mathcal{F}$ be a fracture graph of $\Gamma$. 
We recall the following lemma that will be useful in later proofs.

\begin{lemma}\cite[Lemmas 3.2 and 3.5]{extension}\label{lemma3.1}
$\mathcal{F}$ has exactly 2  connected components. 
Moreover, the edges of $\mathcal{G}$ that are not in $\mathcal{F}$ must connect vertices in different connected components of $\mathcal{F}$.
\end{lemma}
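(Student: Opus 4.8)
The plan is to reduce both assertions to a single combinatorial principle: the connected components of the subgraph $\mathcal{G}_{I\setminus\{i\}}$ (all edges of $\mathcal{G}$ whose label differs from $i$) are exactly the orbits of $\Gamma_i=\langle\rho_j\mid j\neq i\rangle$, since two vertices lie in the same component of $\mathcal{G}_{I\setminus\{i\}}$ precisely when one is carried to the other by a word in the $\rho_j$ with $j\neq i$. Under this dictionary the defining property of a fracture graph reads: for each label $\ell$, the two endpoints of the $\ell$-edge $f_\ell$ of $\mathcal{F}$ lie in \emph{different} components of $\mathcal{G}_{I\setminus\{\ell\}}$. Both statements will then come out of confronting this with the presence of a cycle.

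For the first assertion I would first record that $\mathcal{F}$ has exactly $n-2$ edges (one per label) on $n$ vertices, so it has at least $2$ connected components, with equality if and only if it is a forest; alternatively this is just the instance $c=n-r=2$ of the fact recalled before the lemma. To see that $\mathcal{F}$ is acyclic, suppose it contained a cycle and pick any edge $f_\ell$ on that cycle. The remaining edges of the cycle form a path joining the two endpoints of $f_\ell$ and use labels all different from $\ell$, since the labels of the $\mathcal{F}$-edges are pairwise distinct. Hence those endpoints lie in the same component of $\mathcal{G}_{I\setminus\{\ell\}}$, i.e. in the same $\Gamma_\ell$-orbit, contradicting the fracture condition on $f_\ell$. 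Thus $\mathcal{F}$ is a forest, and the edge count forces exactly two components $\mathcal{C}_1,\mathcal{C}_2$.

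For the second assertion I would run the very same argument one level up. Let $e=\{a,b\}$ be an edge of $\mathcal{G}\setminus\mathcal{F}$ of some label $i$, and suppose for contradiction that $a$ and $b$ lie in the same component of $\mathcal{F}$. Then the unique $\mathcal{F}$-path $P_{ab}$ between them, together with $e$, forms a cycle $Z$ in $\mathcal{F}\cup\{e\}$, all of whose edges belong to $\mathcal{G}$. Because $\mathcal{G}_{\{i\}}$ is a matching, the at most two $i$-edges that $Z$ can contain, namely $e$ and possibly $f_i$, are vertex-disjoint, so $Z$ cannot consist of $i$-edges alone; hence $Z$ contains some $\mathcal{F}$-edge $f_\ell$ with $\ell\neq i$. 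Deleting $f_\ell$ from $Z$ leaves a path joining its endpoints using labels different from $\ell$, so those endpoints lie in the same $\Gamma_\ell$-orbit, again contradicting the fracture condition. Therefore $a$ and $b$ must lie in different components of $\mathcal{F}$.

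The conceptual heart of both parts is the orbit–component dictionary together with the observation that any cycle built from $\mathcal{F}$-edges is forced to ``reuse'' a label whose unique representative in $\mathcal{F}$ then fails to separate the corresponding orbits. The only step requiring care is guaranteeing, in the second part, that the fundamental cycle $Z$ genuinely contains an $\mathcal{F}$-edge of label different from $i$; this is exactly where the matching property of $\mathcal{G}_{\{i\}}$ enters, and it simultaneously disposes of the degenerate possibility that $e$ and $f_i$ are parallel (equivalently, that $\rho_i$ is a transposition, in which case it contributes no edge outside $\mathcal{F}$ at all). Once this is secured, neither the intersection property nor the transitivity of $S_n$ is needed to close the argument.
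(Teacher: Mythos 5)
Your argument is correct, but note that the paper does not actually prove this lemma: it is imported verbatim from \cite{extension} (Lemmas 3.2 and 3.5), so there is no in-paper proof to compare against. What you have written is a valid self-contained derivation, and it rests on exactly the right two observations: the dictionary identifying the connected components of $\mathcal{G}_{I\setminus\{i\}}$ with the $\Gamma_i$-orbits, and the fact that any cycle in $\mathcal{F}$ (or in $\mathcal{F}$ plus one extra edge $e$ of label $i$) must contain an $\mathcal{F}$-edge $f_\ell$ whose label $\ell$ occurs nowhere else on the cycle, whence its endpoints lie in one $\Gamma_\ell$-orbit, contradicting the defining property of a fracture graph. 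Your use of the matching property of $\mathcal{G}_{\{i\}}$ to rule out the degenerate case where the fundamental cycle consists only of $i$-edges is the one step that genuinely needs saying, and you say it. Two tiny points of care: the edge count giving ``at least $n-r$ components, with equality iff forest'' should be read in the multigraph sense, since a priori $\mathcal{F}$ could contain a double edge with two different labels --- but your cycle argument already excludes this, a $2$-cycle being a cycle; and the parenthetical about $e$ and $f_i$ being ``parallel'' is moot, since an $i$-edge between a given pair of vertices is unique, so that case never arises. Neither affects the validity of the proof.
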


In the next lemma we prove that the connection between the two components of the fracture graph is made either by a single edge, a multiple edge or an alternating square.

\begin{lemma}\label{connection}
If $\{a,b\}$ and $\{c,d\}$ are edges of $\mathcal{G}\setminus\mathcal{F}$ with labels $i$ and $j$ respectively then: 
\begin{enumerate}
\item $\{a,b\}\cap\{c,d\}\neq \emptyset$, particularly $i\neq j$;
\item If $\{a,b\}\neq \{c,d\}$ then $i$ and $j$ are labels of edges of a square with alternating labels $i$ and $j$ as in the following figure.
$$ \xymatrix@-1pc{ & *+[o][F]{}   \ar@{-}[dr]^j & \\
*+[o][F]{}  \ar@{--}[ur]^i \ar@{--}[dr]_j& & *+[o][F]{}\ar@{-}[dl]^i \\
&*+[o][F]{}&  }$$
\end{enumerate}
\end{lemma}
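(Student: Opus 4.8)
The plan is to exploit two facts throughout: Lemma~\ref{lemma3.1}, which says every chord (edge of $\mathcal{G}\setminus\mathcal{F}$) joins the two components $C_1,C_2$ of $\mathcal{F}$; and the intersection property, in the sharp form $\langle\rho_i,\rho_j\rangle\cap\langle\rho_k\mid k\neq i,j\rangle=1$, together with the standing hypothesis that every $\Gamma_k$ is intransitive. The workhorse is a local analysis of $\Gamma_k$-orbits. Since $\mathcal{F}$ is a forest with exactly one edge of each label, deleting the unique $k$-edge $e_0^{(k)}$ of $\mathcal{F}$ splits $\mathcal{F}$ into three subtrees, each contained in a single $\Gamma_k$-orbit (every remaining $\mathcal{F}$-edge has label $\neq k$, hence its endpoints, being swapped by an element of $\Gamma_k$, lie in the same $\Gamma_k$-orbit). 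The chords of label $\neq k$ are the only other edges available to merge these subtrees, and by Lemma~\ref{lemma3.1} each such chord joins $C_1$ to $C_2$. Because $\Gamma_k$ is intransitive, these chords cannot reconnect all three subtrees; hence they all attach on the same side of the deleted edge $e_0^{(k)}$, and $\Gamma_k$ has exactly two orbits (or three, if there is no non-$k$ chord).

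For part~(1) I would argue by contradiction. If $e=\{a,b\}$ (label $i$) and $f=\{c,d\}$ (label $j$) are disjoint chords, I feed $f$ as a non-$i$ chord into the orbit analysis for the label $i$, and $e$ as a non-$j$ chord into the analysis for $j$; combining the resulting one-sided attachment constraints with the disjointness of $e$ and $f$, one finds that the chords reconnect subtrees on both sides of some deleted $\mathcal{F}$-edge, forcing the corresponding $\Gamma_k$ to be transitive, contradicting the standing hypothesis. The special case $i=j$ is subsumed: two distinct $i$-edges are disjoint because the $i$-edges form a matching, so once the general intersection claim is in place there is at most one chord of each label. Consequently each $\rho_i$ has at most two transpositions, that is, it is a transposition or a $2$-transposition, which is exactly the regime of Theorem~\ref{main}.

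For part~(2), suppose $e$ and $f$ share a vertex, say $e=\{L,T\}$ and $f=\{L,B\}$ with $L\in C_1$ and $T,B\in C_2$ by Lemma~\ref{lemma3.1}. I would prove $p_{i,j}=2$, i.e.\ that $\rho_i$ and $\rho_j$ commute. The orbit of $L$ under $D:=\langle\rho_i,\rho_j\rangle$ is a dihedral orbit whose edges alternate between labels $i$ and $j$; by part~(1) there is at most one $i$-chord and one $j$-chord, while $\mathcal{F}$ contributes only one $i$-edge and one $j$-edge, so at most four edges of labels $i,j$ are available. I would use this scarcity, pinned down by the component/attachment constraints above and by $D\cap\Gamma_{ij}=1$, to rule out $p_{i,j}\geq 3$ (which would require a strictly longer alternating path). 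Once $\rho_i\rho_j$ has order $2$, the point $R:=\rho_i(B)=\rho_j(T)$ exists and $\{L,T,R,B\}$ is an alternating square with $i$-edges $\{L,T\},\{R,B\}$ and $j$-edges $\{L,B\},\{T,R\}$. Finally, $\{R,B\}$ is an $i$-edge distinct from the chord $e$, hence by part~(1) it is not a chord and lies in $\mathcal{F}$; likewise $\{T,R\}\in\mathcal{F}$. This reproduces the displayed figure.

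The main obstacle is the transitivity-forcing step inside part~(1), and its reuse to exclude $p_{i,j}\geq 3$ in part~(2): one must track carefully, for each relevant label $k$, which of the two subtrees obtained by deleting $e_0^{(k)}$ each chord attaches to, and verify that disjoint chords (respectively a long alternating orbit) are incompatible with all maximal parabolics being intransitive. This bookkeeping, rather than any single slick identity, is the technical heart of the lemma.
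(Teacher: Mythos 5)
Your part~(1) is, in substance, the paper's argument: the paper forms the cycle consisting of the two chords and the $\mathcal{F}$-paths joining their endpoints, picks an $\mathcal{F}$-edge of some label $k\notin\{i,j\}$ on that cycle, and observes that its endpoints are then joined by a path avoiding $k$-edges, contradicting the defining property of the fracture graph; your ``chords reconnecting the subtrees on both sides of $e_0^{(k)}$'' is the same contradiction in different words. One caveat: your treatment of $i=j$ is circular as written. You prove non-disjointness by feeding $f$ into the label-$i$ analysis ``as a non-$i$ chord,'' which presupposes $j\neq i$, and then declare the case $i=j$ ``subsumed\ldots once the general intersection claim is in place.'' The fix is the one the paper uses: run the analysis for a third label $k\notin\{i,j\}$ occurring on the $\mathcal{F}$-path between the chords' endpoints; both chords are then non-$k$ and attach the two sides of the deleted $k$-edge, and this works equally well when $i=j$.

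The genuine gap is in part~(2). The paper's proof is short: since $b$ and $d$ lie in the same component of $\mathcal{F}$, there is a unique $\mathcal{F}$-path from $b$ to $d$; any edge of label $k\notin\{i,j\}$ on it yields the same same-orbit contradiction as in~(1), so the path can only use the (unique) $i$- and $j$-edges of $\mathcal{F}$, and the matching property of each label class ($b$ already lies on an $i$-edge, $d$ on a $j$-edge) forces the path to be exactly a $j$-edge followed by an $i$-edge, i.e.\ the alternating square. Your detour through $p_{i,j}=2$ leaves precisely this forcing step unproved: the tools you name --- at most four edges of labels $i,j$ in all of $\mathcal{G}$, and $\langle\rho_i,\rho_j\rangle\cap\Gamma_{i,j}=1$ --- do not by themselves exclude the component of $L$ in $\mathcal{G}_{\{i,j\}}$ being a path on three or four vertices (on which $\rho_i\rho_j$ has local order $3$ or $4$); nothing in the edge count or in that trivial intersection distinguishes such a path from a $4$-cycle. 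What excludes the path is, again, the fracture-graph argument applied to the tree-path from $b$ to $d$ --- that is, the paper's argument --- and you gesture at ``component/attachment constraints'' without deploying them, indeed flagging this step yourself as the unproved technical heart. Once that step is supplied, your derivation of the square from $p_{i,j}=2$, and the identification of its two new edges as $\mathcal{F}$-edges via part~(1), is correct, but it is then a longer road to the same place.
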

\begin{proof}
Suppose $\{a,b\}\cap\{c,d\}=\emptyset$. Then, by Lemma~\ref{lemma3.1}, there exists a $k$-edge $\{e,f\}$ of $\mathcal{F}$ with $i\neq k\neq j$ and with $e$ and $f$ in the same connected component of $\Gamma_k$ (see picture below), a contradiction.
$$ \xymatrix@-1pc{ *+[o][F]{e}   \ar@{.}[r] \ar@{-}[d]^k& *+[o][F]{a} \ar@{--}[r]^i  & *+[o][F]{b}  \ar@{.}[d]\\
*+[o][F]{f}   \ar@{.}[r]& *+[o][F]{c} \ar@{--}[r]^j & *+[o][F]{d}\ar@{.}[u]  }$$
Now suppose that $a=c$ and $b\neq d$. Then to avoid a contradiction as above, $\{a,b\}$ and $\{a,d\}$ must be edges of a $\{i,j\}$-square with $a$ in one component of $\mathcal{F}$ and the other vertices of the square in the other component as in the following figure.
$$ \xymatrix@-1pc{ & *+[o][F]{b}   \ar@{-}[dr]^j & \\
*+[o][F]{a}  \ar@{--}[ur]^i \ar@{--}[dr]_j& & *+[o][F]{}\ar@{-}[dl]^i \\
&*+[o][F]{d}&  }$$
\end{proof}
The following lemma shows that the distinguished generators of $\Gamma$ are either transpositions or 2-transpositions.
\begin{lemma}\label{distance}
Let $n\geq 9$. If $\mathcal G$ has at least two $i$-edges, then $\mathcal G$ has exactly two $i$-edges and the distance between a pair of $i$-edges of $\mathcal{G}$ is one.
\end{lemma}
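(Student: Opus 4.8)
The plan is to establish the two assertions separately, using only the fracture graph $\mathcal{F}$ and Lemma~\ref{connection}.

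\emph{Exactly two $i$-edges.} Since $\mathcal{F}$ contains exactly one edge of each label, at most one $i$-edge lies in $\mathcal{F}$. Any further $i$-edge lies in $\mathcal{G}\setminus\mathcal{F}$, but by Lemma~\ref{connection}(1) two distinct edges of $\mathcal{G}\setminus\mathcal{F}$ always carry different labels, so $\mathcal{G}\setminus\mathcal{F}$ holds at most one $i$-edge. Hence $\mathcal{G}$ has at most two $i$-edges, and exactly two under the hypothesis. I write them as $e_1\in\mathcal{F}$ and $e_2\in\mathcal{G}\setminus\mathcal{F}$, and set $\rho_i=\tau_1\tau_2$, where $\tau_1,\tau_2$ are the (vertex-disjoint) transpositions supported on $e_1,e_2$.

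\emph{Distance one, first case.} The graph $\mathcal{G}$ is connected (as $\Gamma=S_n$ is transitive) and $\mathcal{F}$ has two components by Lemma~\ref{lemma3.1}, so $\mathcal{G}\setminus\mathcal{F}$ contains at least the edge $e_2$. If it contains a second edge $e'$, then $e'$ has a label $m\neq i$ (by the first part) and meets $e_2$ by Lemma~\ref{connection}(1). Lemma~\ref{connection}(2) then realises $e_2$ and $e'$ as the two dashed sides of an alternating $\{i,m\}$-square whose other two sides lie in $\mathcal{F}$; the $i$-side of that square is an $i$-edge of $\mathcal{F}$, hence equals $e_1$, and it is joined to $e_2$ by the $m$-side of the square. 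Thus $e_1$ and $e_2$ are at distance one. (This is the configuration of family $(C)$.)

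\emph{Distance one, second case.} Otherwise $e_2$ is the only edge of $\mathcal{G}\setminus\mathcal{F}$, so $\mathcal{G}=\mathcal{F}\cup\{e_2\}$ is a connected graph on $n$ vertices with $n-1$ edges, i.e.\ a tree; moreover each of the $n-3$ labels other than $i$ occurs on exactly one edge, so every $\rho_j$ with $j\neq i$ is a transposition. Assume for contradiction that $e_1,e_2$ are at distance at least two, and let $b=u_0,u_1,\dots,u_\ell=c$ (with $\ell\geq 2$) be the tree-path joining an endpoint $b$ of $e_1$ to the nearest endpoint $c$ of $e_2$. Let $\rho_j=(b\,u_1)$ and $\rho_k=(u_{\ell-1}\,c)$ be the generators of the first and last path-edges; then $j\neq k$, and $\rho_j$ meets $e_1$ only in $b$ and is disjoint from $e_2$, while $\rho_k$ meets $e_2$ only in $c$ and is disjoint from $e_1$. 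Conjugating $\rho_i$ by $\rho_j$ gives $(a\,u_1)(c\,d)$, and $\rho_i\cdot(a\,u_1)(c\,d)=(a\,u_1\,b)$ is a $3$-cycle; hence $\langle\rho_i,\rho_j\rangle\supseteq S_{\{a,b,u_1\}}\ni\tau_1=(a\,b)$, and then $\tau_2=\rho_i\tau_1\in\langle\rho_i,\rho_j\rangle$. The same computation at the other end gives $\tau_1,\tau_2\in\langle\rho_i,\rho_k\rangle$. Therefore $\langle\rho_i,\rho_j\rangle\cap\langle\rho_i,\rho_k\rangle\supseteq\langle\tau_1,\tau_2\rangle$, which strictly contains $\langle\rho_i\rangle$; since $\{i,j\}\cap\{i,k\}=\{i\}$, this contradicts the intersection property~(\ref{ip}). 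Hence the distance is one. (This is the configuration of families $(A)$ and $(B)$.)

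The main obstacle is the second case. One must first notice that a single crossing edge forces $\mathcal{G}$ to be a tree whose other generators are all transpositions, and then isolate the precise pair of rank-two parabolic subgroups $\langle\rho_i,\rho_j\rangle$ and $\langle\rho_i,\rho_k\rangle$ whose oversized intersection breaks~(\ref{ip}). The computational crux is showing that each of these contains both detached transpositions $\tau_1$ and $\tau_2$; it is exactly here that distance one is harmless, because then the unique middle edge meets \emph{both} $i$-edges, and the extraction of a detached transposition no longer succeeds.
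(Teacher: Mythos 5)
Your first part (at most two $i$-edges) and your treatment of the tree case and the genuine-square case are fine, and in those cases your mechanism (extracting the detached transposition $\tau_1=(a\,b)$ inside both $\langle\rho_i,\rho_j\rangle$ and $\langle\rho_i,\rho_k\rangle$) is essentially the paper's argument for its cases (1) and (3). But your case split has a genuine gap: when $\mathcal{G}\setminus\mathcal{F}$ contains a second edge $e'$, you invoke Lemma~\ref{connection}(2) to produce an alternating square, yet that statement only applies when $e'$ and $e_2$ have \emph{different} vertex sets. Nothing you have proved excludes the possibility that $e'$ is parallel to $e_2$, i.e.\ that $\{c,d\}$ is a multiple edge of $\mathcal{G}$ carrying labels $i$ and $m$. (Multiple edges are only ruled out later, in Lemma~\ref{double}, whose proof \emph{uses} Lemma~\ref{distance}, so you cannot appeal to that.) In this parallel-edge configuration there is no square, your ``first case'' produces nothing, and your ``second case'' does not apply either since $\mathcal{G}$ is not a tree and $\rho_m$ is a $2$-transposition rather than a transposition, which breaks the conjugation computation at the $e_2$ end.

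This omitted configuration is exactly where the real work of the paper's proof lies: its cases (2) and (4) are the double-edge configurations, and case (4) (the two $i$-edges at distance two with $\{c,d\}$ doubled) is the hard one. There the paper must invoke $n\geq 9$ to guarantee a further generator $\rho_m$, constrain where its edge can attach, and finally verify by a {\sc Magma} computation that the intersection property fails for $\langle\rho_i,\rho_j,\rho_l,\rho_m\rangle$; no rank-two parabolic intersection suffices. The fact that your argument never uses the hypothesis $n\geq 9$ is the telltale sign that a case is missing. To repair the proof you would need to add a separate analysis of the configurations in which the second crossing edge doubles $e_2$, at distance two and at distance at least three from $e_1$.
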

\begin{proof}
Suppose $\{a,b\}$ and $\{c,d\}$ are $i$-edges of $\mathcal{G}$. By Lemma~\ref{connection} (a), there are at most two $i$-edges in $\mathcal G$. Let $\{a,b\}$ be the $i$-edge in $\mathcal{F}$. 
If the two $i$-edges are in a square, as in Lemma~\ref{connection} (2), then the distance between them is one, as wanted. 
Now consider that the $i$-edges are not in a square and are not at distance one. 
Suppose that $j$ and $l$ are labels of edges of a path from  $\{a,b\}$ to $\{c,d\}$, as follows. 
$$\xymatrix@-1pc{ *+[o][F]{a}   \ar@{-}[r]^i  & *+[o][F]{b} \ar@{-}[r]^l  & *+[o][F]{}\ar@{.}[r] &*+[o][F]{} \ar@{-}[r]^j &*+[o][F]{c} \ar@{--}[rr]^i&& *+[o][F]{d}}$$
By Lemmas~\ref{lemma3.1} and~\ref{connection}, there are four possibilities for the graph $\mathcal{G}_{i,j,l}$:
$$ (1)\quad \xymatrix@-1pc{ *+[o][F]{a}   \ar@{-}[r]^i  & *+[o][F]{b} \ar@{-}[r]^l  & *+[o][F]{} &*+[o][F]{} \ar@{-}[r]^j &*+[o][F]{c} \ar@{--}[rr]^i& &*+[o][F]{d}} \quad\mbox{ (2) }\quad\xymatrix@-1pc{ *+[o][F]{a}   \ar@{-}[r]^i  & *+[o][F]{b} \ar@{-}[r]^l  & *+[o][F]{}  &*+[o][F]{} \ar@{-}[r]^j &*+[o][F]{c} \ar@{==}[rr]^i_l& &*+[o][F]{d}}$$

$$(3) \quad \xymatrix@-1pc{ *+[o][F]{a}   \ar@{-}[r]^i  & *+[o][F]{b} \ar@{-}[r]^l  & *+[o][F]{} \ar@{-}[r]^j &*+[o][F]{c} \ar@{--}[rr]^i& &*+[o][F]{d}} \quad\mbox{ (4) }\quad\xymatrix@-1pc{ *+[o][F]{a}   \ar@{-}[r]^i  & *+[o][F]{b} \ar@{-}[r]^l  & *+[o][F]{e} \ar@{-}[r]^j &*+[o][F]{c} \ar@{==}[rr]^i_l& &*+[o][F]{d}}$$

In the first and third case $(a\,b)\in\langle \rho_i,\rho_j\rangle\cap \langle \rho_i,\rho_l\rangle $ but $(a\,b)\notin \langle \rho_i\rangle$, a contradiction with the intersection property. In the second case, $\langle \rho_i,\rho_j\rangle\cap \langle \rho_j,\rho_l\rangle$ contains at least an element of order 3, a contradiction with the intersection property again. 
In case (4), as $n\geq 9$, there must be at least another edge with label $m$ in the fracture graph.
Suppose the connected component of $\mathcal F$ containing $d$ contains such an edge $m$ starting at $d$. Then,
$\langle \rho_i,\rho_m\rangle\cap \langle \rho_m,\rho_l\rangle$ contains at least an element of order 3, a contradiction with the intersection property. Hence one of the two components of the fracture graph is a single vertex.
Moreover, we may assume that the edge of label $m$ in $\mathcal F$ connects to one of $a$, $b$, $c$ or $e$.
In addition the $m$-edge does not form a double edge with one of the existing edges. 
Also, from the beginning of the proof, we know that $\rho_m$ is either a transposition or a 2-transposition and in the latter case, $\rho_m$ swaps $c$ and $d$.
It is then easily checked with {\sc Magma} that the intersection property fails for any possible $\rho_m$ for the group $\langle \rho_i,\rho_j,\rho_l,\rho_m\rangle$.
\end{proof}

\begin{lemma}\label{double}
$\mathcal{G}$ is either a tree or has exactly one cycle that is an alternating square. 
\end{lemma}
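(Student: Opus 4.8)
The plan is to read off the structure of $\mathcal{G}$ from its spanning forest $\mathcal{F}$. By Lemma~\ref{lemma3.1}, $\mathcal{F}$ is a forest on the $n$ vertices with exactly two tree components, hence it has $n-2$ edges, one of each label. Since $\Gamma = S_n$ is transitive on the $n$ points, $\mathcal{G}$ is connected. Writing $t := |E(\mathcal{G})| - |E(\mathcal{F})|$ for the number of edges of $\mathcal{G}$ lying outside $\mathcal{F}$, the cyclomatic number of the connected graph $\mathcal{G}$ equals $(n-2+t) - n + 1 = t-1$. Thus the whole statement reduces to proving $t \le 2$: if $t = 1$ then $\mathcal{G}$ has $n-1$ edges and, being connected, is a tree; if $t = 2$ then $\mathcal{G}$ has a single independent cycle, and by Lemma~\ref{connection}(2) the two edges of $\mathcal{G}\setminus\mathcal{F}$ together with two fracture edges form an alternating square, which is then exactly that cycle. (The case $t=0$ is excluded because $\mathcal{F}$ is disconnected.)

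The heart of the argument, and the step I expect to be the main obstacle, is excluding $t \ge 3$. Suppose there are at least three edges in $\mathcal{G}\setminus\mathcal{F}$. By Lemma~\ref{lemma3.1} each of them crosses between the two components of $\mathcal{F}$, and by Lemma~\ref{connection}(1) any two of them share a vertex. A family of pairwise-intersecting edges is either a star or a triangle; a triangle is an odd cycle and so cannot have all three edges crossing between the two sides of the bipartition determined by the components of $\mathcal{F}$. Hence all these edges pass through a single vertex $v$, lying say in the first component, with the other endpoints in the second component.

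Now fix three such edges $e_k = \{v, a_k\}$ with labels $i_k$ ($k=1,2,3$) and distinct endpoints $a_1, a_2, a_3$. Applying Lemma~\ref{connection}(2) to each pair $\{e_k, e_l\}$ produces an alternating $\{i_k,i_l\}$-square on $v, a_k, a_l$ and a fourth vertex. Because the fracture graph contains exactly one edge of each label, matching up the $i_1$-, $i_2$- and $i_3$-fracture edges forced by the three squares pins down the fourth vertices and reveals that the edges involved form a $K_4$ on $\{v, a_1, a_2, a_3\}$ whose three perfect matchings carry the labels $i_1, i_2, i_3$; explicitly $\rho_{i_1} = (v\,a_1)(a_2\,a_3)$, $\rho_{i_2} = (v\,a_2)(a_1\,a_3)$, $\rho_{i_3} = (v\,a_3)(a_1\,a_2)$. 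Since each label $i_k$ already occurs on two edges, Lemma~\ref{distance} guarantees these double transpositions are the whole of $\rho_{i_k}$, so $\langle \rho_{i_1}, \rho_{i_2}, \rho_{i_3}\rangle$ is a Klein four-group with $\rho_{i_3} = \rho_{i_1}\rho_{i_2}$. Then $\langle \rho_{i_1}, \rho_{i_2}\rangle = \langle \rho_{i_1}, \rho_{i_3}\rangle$ while neither equals $\langle \rho_{i_1}\rangle$, contradicting the intersection property~(\ref{ip}). This gives $t \le 2$ and finishes the proof; the delicate part is precisely the bookkeeping that extracts the $K_4$ configuration from the three overlapping squares using the uniqueness of fracture edges per label.
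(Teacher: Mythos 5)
Your counting framework and the $K_4$/Klein-four contradiction for $t\ge 3$ are sound, but there is a genuine gap: you never exclude \emph{multiple edges}, i.e.\ two parallel edges of $\mathcal{G}$ joining the same pair of vertices and carrying different labels. Lemma~\ref{connection}(1) permits two edges of $\mathcal{G}\setminus\mathcal{F}$ to satisfy $\{a,b\}=\{c,d\}$, and Lemma~\ref{connection}(2) is explicitly conditional on $\{a,b\}\neq\{c,d\}$, so it says nothing in that case. (Both edges of a double edge necessarily lie outside $\mathcal{F}$, since a fracture edge has its endpoints in one component of $\mathcal{F}$ while a non-fracture edge must join the two components; so double edges do occur among the edges you are counting.) Consequently, in your case $t=2$ the unique cycle could a priori be a digon rather than an alternating square --- which would falsify the statement of the lemma --- and in your case $t\ge 3$ the pairwise-intersecting edges through $v$ need not have three distinct outer endpoints $a_1,a_2,a_3$, so the $K_4$ configuration need not arise.

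The paper's proof devotes roughly half its length to exactly this point: assuming $\{a,b\}$ carries both labels $i$ and $j$, Lemma~\ref{distance} forces a second $i$-edge and a second $j$-edge, each at distance one from the double edge, and in each of the resulting configurations one exhibits $(a\,b)\in\langle\rho_i,\rho_j,\rho_k\rangle\cap\langle\rho_i,\rho_j,\rho_l\rangle$ with $(a\,b)\notin\langle\rho_i,\rho_j\rangle$, violating the intersection property. You would need to supply such an argument (or some other exclusion of digons) before your reduction to $t\le 2$ yields the stated dichotomy. Apart from this omission, your route is genuinely different from, and in places more explicit than, the paper's: the paper simply asserts that a non-tree $\mathcal{G}$ has exactly one cycle and invokes Lemma~\ref{connection}(2), whereas your edge count, the star-versus-triangle analysis, and the Klein four-group contradiction make the exclusion of two independent cycles fully rigorous.
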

\begin{proof}
First observe that if
$\mathcal G$ is not a tree, it has exactly one cycle. By Lemma~\ref{connection} (2), this cycle must be an alternating square. Also, if
 $\mathcal{G}$ has a square then one vertex of the square is in one component of $\mathcal{F}$ and the other 3 are in the other component of   $\mathcal{F}$.
As two edges of the square are in $\mathcal{F}$ the square has no multiple edges, and it is unique.
Now suppose  that $\{a,b\}$ is a multiple edge of $\mathcal{G}$ and let $i$ and $j$ be two labels of  that edge. 
Then by Lemma~\ref{distance} there are two edges with labels $k$ and $l$ such that the graph $\mathcal{G}_{\{i,j,k,l\}}$ is one of the following:
$$ \xymatrix@-1pc{*+[o][F]{a}   \ar@{-}[r]^i& *+[o][F]{b}   \ar@{-}[r]^l  & *+[o][F]{c} \ar@{==}[r]^i_j  & *+[o][F]{d} \ar@{-}[r]^k &*+[o][F]{} \ar@{-}[r]^j &*+[o][F]{} } \quad\mbox{ or }\quad \xymatrix@-1.8pc{ *+[o][F]{c} \ar@{==}[rr]^i_j  && *+[o][F]{d} \ar@{-}[rr]^l \ar@{-}[drr]_k &&*+[o][F]{} \ar@{-}[rr]^i&&*+[o][F]{} \\
&&&&*+[o][F]{} \ar@{-}[drr]_j &&\\
&&&&&&*+[o][F]{} } $$
In both cases $(a\,b)\in \langle \rho_i,\rho_j,\rho_k\rangle\cap\langle \rho_i,\rho_j,\rho_l\rangle$ but $(a\,b)\notin \langle \rho_i,\rho_j\rangle$, a contradiction.
The rest follows from Lemma~\ref{connection}. 
\end{proof}

When $\mathcal{G}$ is a tree, let $1$ be the label of the edge between the two components of $\mathcal{F}$ and $0$ be the label of the unique edge between the two 1-edges. When 
 $\mathcal{G}$ is not a tree let $\{0,1\}$ be the labels of the alternating square, the unique cycle of $\mathcal{G}$.

\begin{lemma}\label{nosquare}
If  $\mathcal{G}$ is a tree, then $\mathcal{G}_{\{0,\ldots,r-1\}\setminus\{1\}}$ has two components, one of them having exactly two vertices.
\end{lemma}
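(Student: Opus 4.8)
The plan is to read everything off the tree $\mathcal{G}$ together with the local picture of the two $1$-edges that the numbering convention preceding the statement fixes. Since $\mathcal{G}$ is a tree it has $n-1$ edges, whereas the fracture graph $\mathcal{F}$ has one edge of each of the $n-2$ labels and, by Lemma~\ref{lemma3.1}, exactly two connected components; hence $\mathcal{G}\setminus\mathcal{F}$ is a single edge joining these two components, and this is the edge whose label is named $1$. As every label already occurs in $\mathcal{F}$, label $1$ occurs twice and every other label once, so $\rho_1$ is a $2$-transposition and the remaining generators are transpositions. By Lemma~\ref{distance} the two $1$-edges are at distance one, joined by the $0$-edge, so locally $\mathcal{G}$ contains a path on four vertices $a,b,c,d$ whose edges $\{a,b\}$, $\{b,c\}$, $\{c,d\}$ carry the labels $1$, $0$, $1$, with $\{a,b\}$ the edge of $\mathcal{G}\setminus\mathcal{F}$ and $\{c,d\}$ the $1$-edge of $\mathcal{F}$.

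The graph $\mathcal{G}_{\{0,\ldots,r-1\}\setminus\{1\}}$ is obtained from $\mathcal{G}$ by deleting the two $1$-edges, and its connected components are the orbits of $\Gamma_1$. Since the $0$-edge $\{b,c\}$ is not deleted, $b$ and $c$ lie in a common component, and the crux of the proof is to show that this component is exactly $\{b,c\}$, that is, that in $\mathcal{G}$ neither $b$ nor $c$ is incident to any edge other than the two $1$-edges and the joining $0$-edge. I would argue this by contradiction: suppose an edge of some label $m\notin\{0,1\}$ is incident to $b$ (the case of $c$ being symmetric). Then $\mathcal{G}_{\{0,1,m\}}$ consists of the path $a,b,c,d$ together with this extra $m$-edge at $b$; by Lemma~\ref{distance} the generator $\rho_m$ is a transposition or a $2$-transposition, and by Lemma~\ref{connection} two edges lying outside a fracture graph can only meet inside an alternating square, so only a short list of local shapes arises. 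In each of them, exactly as in the proof of Lemma~\ref{distance}, I would exhibit a permutation lying in an intersection $\langle\rho_i:i\in J\rangle\cap\langle\rho_i:i\in K\rangle$ but not in $\langle\rho_i:i\in J\cap K\rangle$ for suitable $J,K\subseteq\{0,1,m\}$, contradicting the intersection property~(\ref{ip}); the handful of remaining configurations are eliminated by the same finite {\sc Magma} check used at the end of the proof of Lemma~\ref{distance}.

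Once $b$ and $c$ are known to have degree two in $\mathcal{G}$, deleting the two $1$-edges detaches the pair $\{b,c\}$ from everything else, so $\{b,c\}$ is a component of $\mathcal{G}_{\{0,\ldots,r-1\}\setminus\{1\}}$ with exactly two vertices, namely the two endpoints of the $0$-edge. All the other vertices lie outside this pair and assemble into the complementary part of the forest obtained by removing the two bridges from the tree, giving the decomposition asserted in the statement in which one component is precisely the $0$-edge $\{b,c\}$; this distinguished two-vertex orbit of $\Gamma_1$ is exactly the feature exploited afterwards to match $\mathcal{G}$ against families (A) and (B).

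The main obstacle is the middle step: ruling out every possible extra edge at $b$ or $c$. The difficulty is purely combinatorial, not conceptual — one must be certain that the enumeration of the local shapes of $\mathcal{G}_{\{0,1,m\}}$, and of $\mathcal{G}_{\{0,1,m,m'\}}$ when a second repeated label could interfere, is exhaustive, and that each case is genuinely killed by a concrete failure of~(\ref{ip}). This is where Lemmas~\ref{connection} and~\ref{distance} carry the load, since between them they forbid any multiple edge outside the unique alternating square and force every repeated label to sit at distance one, cutting the analysis down to the bounded list of pictures that the intersection property then rules out.
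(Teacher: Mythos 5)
Your opening paragraph (the tree has a single non-fracture edge, so label $1$ is doubled, $\rho_1$ is a $2$-transposition, and by Lemma~\ref{distance} the two $1$-edges sit at distance one, joined by the $0$-edge into a path $a,b,c,d$) is correct and agrees with the paper's setup. The proof then breaks at what you call the crux: the claim that no edge of label $m\geq 2$ can be incident to $b$ or $c$ is simply false, so the violation of the intersection property~(\ref{ip}) that you intend to exhibit does not exist. Family (B) of Theorem~\ref{main} is exactly the configuration in which an endpoint of the $0$-edge carries, besides the $0$-edge and a $1$-edge, the label-$2$ edge attaching the path to the rest of the tree, and Lemma~\ref{Cgroup2} proves that this configuration is a genuine C-group of rank $n-2$ for $S_n$. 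Consequently no finite case analysis (and no {\sc Magma} check) can rule out a single extra edge at $b$ or $c$; a proof along your lines would either stall or ``prove'' something contradicting the main theorem. The paper's argument is structurally different: it assumes \emph{two} edges with labels $k,l\geq 2$ are incident to vertices of the $1$-edges, lists the possible shapes of $\mathcal{G}_{\{0,1,k,l\}}$, and in each shape finds $(a\,b)\in\langle\rho_0,\rho_1,\rho_k\rangle\cap\langle\rho_0,\rho_1,\rho_l\rangle$ while $(a\,b)\notin\langle\rho_0,\rho_1\rangle$, contradicting~(\ref{ip}). The conclusion is only that \emph{exactly one} edge of label $\geq 2$ meets $\{a,b,c,d\}$ (connectivity of the tree gives at least one), and that edge may sit at any of the four vertices: at $a$ or $d$ it yields family (A), at $b$ or $c$ it yields family (B). This is precisely the dichotomy Proposition~\ref{graph} needs.

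There is also a misreading of the statement that should have served as a warning sign. Deleting the two $1$-edges from a tree always produces exactly three components (deleting $k$ edges from a tree gives $k+1$ components), so the conclusion as you interpret it can never hold; the statement is to be read as deleting the single $0$-edge (equivalently, taking the orbits of $\Gamma_0$), and under that reading both families (A) and (B) do have exactly two components, one of which consists of the two endpoints of a $1$-edge. Your final paragraph compounds the error: even granting your crux, $\mathcal{G}$ minus the two $1$-edges would still have three components, and in family (B) the pair $\{b,c\}$ is not a component of that graph at all.
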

\begin{proof}
Suppose that $k,l\geq 2$ are labels of  two edges incident to the $1$-edges, then $\mathcal{G}_{\{0,1,k,l\}}$ is one of the following graphs:
$$ \xymatrix@-1pc{&&&\\*+[o][F]{}   \ar@{-}[r]^k& *+[o][F]{a}   \ar@{-}[r]^1 & *+[o][F]{b} \ar@{-}[r]^0  & *+[o][F]{} \ar@{-}[r]^1&*+[o][F]{} *+[o][F]{} \ar@{-}[r]^l &*+[o][F]{} }  
\quad \xymatrix@-1pc{&&*+[o][F]{}\ar@{-}[d]^k&\\
*+[o][F]{a}   \ar@{-}[r]^1& *+[o][F]{b}   \ar@{-}[r]^0& *+[o][F]{} \ar@{-}[r]^1 & *+[o][F]{} \ar@{-}[r]^l  & *+[o][F]{} }$$
$$\xymatrix@-1pc{&*+[o][F]{}\ar@{-}[d]_l&*+[o][F]{}\ar@{-}[d]^k&\\
*+[o][F]{a}   \ar@{-}[r]^1& *+[o][F]{b}   \ar@{-}[r]^0& *+[o][F]{} \ar@{-}[r]^1 & *+[o][F]{} } \quad
\xymatrix@-1pc{&*+[o][F]{}\ar@{-}[d]^k&&\\
*+[o][F]{a}   \ar@{-}[r]^1& *+[o][F]{b}   \ar@{-}[r]^0& *+[o][F]{} \ar@{-}[r]^1  & *+[o][F]{} \ar@{-}[r]^l  & *+[o][F]{} } $$
$$\xymatrix@-1pc{&*+[o][F]{}&&*+[o][F]{}\ar@{-}[d]^k\\
*+[o][F]{a}   \ar@{-}[r]^1& *+[o][F]{b}   \ar@{-}[r]^0& *+[o][F]{} \ar@{-}[r]^1 & *+[o][F]{}\ar@{-}[r]^l  & *+[o][F]{} } \quad
\xymatrix@-1pc{&*+[o][F]{}\ar@{-}[d]^k&&\\
*+[o][F]{a}   \ar@{-}[r]^1& *+[o][F]{b}   \ar@{-}[r]^0& *+[o][F]{} \ar@{-}[r]^1 & *+[o][F]{} \\
&*+[o][F]{}\ar@{-}[u]^l&&} $$
In any case $(a\,b)\in \langle \rho_0,\rho_1,\rho_k\rangle\cap\langle \rho_0,\rho_1,\rho_l\rangle$ but $(a\,b)\notin \langle \rho_0,\rho_1\rangle$, a contradiction. 
\end{proof}

\begin{lemma}\label{square}
If $\mathcal{G}$ has a square, then three vertices of the square have degree 2 in $\mathcal G$ and the fourth vertex has degree 3.
\end{lemma}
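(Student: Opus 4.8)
Write the four vertices of the unique square as $v_1,v_2,v_3,v_4$, with the two $0$-edges being $\{v_1,v_2\}$ and $\{v_3,v_4\}$ and the two $1$-edges being $\{v_2,v_3\}$ and $\{v_1,v_4\}$, as dictated by the labelling convention together with Lemma~\ref{double}. Since the square already carries two $0$-edges and two $1$-edges, Lemma~\ref{distance} tells us that $\rho_0$ and $\rho_1$ have exactly these edges, so $\rho_0=(v_1\,v_2)(v_3\,v_4)$ and $\rho_1=(v_1\,v_4)(v_2\,v_3)$; a direct check shows that $\langle\rho_0,\rho_1\rangle$ is a Klein four-group acting regularly, in particular transitively, on $\{v_1,v_2,v_3,v_4\}$. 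Each square vertex thus already has degree $2$ from the square, so the plan is to prove that exactly one of them carries one further edge while the other three carry none.

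First I would record that every $\rho_k$ with $k\geq 2$ is a transposition. Indeed $\mathcal G$ is connected because $S_n$ is transitive, and by Lemma~\ref{double} it has exactly one cycle, hence exactly $n$ edges; the labels $0$ and $1$ use $4$ of these, so the remaining $n-4$ labels contribute $n-4$ edges in total, and as each contributes at least one, each contributes exactly one. Consequently any edge incident to the square with a label $k\geq 2$ has the form $\{v_i,w\}$ for a single vertex $w$, and $w$ cannot itself be a vertex of the square, since such a chord would produce a second cycle, contradicting Lemma~\ref{double}. Thus $w$ lies outside the square and $\rho_k=(v_i\,w)$ fixes the set $\{v_1,v_2,v_3,v_4\}$ except for moving $v_i$ to $w$. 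Moreover, since the generators $\rho_0,\rho_1$ move only $v_1,v_2,v_3,v_4$, connectivity together with $n\geq 9$ forces at least one edge of some label $k\geq 2$ to meet the square; otherwise the four square vertices would form a component of their own.

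The heart of the proof is to show that \emph{at most} one label $k\geq 2$ has an edge meeting the square. I would argue by contradiction: suppose two distinct labels $k\neq l$ (both at least $2$) give edges $\{v_i,w\}$ and $\{v_j,u\}$ meeting the square, so that $\rho_k=(v_i\,w)$ and $\rho_l=(v_j\,u)$ with $w,u$ outside the square. Conjugating $\rho_k$ by the elements of $\langle\rho_0,\rho_1\rangle$, which permute $v_1,v_2,v_3,v_4$ transitively while fixing $w$, yields all four transpositions $(v_1\,w),(v_2\,w),(v_3\,w),(v_4\,w)$; these generate the symmetric group on $\{v_1,v_2,v_3,v_4,w\}$, so in particular $\langle\rho_0,\rho_1,\rho_k\rangle$ contains $S_4$ on $\{v_1,v_2,v_3,v_4\}$. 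The same reasoning places a copy of $S_4$ on $\{v_1,v_2,v_3,v_4\}$ inside $\langle\rho_0,\rho_1,\rho_l\rangle$. Hence $\langle\rho_0,\rho_1,\rho_k\rangle\cap\langle\rho_0,\rho_1,\rho_l\rangle$ contains a subgroup of order $24$, whereas the intersection property~(\ref{ip}) forces this intersection to be $\langle\rho_0,\rho_1\rangle$, of order $4$ — a contradiction.

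Combining the two observations, exactly one label $k\geq 2$ meets the square; since $\rho_k$ is a transposition, this happens along a single edge incident to a single vertex $v_i$. Therefore $v_i$ has degree $3$ and the other three square vertices have degree $2$, as claimed. The only delicate point — and the step I expect to be the main obstacle — is the conjugation argument: one must notice that the regular action of the Klein four-group inflates a single pendant transposition into all of $S_4$ on the square, so that two such pendant labels would make the two rank-three parabolics share far more than $\langle\rho_0,\rho_1\rangle$. Everything else is bookkeeping with Lemmas~\ref{lemma3.1}, \ref{connection} and~\ref{double}.
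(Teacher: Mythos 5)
Your proof is correct and follows essentially the same route as the paper: both derive a contradiction with the intersection property by showing that two pendant edges with distinct labels $k\neq l$ would force $\langle\rho_0,\rho_1,\rho_k\rangle\cap\langle\rho_0,\rho_1,\rho_l\rangle$ to be strictly larger than $\langle\rho_0,\rho_1\rangle$. The paper does this by a short case analysis on where the two edges attach and exhibits a single transposition $(a\,b)$ in the intersection, whereas your conjugation by the Klein four-group produces all of $S_4$ on the square uniformly in one stroke; your additional bookkeeping (each $\rho_k$ with $k\geq 2$ is a transposition, no chords, connectivity forces at least one pendant edge) is correct and is left implicit in the paper.
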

\begin{proof}
Suppose that there are two edges with labels $k$ and $l$ incident to the square, then we have the following possibilities for   $\mathcal{G}_{i,j,k,l}$
$$ \xymatrix@-1pc{ &&\\*+[o][F]{a}   \ar@{-}[r]^i \ar@{-}[d]_j  & *+[o][F]{b} \ar@{-}[r]^l \ar@{-}[d]^j  & *+[o][F]{} \\
*+[o][F]{}   \ar@{-}[r]_i  & *+[o][F]{} \ar@{-}[r]_k& *+[o][F]{}}\quad 
\xymatrix@-1pc{ &&&\\&*+[o][F]{a}   \ar@{-}[r]^i \ar@{-}[d]_j  & *+[o][F]{b} \ar@{-}[r]^l \ar@{-}[d]^j  & *+[o][F]{} \\
*+[o][F]{}   \ar@{-}[r]_k &*+[o][F]{}   \ar@{-}[r]_i  & *+[o][F]{} & }\quad \xymatrix@-1pc{ &*+[o][F]{} *+[o][F]{} \ar@{-}[d]^k&\\  *+[o][F]{a} \ar@{-}[r]^i \ar@{-}[d]_j  & *+[o][F]{b} \ar@{-}[r]^l \ar@{-}[d]^j  & *+[o][F]{} \\
*+[o][F]{}   \ar@{-}[r]_i  & *+[o][F]{} &}$$
In any case $(a\,b)\in \langle \rho_i,\rho_j,\rho_k\rangle\cap\langle \rho_i,\rho_j,\rho_l\rangle$ but $(a\,b)\notin \langle \rho_i,\rho_j\rangle$, a contradiction. 
\end{proof}

\begin{proposition}\label{graph}
If $\Gamma_i$ is intransitive for all $i\in\{0,\ldots, n-3\}$, then, up to a renumbering of the generators,  the permutation representation graph $\mathcal{G}$  of $\Gamma$
is one of the following graphs where $\mathcal{G}_{\{2,\ldots,n-3\}}$ is a tree.
$$(A)\xymatrix@-1.3pc{&&&&&&&\\*+[o][F]{}   \ar@{-}[r]^1& *+[o][F]{}   \ar@{-}[r]^0 & *+[o][F]{} \ar@{-}[r]^1& *+[o][F]{}\ar@{-}[r] ^2& *+[o][F]{}\ar@{.}[r]\ar@{.}[d]\ar@{.}[u]& *+[o][F]{}\ar@{-}[r]^i\ar@{.}[d]\ar@{.}[u]& *+[o][F]{}\ar@{.}[r] \ar@{.}[d]\ar@{.}[u]&*+[o][F]{}\ar@{.}[d]\ar@{.}[u]\\
&&&&&&&} \quad  (B) \xymatrix@-1.3pc{&&*+[o][F]{}  &&&&&\\*+[o][F]{}   \ar@{-}[r]^1& *+[o][F]{}   \ar@{-}[r]^0 & *+[o][F]{} \ar@{-}[u]_1\ar@{-}[r]_2 & *+[o][F]{}\ar@{.}[r]\ar@{.}[d]\ar@{.}[u]& *+[o][F]{}\ar@{-}[r]^i\ar@{.}[d]\ar@{.}[u]& *+[o][F]{}\ar@{.}[r] \ar@{.}[d]\ar@{.}[u]&*+[o][F]{}\ar@{.}[d]\ar@{.}[u]\\
&&&&&&&} (C) \xymatrix@-1.3pc{ *+[o][F]{}   \ar@{-}[r]^0\ar@{-}[d]_1  & *+[o][F]{} \ar@{-}[d]^1&&&&&& \\
*+[o][F]{}   \ar@{-}[r]_0  & *+[o][F]{} \ar@{-}[r]_2 & *+[o][F]{}\ar@{.}[r]\ar@{.}[d]\ar@{.}[u]& *+[o][F]{}\ar@{-}[r]^i\ar@{.}[d]\ar@{.}[u]& *+[o][F]{}\ar@{.}[r] \ar@{.}[d]\ar@{.}[u]&*+[o][F]{}\ar@{.}[d]\ar@{.}[u]\\
&&&&&&&} $$
\end{proposition}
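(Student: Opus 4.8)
The plan is to pin down $\mathcal{G}$ by feeding the global dichotomy of Lemma~\ref{double} into the local constraints supplied by Lemmas~\ref{distance}, \ref{square} and \ref{nosquare}, treating the tree case and the square case separately. First I would record an edge count. The label set is $\{0,\dots,n-3\}$, of size $n-2$, and by Lemma~\ref{distance} each label labels either one or two edges of $\mathcal{G}$. A tree on $n$ vertices has $n-1$ edges, so in the tree case exactly one label is repeated; a connected graph on $n$ vertices with a single cycle has $n$ edges, so in the square case exactly two labels are repeated. Using the labelling fixed just before Lemma~\ref{nosquare}, the repeated label in the tree case is $1$, and the two repeated labels in the square case are the labels $0,1$ of the alternating square. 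In both cases the remaining labels $2,\dots,n-3$ occur exactly once, so $\rho_2,\dots,\rho_{n-3}$ are transpositions and $\mathcal{G}_{\{2,\dots,n-3\}}$ has $n-4$ edges.

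Consider first the case where $\mathcal{G}$ is a tree. Then $\rho_1$ is the unique $2$-transposition, its two $1$-edges sitting at distance one and joined by the $0$-edge. The content of Lemma~\ref{nosquare} is that deleting the relevant edge splits off a component with exactly two vertices; after renumbering this yields a path in which the first $1$-edge $\{a,b\}$ is a pendant ($a$ a leaf, $b$ of degree two), the $0$-edge is $\{b,c\}$, and the second $1$-edge is $\{c,x\}$ for some vertex $x$. It then remains to place the edges of labels $2,\dots,n-3$. None can meet $a$ or $b$, since that would enlarge the two-vertex component, so each attaches at $c$ or at $x$. The key point is that Lemma~\ref{nosquare} forbids having a label $\geq 2$ incident to $c$ and simultaneously a label $\geq 2$ incident to $x$ (this is one of its excluded $\{0,1,k,l\}$-configurations). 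Hence either all such edges attach at $x$, leaving $c$ of degree two, giving graph $(A)$ with $x=d$; or all of them attach at $c$, forcing $x$ to be a leaf, giving graph $(B)$. In either subcase the labels $2,\dots,n-3$ span a tree on $n-3$ vertices, acyclic by Lemma~\ref{double} and connected as the remainder of $\mathcal{G}$.

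In the remaining case $\mathcal{G}$ carries the alternating square of Lemma~\ref{double} on labels $\{0,1\}$, so both $\rho_0$ and $\rho_1$ are $2$-transpositions. By Lemma~\ref{square} exactly one of the four vertices of the square, say $s$, has degree three while the other three have degree two. Thus the square has a single exit at $s$, and every edge of labels $2,\dots,n-3$ lies in the part of $\mathcal{G}$ attached there. Since the square is the only cycle of $\mathcal{G}$, these $n-4$ edges are acyclic, and being the connected remainder rooted at $s$ they form a tree on $n-3$ vertices; together with the three degree-two vertices of the square this is precisely graph $(C)$.

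I expect the main obstacle to be the tree case, and within it the step forcing every edge of label $\geq 2$ to emanate from a single vertex. This is exactly where the hypothesis $n\geq 9$ is consumed, through Lemma~\ref{nosquare} and the intersection-property computations behind Lemmas~\ref{connection} and~\ref{distance}: one must check that no distribution of these edges over the four special vertices $a,b,c,x$ survives except the two giving $(A)$ and $(B)$, and that no extra edge can be incident to $a$, $b$, or a degree-two vertex of the square. The remaining work is bookkeeping: confirming that the conventions fixing the roles of $0$ and $1$ match the three displayed diagrams, that $(A)$, $(B)$, $(C)$ exhaust the possibilities up to renumbering, and that $\mathcal{G}_{\{2,\dots,n-3\}}$ is in each case the asserted tree.
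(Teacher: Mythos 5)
Your write-up follows exactly the paper's route: the paper's own proof of this proposition is a one-line appeal to Lemmas~\ref{double}, \ref{nosquare} and \ref{square}, and what you have written is essentially the intended assembly of those lemmas, together with the label/edge count supplied by Lemma~\ref{distance}. The square case and the general bookkeeping (which labels repeat, why $\rho_2,\ldots,\rho_{n-3}$ are transpositions, why the remaining edges form a tree on $n-3$ vertices) are correct.

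One step in the tree case is under-justified as written. Having normalised the path to $a$--$1$--$b$--$0$--$c$--$1$--$x$ with $a$ pendant, you invoke only the exclusion of a label $\geq 2$ at $c$ \emph{simultaneously} with one at $x$, and conclude that ``all such edges attach at $x$'' (resp.\ at $c$), ``giving graph (A)'' (resp.\ (B)). Taken literally this still allows two or more edges of label $\geq 2$ to be directly incident to $x$ (or to $c$), which is not graph (A) -- whose fourth vertex has degree two -- nor graph (B) -- where $c$ carries exactly one edge of label $\geq 2$. What closes this is that the forbidden $\{0,1,k,l\}$-configurations in the proof of Lemma~\ref{nosquare} also include two edges of label $\geq 2$ meeting the \emph{same} vertex of the path (its fifth and sixth pictures, together with the reflection symmetry $a\leftrightarrow x$, $b\leftrightarrow c$); hence exactly one edge of label $\geq 2$ meets $\{a,b,c,x\}$, and the entire remaining tree hangs off that single edge. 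With that observation the four possible attachment vertices reduce by symmetry to the two diagrams (A) and (B), and your argument is complete. (A cosmetic point: the two-vertex component in the statement of Lemma~\ref{nosquare} is the one spanned by the $0$-edge $\{b,c\}$ in $\mathcal{G}_{I\setminus\{1\}}$, not the pair $\{a,b\}$; this does not affect the substance of what you do.)
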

\begin{proof}
This is a consequence of Lemmas~\ref{double}, \ref{nosquare} and \ref{square}.
\end{proof}

\section{Proof of Theorem~\ref{main}} \label{proof}

Let $n\geq 9$ and $\Gamma:= (S_n, \{\rho_0,\ldots, \rho_{n-3}\})$ be a C-group of rank $n-2$.
We observe that for $n=8$ there is a C-group having one maximal parabolic subgroup that is transitive, generated by the following set of involutions.$$\{(1\, 2),\,(1\, 2)(3\, 4),\,(1\, 2)(7\, 8),\,(1\, 2)(5\, 6),\,(1\, 3)(6\, 8),\,(1\, 8)(3\, 6)\}$$ This C-group also yields a hypertope. In this case the permutation representation graph is different from those given in Theorem~\ref{main}. 

Now suppose that $n\geq 9$. By Lemmas~\ref{pri<=n-4}, \ref{neverAnM12AGL} and \ref{imp}, all $\Gamma_i$'s are intransitive.  Thus by Proposition~\ref{graph} the permutation representation  graph of $\Gamma$ is one of the three possibilities given in Theorem~\ref{main}. It remains to prove that any group generated by involutions having one of these three graphs as permutation representation graph gives a C-group of rank $n-2$ and also a regular hypertope of rank $n-2$ for the symmetric group $S_n$. By Proposition~\ref{FTcgroup} we just need to prove that the groups satisfy the intersection property (and hence are C-groups) and that  the corresponding coset geometries are flag-transitive. 

We first focus on the intersection property.
In order to prove it, we need a slightly different result than Proposition 2E16 of~\cite{arp}.

\begin{proposition}\label{2E16}
Let $G$ be a group generated by $r$ involutions $g_0, \ldots, g_{r-1}$.
Suppose that every maximal parabolic subgroup $G_i$ is a C-group. Then $G$ is a C-group if and only if $G_i \cap G_j = G_{i,j}$ for all $0\leq i,j \leq r-1$.
\end{proposition}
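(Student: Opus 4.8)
The plan is to prove the two directions separately, with all the substance in the ``if'' direction. Throughout, write $N=\{0,\ldots,r-1\}$ and, for $J\subseteq N$, abbreviate $H_J:=\langle g_j\mid j\in J\rangle$, so that the intersection property~(\ref{ip}) for $G$ reads $H_J\cap H_K=H_{J\cap K}$ for all $J,K\subseteq N$, while $G_i=H_{N\setminus\{i\}}$ and $G_{i,j}=H_{N\setminus\{i,j\}}$. The forward implication is then immediate: if $G$ is a C-group, specialising~(\ref{ip}) to $J=N\setminus\{i\}$, $K=N\setminus\{j\}$ gives $G_i\cap G_j=H_{N\setminus\{i,j\}}=G_{i,j}$, and restricting~(\ref{ip}) to subsets of $N\setminus\{i\}$ shows that each $G_i$ is itself a C-group.

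For the converse, assume each $G_i$ is a C-group and that $G_i\cap G_j=G_{i,j}$ for all $i\neq j$; the goal is $H_J\cap H_K=H_{J\cap K}$ for all $J,K$, where only the inclusion $\subseteq$ requires argument. I would split on $J\cup K$. If $J\cup K\neq N$, choose $i\notin J\cup K$; then $H_J,H_K\le G_i$, and the intersection property of the C-group $G_i$ gives the identity at once. If $J\cup K=N$ with $J=N$ or $K=N$ the identity is trivial, so the remaining case is $J\cup K=N$ with $J,K\subsetneq N$; here one may pick $j\in J\setminus K$ and $i\in K\setminus J$, which are distinct.

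The key computation is then as follows. Since $J\subseteq N\setminus\{i\}$ and $K\subseteq N\setminus\{j\}$, we have $H_J\le G_i$ and $H_K\le G_j$, whence
\[ H_J\cap H_K\ \le\ G_i\cap G_j\ =\ G_{i,j}, \]
which is the one place the pairwise hypothesis is used. Consequently $H_J\cap H_K=(H_J\cap G_{i,j})\cap(H_K\cap G_{i,j})$. Working inside the C-group $G_i$, its intersection property yields $H_J\cap G_{i,j}=H_{J\cap(N\setminus\{i,j\})}=H_{J\setminus\{j\}}$ (as $i\notin J$), and working inside $G_j$ gives $H_K\cap G_{i,j}=H_{K\setminus\{i\}}$. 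Both of these lie in $G_{i,j}$, so applying the intersection property of $G_{i,j}$ and noting $(J\setminus\{j\})\cap(K\setminus\{i\})=J\cap K$ (because $i\notin J$ and $j\notin K$) produces $H_J\cap H_K=H_{J\cap K}$, as desired.

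The one genuine subtlety, and the step I expect to carry the weight, is recognising that $G_{i,j}$ is a \emph{bona fide} C-group whose intersection property may be invoked: it is precisely the maximal parabolic of the C-group $G_i$ obtained by deleting $g_j$, so it inherits the property. Once this is secured, the pairwise hypothesis serves only to trap $H_J\cap H_K$ inside $G_{i,j}$, after which the entire argument descends into $G_{i,j}$ and the set-theoretic bookkeeping (such as $J\cap(N\setminus\{i,j\})=J\setminus\{j\}$) is routine. I note in particular that no induction on $r$ is required, since every reduction lands in a subgroup already assumed to be a C-group.
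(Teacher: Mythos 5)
Your proof is correct, and it is organised differently from the one in the paper. The paper's argument for the nontrivial direction is essentially a citation: it invokes the strategy of Proposition 2E16 of McMullen--Schulte, namely that the full intersection property follows once one knows $G_K=\bigcap_{j\in K}G_j$ for every $K\subseteq\{0,\ldots,r-1\}$, and then asserts that this identity ``follows immediately'' from the hypothesis $G_i\cap G_j=G_{i,j}$ (in truth that step still needs a short induction on $|K|$ using the C-group property of the $G_i$, which the paper leaves implicit). You bypass that intermediate identity entirely: you verify $H_J\cap H_K=H_{J\cap K}$ for arbitrary $J,K$ directly, splitting on whether $J\cup K$ is all of $\{0,\ldots,r-1\}$, and in the only hard case you use the pairwise hypothesis once to trap $H_J\cap H_K$ inside $G_{i,j}$, then finish with three applications of the intersection property of $G_i$, $G_j$ and $G_{i,j}$ (the last being legitimate, as you note, because a maximal parabolic of a C-group is again a C-group). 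The two proofs use the same ingredients at the same pressure points --- the pairwise condition to get into a rank-$(r-2)$ parabolic, and the C-group property of the parabolics to finish --- but yours is self-contained and makes explicit exactly the bookkeeping the paper delegates to the external reference, at the cost of a slightly longer case analysis; the paper's version is shorter but genuinely incomplete as written without consulting the cited source.
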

\begin{proof}
Obviously, if $G_i \cap G_j \neq G_{i,j}$ for some $i,j$, then $G$ is not a C-group.
As pointed out in the proof of~\cite[Proposition 2E16]{arp}, to prove the intersection property, it suffices to show that
\[
G_K :=  \langle g_k| k \not\in K \rangle = \bigcap\{G_j | j \in K\}
\]
This follows immediately from the hypothesis that $G_i \cap G_j = G_{i,j}$ for all $0\leq i,j \leq r-1$.
\end{proof}
\begin{lemma}\label{Cgroup1}
The permutation representation graph (A) gives a rank $n-2$ C-group isomorphic to $S_n$.
\end{lemma}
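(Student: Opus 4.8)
The plan is to invoke Proposition~\ref{2E16}: once $G\cong S_n$ is established, it remains to show that every maximal parabolic $G_i$ is a C-group and that $G_i\cap G_j=G_{i,j}$ for all $i,j$. I would argue by induction on $n$, the (small) base cases being checked directly. Label the vertices of graph~(A) by $1,\dots,n$ so that $\rho_0=(2\,3)$, $\rho_1=(1\,2)(3\,4)$, and $\rho_2,\dots,\rho_{n-3}$ are the transpositions carried by the edges of a tree on $\{4,\dots,n\}$, with $\rho_2=(4\,5)$.

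First I would prove $G\cong S_n$. As the tree-edges form a tree, $\langle\rho_2,\dots,\rho_{n-3}\rangle=\mathrm{Sym}\{4,\dots,n\}$. Conjugating this subgroup by $\rho_1$, which sends $4\mapsto 3$ and fixes $5,\dots,n$, yields $\mathrm{Sym}\{3,5,6,\dots,n\}$; the two supports overlap in $\{5,\dots,n\}\neq\emptyset$ and cover $\{3,4,\dots,n\}$, so together they generate $\mathrm{Sym}\{3,\dots,n\}$. Hence $(3\,4)\in G$, so $\rho_1(3\,4)=(1\,2)\in G$, and adjoining $\rho_0=(2\,3)$ links the points $1,2,3$ to $\mathrm{Sym}\{3,\dots,n\}$ to give $G=S_n$. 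This is precisely the step that needs $n$ large: it breaks down when the tree part has a single vertex.

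Next I would show each $G_i$ is a C-group. For $i\geq 1$ the two $1$-edges of $\rho_1$ are not pulled apart --- for $i\geq 2$ they stay bridged by the $0$-edge, and for $i=1$ the generator $\rho_1$ is itself removed --- so the representation graph of $G_i$ (graph~(A) minus its $i$-edges) breaks into components on which the surviving generators have pairwise disjoint supports; thus $G_i$ is a direct product of the component groups, and a direct product of C-groups with disjoint supports is again a C-group. Each factor is a symmetric group (a tree of transpositions by Theorem~\ref{CaCa}, or a smaller copy of graph~(A) by the induction hypothesis) or the dihedral group $\langle(2\,3),(1\,2)(3\,4)\rangle\cong D_8$ coming from a degenerate component on $\{1,2,3,4\}$; all of these are Coxeter groups. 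The single exceptional parabolic is $G_0$: deleting the $0$-edge separates the two $1$-edges, so $\rho_1$ straddles the components $\{1,2\}$ and $\{3,\dots,n\}$ and $G_0$ is not a naive direct product. Here a short direct argument --- conjugate $\mathrm{Sym}\{4,\dots,n\}$ by $\rho_1$ to get $(3\,4)$, hence $(1\,2)=\rho_1(3\,4)$ --- identifies $G_0$ with the Young subgroup $\mathrm{Sym}\{1,2\}\times\mathrm{Sym}\{3,\dots,n\}$, whose intersection property reduces to Theorem~\ref{CaCa} applied on $\{3,\dots,n\}$ together with the split-off $\mathrm{Sym}\{1,2\}$.

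Finally I would verify $G_i\cap G_j=G_{i,j}$; since $G_{i,j}\subseteq G_i\cap G_j$ always, only the reverse inclusion needs work, and this is where I expect the main difficulty. The cleanest pair is $(0,1)$: an element of $G_0=\mathrm{Sym}\{1,2\}\times\mathrm{Sym}\{3,\dots,n\}$ lying in $G_1$ must fix $1$, hence $2$, hence $3$, and so lands in $\mathrm{Sym}\{4,\dots,n\}=G_{0,1}$. For the remaining pairs I would compare the orbit partitions of $G_i$ and $G_j$: because graph~(A) is a \emph{tree}, two vertices lie in the same component of the graph with both the $i$- and $j$-edges deleted exactly when they share a block of each partition, so the common refinement of the two orbit partitions can be read straight off the graph. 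The genuine obstacle is that a parabolic need not be a Young subgroup --- the degenerate $D_8$-factor above, and the straddling of $\rho_1$ in $G_0$, mean one cannot simply quote ``the intersection of two Young subgroups is a Young subgroup.'' The crux is therefore to show that the extra elements living in such a $D_8$- or diagonal-factor of $G_i$ never simultaneously lie in $G_j$; I expect to settle the finitely many offending configurations using the explicit conjugation identities above, with a direct (possibly computer-assisted) check for the smallest overlaps.
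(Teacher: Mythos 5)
Your framework is the same as the paper's (invoke Proposition~\ref{2E16}, induct on $n$, identify each $\Gamma_i$ as a direct product of smaller C-groups, then verify $\Gamma_i\cap\Gamma_j=\Gamma_{i,j}$ pairwise), and your identifications of $G$, of the parabolics, and of the pair $(0,1)$ are fine. But the proof is not complete: the pairwise intersection step, which you yourself flag as ``where I expect the main difficulty,'' is exactly the content of the lemma, and you leave the hard cases unresolved. Two concrete problems. First, the orbit-refinement argument only shows that $\Gamma_i\cap\Gamma_j$ lies inside the Young subgroup on the common refinement of the two orbit partitions; this is strictly weaker than what is needed whenever $\Gamma_{i,j}$ is not the full Young subgroup on its orbits, which happens precisely for the pairs involving the $D_8$-component on the four left-hand vertices (e.g.\ $\Gamma_{2,l}\cong D_8\times S_{n_1-4}\times S_{n_2}$ inside $S_4\times S_{n_1-4}\times S_{n_2}$). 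Second, deferring ``the finitely many offending configurations'' to a possibly computer-assisted check does not work, because the offending groups grow with $n$: the critical pair is $(0,2)$, where $\Gamma_{0,2}\cong 2\times S_{n-4}$ has index $4$ in $\Gamma_2\cong D_8\times S_{n-4}$ and is maximal in neither $\Gamma_0$ nor $\Gamma_2$, so no finite verification and no maximality argument closes it.

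The paper disposes of almost all pairs by a uniform maximality argument you do not use: it computes $\Gamma_{i,j}$ and shows it is a \emph{maximal} subgroup of one of $\Gamma_i,\Gamma_j$; since $\Gamma_{i,j}\leq\Gamma_i\cap\Gamma_j\leq\Gamma_i$ and $\Gamma_i\cap\Gamma_j=\Gamma_i$ would force $\Gamma_i\subseteq\Gamma_j$ and hence $\Gamma_j=S_n$ (impossible, as $\Gamma_j$ is intransitive), equality follows. For the one pair where maximality fails, $(0,2)$, the paper argues directly inside $D_8$: any subgroup strictly between $2\times S_{n-4}$ and $D_8\times S_{n-4}$ must contain a second reflection or the central symmetry of the square, hence be transitive on the four square-vertices, contradicting the intransitivity of $\Gamma_0$ there. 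You would need to supply this argument (or an equivalent) to close the gap; as written, the crux of the lemma is asserted rather than proved.
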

\begin{proof}
By Proposition~\ref{2E16}, and using induction on $n$, it is sufficient to prove that  $\Gamma_i\cap \Gamma_j=\Gamma_{i,j}$ for every $i,j\in I$. Indeed, all $\Gamma_i$'s will either be C-groups of rank $n-3$ for $S_{n-1}$ or direct products of two smaller groups that are obviously C-groups.

$\Gamma_{0,1}=\Gamma_0\cap\Gamma_1$: we have $\Gamma_{0,1}\cong S_{n-3}$, thus $\Gamma_{0,1}$ is a maximal subgroup of $\Gamma_1\cong  2\times S_{n-3}$;

$\Gamma_{0,2}=\Gamma_0\cap\Gamma_2$: we have that $\Gamma_{0,2}\cong 2\times S_{n-4}$, and $\Gamma_2\cong  D_8\times S_{n-4}$. Suppose $\Gamma_0 \cap \Gamma_2 \neq \Gamma_{0,2}$. Then $\Gamma_0\cap \Gamma_2$ must be a proper subgroup of $\Gamma_0$ and a proper subgroup of $\Gamma_2$ containing $\Gamma_{0,2}$. The involution $\rho_1$ corresponds to a fixed-point-free reflection of a square in $D_8$. The only possibility is then to have another reflection or a central symmetry in $\Gamma_0\cap \Gamma_2$. But then $\Gamma_0\cap \Gamma_2$ is transitive on the four vertices of the square which is impossible as $\Gamma_0$ is not. Hence $\Gamma_0 \cap \Gamma_2 = \Gamma_{0,2}$
 
$\Gamma_{0,l}=\Gamma_0\cap\Gamma_l$ for $l\geq 3$: we have that $\Gamma_{0,l}\cong 2\times S_{n_1-2}\times  S_{n_2}$ with $n_1+n_2=n$, thus $\Gamma_{0,l}$ is a maximal subgroup of $\Gamma_0\cong  2\times S_{n-2}$;

$\Gamma_{1,2}=\Gamma_1\cap\Gamma_2$:  we have $\Gamma_{1,2}\cong 2\times S_{n-4}$, thus $\Gamma_{1,2}$  is a maximal subgroup of $\Gamma_1\cong  2\times S_{n-3}$;

$\Gamma_{1,l}=\Gamma_1\cap\Gamma_l$ for $l\geq 3$:  we have that $\Gamma_{1,l}\cong 2\times S_{n_1-3}\times  S_{n_2}$ with $n_1+n_2=n$, thus $\Gamma_{1,l}$ is a maximal subgroup of $\Gamma_1\cong  2\times S_{n-3}$;

$\Gamma_{2,l}=\Gamma_2\cap\Gamma_l$ for $l\geq 3$: we have that $\Gamma_{2,l}\cong D_8\times S_{n_1-4}\times  S_{n_2}$ with $n_1+n_2=n$, thus $\Gamma_{2,l}$ is a maximal subgroup of $\Gamma_2\cong  D_8\times S_{n-4}$;

$\Gamma_{l,k}=\Gamma_l\cap\Gamma_k$ for $l,k\geq 3$: we have that $\Gamma_{l,k}\cong S_{n_1}\times  S_{n_2}\times S_{n_3}\times  S_{n_4}$ with $n_1+n_2+n_3+n_4=n$, thus $\Gamma_{l,k}$ is a maximal subgroup of $\Gamma_l\cong  S_{n_1}\times  S_{n_2}\times S_{n_3+n_4}$.

\end{proof}

\begin{lemma}\label{Cgroup2}
The permutation representation graph (B) gives a rank $n-2$ C-group isomorphic to $S_n$.
\end{lemma}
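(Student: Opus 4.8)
The plan is to follow the strategy of Lemma~\ref{Cgroup1}: invoke Proposition~\ref{2E16} and induct on $n$, so that it suffices to show that every maximal parabolic $\Gamma_i$ is a C-group and that $\Gamma_i\cap\Gamma_j=\Gamma_{i,j}$ for all $0\le i,j\le n-3$. Label graph (B) so that the central vertex $c$ (incident to the $0$-, $1$- and $2$-edges) gives $\rho_0=(b\,c)$, $\rho_1=(a\,b)(c\,d)$ and $\rho_2=(c\,e)$, where $\{a,b\}$ is the second $1$-edge, $d$ is the apex of the vertical $1$-edge, $e$ is the first tree vertex, $W$ is the set of the $n-4$ tree vertices other than $c$ (so $c$ is a leaf of the tree), and $\rho_3,\dots,\rho_{n-3}$ are the remaining tree edges. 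First I would read the parabolics off the graph. Deleting $\rho_1$ isolates $a$ and $d$ and leaves a tree on the other $n-2$ points, so $\Gamma_1\cong S_{n-2}$ is a C-group by Theorem~\ref{CaCa}. Deleting $\rho_2$ detaches the four points $\{a,b,c,d\}$, on which $\langle\rho_0,\rho_1\rangle$ is $D_8$ (one checks $\rho_0\rho_1$ is a $4$-cycle), giving $\Gamma_2\cong D_8\times S_{n-4}$, a product of C-groups. Deleting a tree edge $\rho_l$ ($l\ge3$) cuts the tree into $A\ni c$ and $B$; the component carrying $A$ is a smaller instance of graph (B) on $|A|+3<n$ points, so $\Gamma_l\cong S_{|A|+3}\times S_{|B|}$ is a C-group by induction and Theorem~\ref{CaCa}. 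Finally, deleting $\rho_0$ leaves the two orbits $\{a,b\}$ and $\{c,d\}\cup W$; conjugating $\rho_2$ by $\rho_1$ produces $(d\,e)$, and I would use this to recover the full symmetric group on the large orbit inside the stabiliser of $\{a,b\}$, giving $\Gamma_0\cong 2\times S_{n-2}$, a (reducible) C-group exactly as in Lemma~\ref{Cgroup1}.

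With the parabolics identified, the second step is to verify $\Gamma_i\cap\Gamma_j=\Gamma_{i,j}$ for every pair. Each $\Gamma_{i,j}$ is read directly off the graph as a product of symmetric groups (together with a $D_8$ factor precisely when $\{i,j\}=\{2,l\}$ with $l\ge3$, this $D_8$ being the same one occurring in $\Gamma_2$). For every pair except $\{0,2\}$ I would exhibit $\Gamma_{i,j}$ as a maximal subgroup of one of $\Gamma_i,\Gamma_j$ and note that the intersection is a proper subgroup of that parabolic, forcing equality; for instance $\Gamma_{0,1}\cong S_{n-3}$ is a point stabiliser in $\Gamma_1\cong S_{n-2}$, and $\Gamma_{1,2}\cong 2\times S_{n-4}$ is the stabiliser of a $2$-set in $\Gamma_1$. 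In the remaining pairs the relevant factor is a Young subgroup $S_a\times S_b$; when $a\ne b$ this is maximal, and the only way maximality can fail is the balanced case $a=b$, where $S_a\times S_a<S_a\wr S_2$. I would rule this out by observing that $\Gamma_i$ and $\Gamma_j$ preserve \emph{different} orbit partitions, so the block-swapping element of $S_a\wr S_2$ lies in neither parabolic; this is the device already implicit in Lemma~\ref{Cgroup1}.

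The step I expect to be the real obstacle is $\Gamma_0\cap\Gamma_2=\Gamma_{0,2}$, since here $\Gamma_{0,2}\cong 2\times S_{n-4}$ is \emph{not} maximal in $\Gamma_2\cong D_8\times S_{n-4}$ (the factor $\langle\rho_1\rangle\cong C_2$ is not maximal in $D_8$), so the routine argument breaks down, exactly as for the analogous pair in Lemma~\ref{Cgroup1}. I would settle it by the same square argument: $\rho_1$ acts on $\{a,b,c,d\}$ as a fixed-point-free reflection of the square, so any subgroup of $\Gamma_2$ lying strictly between $\Gamma_{0,2}$ and $\Gamma_2$ must contain a further reflection or the central rotation and hence act transitively on $\{a,b,c,d\}$; but $\Gamma_0\cap\Gamma_2\le\Gamma_0$, and $\Gamma_0$ keeps $\{a,b\}$ and $\{c,d\}$ in distinct orbits, so it cannot be transitive on these four points, a contradiction. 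Once the intersection property is established, $\Gamma$ is a C-group of rank $n-2$ by Proposition~\ref{2E16}; and since $\Gamma$ is transitive and contains $\mathrm{Sym}(Y)$ on the set $Y=\{b,c\}\cup W$ of $n-2>n/2$ points (namely $\Gamma_1$), a transitive group with such a large symmetric subgroup is primitive, and as it contains a $3$-cycle it contains $A_n$ by Jordan's theorem, whence the odd permutation $\rho_0$ forces $\Gamma\cong S_n$. The finitely many base cases of the induction are checked with {\sc Magma}.
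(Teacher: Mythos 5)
Your proof is correct and follows essentially the same route as the paper's: reduce to the pairwise condition $\Gamma_i\cap\Gamma_j=\Gamma_{i,j}$ via Proposition~\ref{2E16} and induction, handle most pairs by maximality of the relevant Young-type subgroup, and settle the exceptional pair $\{0,2\}$ by the same $D_8$/orbit argument on the four points $\{a,b,c,d\}$. You fill in a few details the paper leaves implicit (the balanced case $S_a\times S_a$ where maximality fails, and the identification of the full group as $S_n$), but the strategy is identical.
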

\begin{proof}
By Proposition~\ref{2E16}, and using induction on $n$ as in the previous lemma, it is sufficient to prove that  $\Gamma_i\cap \Gamma_j=\Gamma_{i,j}$ for every $i,j\in I$. 

To prove the equalities $\Gamma_{0,2}=\Gamma_0\cap\Gamma_2$, $\Gamma_{1,2}=\Gamma_1\cap\Gamma_2$, $\Gamma_{0,l}=\Gamma_0\cap\Gamma_l$ for $l\geq 3$, $\Gamma_{2,l}=\Gamma_2\cap\Gamma_l$ for $l\geq 3$, and $\Gamma_{l,k}=\Gamma_l\cap\Gamma_k$ for $l,k\geq 3$,  we can use the same argument used  in Lemma~\ref{Cgroup1}.  Hence to prove that $\Gamma$ is a C-group only the following two equalities are needed.

$\Gamma_{0,1}=\Gamma_0\cap\Gamma_1$: we have $\Gamma_{0,1}\cong S_{n-3}$, thus is $\Gamma_{0,1}$ a maximal subgroup of $\Gamma_1\cong  S_{n-2}$;

$\Gamma_{1,l}=\Gamma_1\cap\Gamma_l$ for $l\geq 3$: we have that $\Gamma_{1,l}\cong S_{n_1-2}\times  S_{n_2}$ with $n_1+n_2=n$, thus $\Gamma_{1,l}$ is a maximal subgroup of $\Gamma_1\cong   S_{n-2}$;

\end{proof}
\begin{lemma}\label{Cgroup3}
The permutation representation graph (C) gives a rank $n-2$ C-group isomorphic to $S_n$.
\end{lemma}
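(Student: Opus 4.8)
The plan is to follow the template of Lemmas~\ref{Cgroup1} and~\ref{Cgroup2}: invoke Proposition~\ref{2E16} together with induction on $n$, so that it suffices to show that every maximal parabolic $\Gamma_i$ is itself a C-group and that $\Gamma_i\cap\Gamma_j=\Gamma_{i,j}$ for all $0\le i,j\le n-3$. Label the four vertices of the alternating square $a,b,c,d$ so that $\rho_0=(a\,b)(c\,d)$ and $\rho_1=(a\,c)(b\,d)$, with the tree $\mathcal{G}_{\{2,\ldots,n-3\}}$ attached at $d$ and $\rho_2=(d\,e)$ its first edge. First I would record the isomorphism types of the parabolics.

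The decisive new feature relative to graph $(A)$ is that the square is alternating, so $\langle\rho_0,\rho_1\rangle$ is the Klein four group $2^2$ acting regularly on $\{a,b,c,d\}$ (one checks $\rho_0\rho_1=(a\,d)(b\,c)$ and that $\rho_0,\rho_1$ commute), rather than the $D_8$ occurring in $(A)$. Consequently $\Gamma_2=\langle\rho_0,\rho_1,\rho_3,\ldots,\rho_{n-3}\rangle\cong 2^2\times S_{n-4}$, a direct product with split commuting generators, hence a C-group. For $\Gamma_0=\langle\rho_1,\rho_2,\ldots,\rho_{n-3}\rangle$ I would note its orbits are $\{a,c\}$ and $\{b\}\cup(\text{tree})$ and exploit a $3$-cycle trick: $\rho_1\rho_2=(a\,c)(b\,d\,e)$ has order $6$, so $(\rho_1\rho_2)^3=(a\,c)\in\Gamma_0$ and then $(b\,d)=(a\,c)\rho_1\in\Gamma_0$; the split generating set $\{(a\,c),(b\,d),\rho_2,\ldots,\rho_{n-3}\}$ exhibits $\Gamma_0\cong 2\times S_{n-2}$, the second factor being the tree-group on the $n-2$ vertices $\{b\}\cup(\text{tree})$. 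The symmetric computation gives $\Gamma_1\cong 2\times S_{n-2}$. Finally each $\Gamma_l$ with $l\ge 3$ is the direct product of a smaller graph-$(C)$ group (a C-group by the induction hypothesis) with an $S_m$ coming from the subtree cut off by removing the $l$-edge.

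For the equalities $\Gamma_i\cap\Gamma_j=\Gamma_{i,j}$ I would combine two observations. First, since $\Gamma_0$ preserves the partition $\{a,c\}\mid\{b,d,\text{tree}\}$ and $\Gamma_1$ preserves $\{a,b\}\mid\{c,d,\text{tree}\}$, any element of $\Gamma_0\cap\Gamma_1$ fixes $a,b,c$ and stabilizes the tree, forcing $\Gamma_0\cap\Gamma_1\le\{1\}\times S_{n-3}=\Gamma_{0,1}$, whence equality. Second, for the pairs involving the square, I would use that $\Gamma_{0,2}\cong\Gamma_{1,2}\cong 2\times S_{n-4}$ has index $2$, hence is maximal, in $\Gamma_2\cong 2^2\times S_{n-4}$, while the orbit-crossing facts $\rho_0\notin\Gamma_0$ and $\rho_1\notin\Gamma_1$ rule out the alternative $\Gamma_i\cap\Gamma_2=\Gamma_2$; so $\Gamma_0\cap\Gamma_2=\Gamma_{0,2}$ and $\Gamma_1\cap\Gamma_2=\Gamma_{1,2}$. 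The remaining pairs (those indexed by tree-edges, and $\Gamma_0,\Gamma_1,\Gamma_2$ against a tree-index) reduce to the maximal-Young-subgroup arguments already used in Lemmas~\ref{Cgroup1} and~\ref{Cgroup2}, again ruling out over-containment by an orbit argument.

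The hard part will be the two parabolics $\Gamma_0,\Gamma_1\cong 2\times S_{n-2}$: although their abstract structure is transparent, the distinguished generator ($\rho_1$ in $\Gamma_0$, $\rho_0$ in $\Gamma_1$) is a ``diagonal'' involution that does not split across the direct factors, so their being C-groups is not immediate and must be secured before Proposition~\ref{2E16} can be applied. This is exactly the group already met (and used) for graph $(A)$, and I would either cite that or rerun the same Proposition~\ref{2E16}/induction scheme on $\Gamma_0$ itself, where the recursion bottoms out because every deeper parabolic either splits with commuting generators or is a strictly smaller instance. Once all intersections are verified, Proposition~\ref{2E16} yields that $\Gamma$ is a C-group; transitivity of $\Gamma$ together with the transposition $\rho_2=(d\,e)$, and the fact that $\Gamma\supseteq\Gamma_0=2\times S_{n-2}$ with $\rho_0$ merging its two orbits, then forces $\Gamma\cong S_n$, completing the proof.
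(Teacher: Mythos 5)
Your proposal is correct and follows essentially the same route as the paper: apply Proposition~\ref{2E16} with induction on $n$, compute the isomorphism types $\Gamma_0\cong\Gamma_1\cong 2\times S_{n-2}$, $\Gamma_2\cong 2^2\times S_{n-4}$, deduce $\Gamma_0\cap\Gamma_1=\Gamma_{0,1}$ from the three forced fixed points, and settle the remaining pairwise intersections by maximality of $\Gamma_{i,j}$ combined with orbit arguments. Your explicit flagging of the fact that $\Gamma_0$ and $\Gamma_1$ carry a ``diagonal'' generator $\rho_1=(a\,c)(b\,d)$ (so that their C-group property is not a split direct-product triviality and must be secured separately, e.g.\ by identifying them with the corresponding parabolic from case (A)) is a point the paper passes over more quickly, but it does not change the argument.
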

\begin{proof}
By Proposition~\ref{2E16}, and using induction on $n$ as in the previous lemmas, it is sufficient to prove that  $\Gamma_i\cap \Gamma_j=\Gamma_{i,j}$ for every $i,j\in I$. 

As $\mathcal{G}_{0,1}$ is a tree with $n-3$ vertices, $\Gamma_{0,1}$ is a C-group isomorphic to $S_{n-3}$. In addition we have the following equalities:

 $\Gamma_{0,1}=\Gamma_0\cap\Gamma_1$: we have $\Gamma_{0,1}\cong S_{n-3}$ and $\Gamma_0\cong  2\times S_{n-2}$. Obviously, $\Gamma_0\cap\Gamma_1$ has three fixed points, hence the equality follows.

$\Gamma_{0,2}=\Gamma_0\cap\Gamma_2$: we have $\Gamma_{0,2}\cong 2\times S_{n-4}$, thus $\Gamma_{0,2}$  is a maximal subgroup of $\Gamma_2\cong  2^2\times S_{n-4}$;
 
 $\Gamma_{0,l}=\Gamma_0\cap\Gamma_l$ for $l\geq 3$: we have that $\Gamma_{0,l}\cong 2\times S_{n_1-2}\times  S_{n_2}$ with $n_1+n_2=n$, thus $\Gamma_{0,l}$ is a maximal subgroup of $\Gamma_0\cong  2\times S_{n-2}$;

 $\Gamma_{1,2}=\Gamma_1\cap\Gamma_2$: we have that $\Gamma_{1,2}\cong 2\times S_{n-4}$, thus $\Gamma_{1,2}$ is a maximal subgroup of $\Gamma_2\cong  2^2\times S_{n-4}$; 

$\Gamma_{1,l}=\Gamma_1\cap\Gamma_l$ for $l\geq 3$: we have that $\Gamma_{1,l}\cong 2\times S_{n_1-2}\times  S_{n_2}$ with $n_1+n_2=n$, thus $\Gamma_{1,l}$ is a maximal subgroup of $\Gamma_1\cong  2\times S_{n-2}$;

$\Gamma_{2,l}=\Gamma_2\cap\Gamma_l$ for $l\geq 3$: we have that $\Gamma_{2,l}\cong 2^2\times S_{n_1-4}\times  S_{n_2}$ with $n_1+n_2=n$, thus $\Gamma_{1,l}$ is a maximal subgroup of $\Gamma_2\cong  2^2\times S_{n-2}$;

$\Gamma_{l,k}=\Gamma_l\cap\Gamma_k$ for $l,k\geq 3$: we have that $\Gamma_{l,k}\cong S_{n_1}\times  S_{n_2}\times S_{n_3}\times  S_{n_4}$ with $n_1+n_2+n_3+n_4=n$, thus $\Gamma_{l,k}$ is a maximal subgroup of $\Gamma_l\cong  S_{n_1}\times  S_{n_2}\times S_{n_3+n_4}$.

Hence we have proved that $\Gamma_i\cap \Gamma_j=\Gamma_{i,j}$ for every $i,j\in I$ which is sufficient to show that $\Gamma$ is a C-group.

\end{proof}

The following corollary gives the Coxeter diagrams of the C-groups of Theorem~\ref{main} as well as presentations for these groups.

\begin{corollary}\label{coro6.5}
Let $n\geq 9$ and  $\Gamma:=(S_n,\{\rho_0,\ldots,\rho_{n-3}\})$ be  a C-group of rank $n-2$ with one of the three possible permutation representations given in Theorem~\ref{main}.
\begin{enumerate}
\item The Coxeter diagram of $\langle \rho_0,\rho_1,\rho_2,\rho_3\rangle$ is one of the following, accordantly to its permutation representation graph.
$$ (A) \qquad\xymatrix@-1pc{ &&&\\
*+[o][F]{0}   \ar@{-}[r]^4 & *+[o][F]{1}  \ar@{-}[r]^6  & *+[o][F]{2} \ar@{-}[r]& *+[o][F]{3}\\
&&&}
\qquad (B) \xymatrix@-1pc{  *+[o][F]{0}   \ar@{-}[dd]_4\ar@{-}[dr] & &\\
& *+[o][F]{2}  \ar@{-}[ld]^6\ar@{-}[r]& *+[o][F]{3}\\
 *+[o][F]{1} && } \qquad (C) \qquad\xymatrix@-1pc{  *+[o][F]{0}   \ar@{-}[dr]^6 & &\\
& *+[o][F]{2}  \ar@{-}[ld]^6\ar@{-}[r]& *+[o][F]{3}\\
 *+[o][F]{1} && }$$
 \item $\rho_0, \,\rho_1$ and $\rho_2$ commute with $\rho_i$ for $i\geq 4$;
\item The Coxeter diagram of $\langle \rho_3,\ldots,\rho_{n-3}\rangle$ is the line graph of $\mathcal{G}_{3,\ldots,n-3\}}$ that is an IMG graph;
\item If  $\{\rho_i\rho_j\rho_k\}$, for $i,j,k\geq 3$, is a triangle of the Coxeter diagram then $(\rho_i\rho_j\rho_i\rho_k)^2=1$;
\item The hypertopes with permutation representation $(A)$, $(B)$ and $(C)$ satisfy the following relations, respectively. 
$$(A)\quad [(\rho_1\rho_2)^3\rho_0]^3=[\rho_3(\rho_1\rho_2)^3]^2=(\rho_0\rho_1\rho_2\rho_1)^3=1$$
$$ (B)\quad  [\rho_0(\rho_1\rho_2)^3]^3=(\rho_1\rho_0\rho_1\rho_2)^2=[(\rho_1rho_2)^3\rho_3]^2=1$$
$$ (C)\quad [(\rho_0\rho_2)^3(\rho_1\rho_2)^3\rho_1]^3=[(\rho_0\rho_2)^3(\rho_1\rho_2)^3]^3=[(\rho_1\rho_2)^3\rho_3]^2=[(\rho_0\rho_2)^3\rho_3]^2=1$$
\end{enumerate}
Moreover the relations given by the Coxeter diagram (characterized by (1), (2) and (3)), plus the relations given in (4) and (5)  are sufficient to define the automorphism group $\Gamma$.
\end{corollary}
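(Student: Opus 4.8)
The plan is to dispatch the five numbered items by explicit computation in the permutation representations and then to treat the concluding presentation claim as the real theorem. For item~(1) I would read off each displayed graph the explicit permutations it prescribes on $\{1,\dots,6\}$; for instance in family $(A)$ one has $\rho_0=(2\,3)$, $\rho_1=(1\,2)(3\,4)$, $\rho_2=(4\,5)$, $\rho_3=(5\,6)$, and a one-line computation of the orders of $\rho_0\rho_1$, $\rho_1\rho_2$, $\rho_2\rho_3$ (together with the commuting of the remaining pairs) yields the labels $4,6,3$; families $(B)$ and $(C)$ are handled identically. Item~(2) is immediate from supports: $\rho_0,\rho_1,\rho_2$ move only points in $\{1,\dots,5\}$, whereas for $i\ge 4$ the transposition $\rho_i$ is an edge of the tree lying on vertices $\ge 6$, so the two commute. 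Item~(3) holds because $\rho_3,\dots,\rho_{n-3}$ are transpositions whose edges form a forest, and the product of two transpositions has order $3$ when they share a vertex and $2$ otherwise, which is exactly the adjacency of the line graph, an IMG graph by the remarks preceding Proposition~\ref{preS}. Item~(4) is the computation already performed inside the proof of Proposition~\ref{preS}: in a forest three pairwise-adjacent edges share a common vertex, and for such a triangle $(\rho_i\rho_j\rho_i\rho_k)^2=1$.

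Item~(5) is again a direct verification, and the key simplification is that the order-$6$ products collapse: in $(A)$, $\rho_1\rho_2=(1\,2)(3\,4\,5)$, so $(\rho_1\rho_2)^3=(1\,2)$, and likewise $(\rho_0\rho_2)^3$ reduces to a transposition in $(C)$. Substituting these, each bracketed word becomes a short permutation whose order is visibly $3$ or $2$, as asserted; this is a handful of such reductions in each family. It remains to prove the final sentence. Let $W$ be the abstract group on generators $\rho_0,\dots,\rho_{n-3}$ defined by the relations singled out in~(1)--(5). Since $\Gamma\cong S_n$ satisfies all of them there is an epimorphism $\pi\colon W\twoheadrightarrow S_n$, and it suffices to prove $|W|\le n!$.

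I would prove $|W|\le n!$ by induction on $n$, the base case $n=9$ being an explicit {\sc Magma} check that the relations present $S_9$. For the inductive step, relabel so that $\rho_{n-3}$ is the transposition of a leaf of the tree $\mathcal{G}_{\{2,\dots,n-3\}}$ sitting on a vertex $\ge 6$ (such a leaf exists once $n$ is large). The word subgroup $W_{n-3}=\langle\rho_0,\dots,\rho_{n-4}\rangle$ is then presented by exactly the relations of families $(A)$, $(B)$, $(C)$ for $n-1$ points: the core relations~(5) involve only $\rho_0,\rho_1,\rho_2,\rho_3$ and so are inherited untouched, while the tree relations restrict to those of the pruned tree. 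Hence $|W_{n-3}|=(n-1)!$ by the inductive hypothesis. The only relations of $W$ involving $\rho_{n-3}$ make it commute with every other generator except its unique tree-neighbour $\rho_m$, with which it braids, together with a single triangle relation of type~(4) when the attaching vertex has degree $\ge 3$. These are precisely the relations that Proposition~\ref{preS} shows suffice to grow the symmetric group $S_{n-1}$ acting on the pruned tree into $S_n$ acting on the whole tree. Running the coset enumeration of $W_{n-3}$ in $W$ accordingly, the cosets are permuted by right multiplication exactly as the $n$ points of $\{1,\dots,n\}$; the enumeration closes after $n$ cosets, so $[W:W_{n-3}]\le n$ and $|W|\le n\,(n-1)!=n!$. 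With $\pi$ this forces $W\cong S_n$, so the listed relations define $\Gamma$.

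The hard part is the inductive step, and precisely the claim that the coset enumeration closes at exactly $n$ cosets: one must check that attaching the new leaf at a vertex of degree $\ge 3$ produces no collapse beyond the one governed by the single triangle relation, i.e.\ that no identity outside the listed families is forced. This is exactly the delicate relation-tracking carried out in Proposition~\ref{preS}, here localised to the tree and performed over the fixed core $\langle\rho_0,\rho_1,\rho_2\rangle$, with which $\rho_{n-3}$ commutes. The role of the special relations~(5) is confined to the base case: they are what collapses the infinite rank-$3$ Coxeter group on $\{\rho_0,\rho_1,\rho_2\}$ — of string type $[4,6]$ in $(A)$, the hyperbolic $(3,4,6)$-triangle in $(B)$, and string type $[6,6]$ in $(C)$ — down to the correct finite subgroup of $S_5$, and analogously pin down $\langle\rho_0,\rho_1,\rho_2,\rho_3\rangle$.
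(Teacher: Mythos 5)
Your verification of items (1)--(5) is fine and is essentially all the paper does for that part. For the concluding presentation claim, however, you take a genuinely different route from the paper's: the paper inducts on the number of unlabelled triangles of the Coxeter diagram, using the edge-moving construction of Proposition~\ref{preS} to reduce to the triangle-free case, which it then settles by an explicit change of generators from the standard Coxeter presentation of $S_n$ (writing each $\rho_i$ in terms of the $\alpha_j$ and rewriting the Coxeter relations). Your route --- induction on $n$ with a leaf-pruning and a coset-enumeration bound $[W:W_{n-3}]\le n$ --- is a reasonable alternative in outline, but as written it has a genuine gap in the inductive step.

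First, the step is set up around a leaf edge of the tree $\mathcal{G}_{\{2,\ldots,n-3\}}$ that is vertex-disjoint from the supports of $\rho_0,\rho_1,\rho_2$, so that $\rho_{n-3}$ commutes with the core and satisfies only one braid relation and at most one triangle relation. Such a leaf need not exist: if the tree is a star centred at the attachment vertex, every leaf edge meets that vertex, $\rho_{n-3}$ then braids with $\rho_1$, $\rho_2$ and every other tree generator, and the ``pruned tree plus one relation'' picture collapses (nor is the number of triangle relations ever bounded by one when the attaching vertex has degree above three). Second, and more seriously, the assertion that the coset enumeration of $W_{n-3}$ in $W$ closes after $n$ cosets is precisely the statement $[W:W_{n-3}]\le n$ that carries the whole proof, and you do not prove it. The nontrivial checks are, for each vertex $v$ fixed by $\rho_i$ and each chosen coset representative $w_v$, that $w_v\rho_i w_v^{-1}\in W_{n-3}$ is a consequence of the listed relations; none of these are carried out. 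Deferring them to ``the relation-tracking of Proposition~\ref{preS}'' does not work: that proposition rewrites one presentation into another under a change of generating set, in the rank-$(n-1)$, all-transpositions setting; it performs no coset enumeration and says nothing about cosets of a subgroup of the abstract group $W$. Moreover, when the leaf must be attached near the core (as in the star case) the stabiliser checks involve the $2$-transpositions $\rho_0,\rho_1$ and the relations in (5), contradicting your claim that those relations only play a role in the base case $n=9$. Until the closure of the enumeration is actually verified from the listed relations for an arbitrary admissible tree, the index bound, and hence $|W|\le n!$, is unproved.
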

\begin{proof}
If $\Gamma$ is a C-group of rank $n-2$ for $S_n$ with $n\geq 9$ then by  Theorem~\ref{main} and Proposition~\ref{preS} all items of this Corollary can be easily verified. In what follows we prove that these relations are sufficient  to characterise the groups of these C-groups.

We proceed by  induction over the number of nonlabelled triangles of the Coxeter diagram.
Starting from a C-group of rank $n-2$ for $S_n$ having $t\geq 1$ nonlabelled triangles on its Coxeter diagram we can reduce the number of triangles using the construction given in Proposition~\ref{preS}. Note that in any of the three permutation representations $(A)$, $(B)$ and $(C)$ it is possible to apply the construction used in Proposition~\ref{preS}. Indeed, if $t>0$, in any of the three cases there are two vertices of degree one and a path between them not containing the first four generators. Thus only the cases without nonlabelled triangles ($t=0$) need to be analysed. In what follows let $\alpha_0,\ldots, \alpha_{n-2}$ be the generators of the string C-group $[3^{n-2}]$. 

Let us first consider the C-group with permutation representation $(A)$ and $t=0$, that is, the string C-group of rank $n-2$ for the symmetric group of type $\{4,6,3^{n-5}\}$. We can derive a presentation of the group of this C-group  from the finite Coxeter group $[3^n]$. Let
$$\rho_0=\alpha_1,\;\rho_1=\alpha_0\alpha_2\mbox{ and  }\rho_i=\alpha_{i+1}\mbox{ for }i\in\{2,\ldots, n-3\}. $$
where $\alpha_{0},\ldots,\alpha_{n-2}$ are the standard generators of $[3^n]$.
We have, 
$$\alpha_0=(\rho_1\rho_2)^3\mbox{ and }\alpha_2=(\rho_1\rho_2)^3\rho_1.$$
 Now from the relations of the Coxeter group $[3^n]$ we derive, apart from the relations giving the type, two other relations: $((\rho_1\rho_2)^3\rho_0)^3=1$, $((\rho_1\rho_2)^3\rho_3)^2=1$ and $(\rho_0\rho_1\rho_2\rho_1)^3=1$. This proves that the relations given is this corollary are sufficient to give a presentation of the group of this C-group. 
 
 Now consider the C-group with permutation representation $(B)$ and $t=0$. In this case let,
 $$\rho_0=\alpha_2,\, \rho_1=(\alpha_0,\alpha_2\alpha_1)^2,\, \rho_i=\alpha_{i+1}\mbox{ for }i\in\{2,\ldots, n-3\}.$$
 We have, 
$$\alpha_0=\rho_1\rho_0\rho_1\mbox{ and }\alpha_1=(\rho_1\rho_2)^3.$$
 Now from the relations of the Coxeter group $[3^n]$ we derive, apart from the relations giving the type, only two extra relations:  $[\rho_0(\rho_1\rho_2)^3]^3=(\rho_1\rho_0\rho_1\rho_2)^2=[(\rho_1\rho_2)^3\rho_3]^2=1$. This proves that the relations given is this corollary are sufficient to give a presentation of the group of this C-group.
 
For the C-group with permutation representation (C) and $t=0$ we again derive a presentation from the group of the C-group without triangles in its Coxeter diagram from  $[3^n]$ changing the generating set as follows: 
$$\rho_0=(\alpha_0\alpha_2\alpha_1)^2,\;\rho_1=\alpha_0\alpha_2\mbox{ and }\rho_i=\alpha_{i+1}\mbox{ for }i\in\{2,\ldots, n-3\}$$
On the other hand we have, 
$$\alpha_0=(\rho_1\rho_2)^3, \;\alpha_1=(\rho_0\rho_2)^3\mbox{ and }\alpha_2=(\rho_1\rho_2)^3\rho_1.$$
Now from the relations of the Coxeter group $[3^n]$ we derive, apart from the relations giving the type, the two relations given in this corollary. 
\end{proof}
 
Observe that (1), (2) and (3) of Corollary~\ref{coro6.5} permit to say that the number of C-groups of rank $n-2$ up to isomorphism and duality is divisible by 3. For $S_n$ with $n\geq 9$, it corresponds to three times the number of leaves in the permutation representation graphs of the inductively minimal geometries of $S_{n-3}$.

It now remains to prove that the coset geometries obtained from the permutation representation graphs (A), (B) and (C) are all flag-transitive in order to show that these C-groups are all giving regular hypertopes.
\begin{theorem}
The coset geometries obtained from the permutation representation graphs (A), (B) and (C) are flag-transitive.
\end{theorem}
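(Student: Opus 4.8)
The plan is to prove flag-transitivity by induction on $n$, using Theorem~\ref{FTlee2} as the engine and reducing the whole question to a bounded collection of rank~$3$ residues. The base case $n=9$ is settled by an exhaustive computation in {\sc Magma}, exactly as in the analysis of Section~\ref{transitive}. For $n\geq 10$ I would assume, as induction hypothesis, that the coset geometries of all three families $(A)$, $(B)$, $(C)$ for $S_{n-1}$ are flag-transitive; equivalently, by Proposition~\ref{FTcgroup}, that they are regular hypertopes.

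The key choice is the subgroup $H$ to which Theorem~\ref{FTlee2} is applied. I would take $H:=\Gamma_k$, where $\rho_k$ is a transposition attached to a leaf of the underlying tree that is far from the four core generators $\rho_0,\rho_1,\rho_2,\rho_3$; such a leaf exists since the tree has $n-3\geq 6$ vertices. The point $p$ moved only by $\rho_k$ is then fixed by every other generator, so $H$ lies in the stabiliser $S_{n-1}$ of $p$, and deleting $p$ together with the $k$-edge produces a permutation representation graph that is again of family $(A)$, $(B)$ or $(C)$ but for $S_{n-1}$. Hence $H\cong S_{n-1}$. Because $\Gamma$ is a C-group (Lemmas~\ref{Cgroup1}, \ref{Cgroup2} and~\ref{Cgroup3}), we have $\Gamma_j\cap H=\Gamma_j\cap\Gamma_k=\Gamma_{j,k}$, so the subgroups induced on $H$ are precisely the maximal parabolics of this smaller C-group. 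The geometry $\Gamma'(H,\{\Gamma_{j,k}\}_{j\neq k})$ is therefore flag-transitive by the induction hypothesis, supplying one of the two standing hypotheses of Theorem~\ref{FTlee2}.

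The second standing hypothesis is the flag-transitivity of the rank-$(n-3)$ geometry $\Gamma(S_n,\{\Gamma_j\}_{j\neq k})$ obtained by dropping type~$k$. I would establish this by a downward induction on the rank: peel leaves away one at a time, at each stage re-adding one parabolic through a further application of Theorem~\ref{FTlee2}, until reaching a rank-$2$ coset geometry, which is automatically flag-transitive by the last assertion of Proposition~\ref{tits}. At each such step the analogue of the hypothesis on $H$ is again supplied by the outer induction on $n$ (the relevant parabolics being sub-geometries of the $S_{n-1}$ geometries), so this inner descent introduces no new content beyond bookkeeping.

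With both hypotheses of Theorem~\ref{FTlee2} in hand, flag-transitivity of the full geometry is equivalent to flag-transitivity of every rank-$3$ residue $\Gamma_{(ij)}(S_n,\{\Gamma_i,\Gamma_j,\Gamma_k\})$ with $i\neq j$. For these I would invoke Theorem~\ref{hermandft}, whose single condition for $\vert J\vert=3$ reduces to an identity of the form $\bigl(\Gamma_i\Gamma_k\bigr)\cap\bigl(\Gamma_j\Gamma_k\bigr)=(\Gamma_i\cap\Gamma_j)\Gamma_k$ after a convenient choice of the pivot $\alpha(J)$. Whenever the three types involve generators of pairwise disjoint support this identity follows immediately from the explicit direct-product descriptions of the parabolics recorded in Lemmas~\ref{Cgroup1}--\ref{Cgroup3}, since products of subgroups acting on disjoint point sets commute with intersection. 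The main obstacle, and the only genuinely non-formal part, is the finitely many residues meeting the core $\{\rho_0,\rho_1,\rho_2,\rho_3\}$, where the factor $D_8$ appearing in $\Gamma_2\cong D_8\times S_{n-4}$ encodes the alternating-square interaction between the $2$-transpositions and the tree. I expect these core residues to require a direct check, either by the argument used for $\Gamma_0\cap\Gamma_2$ in Lemma~\ref{Cgroup1} or by {\sc Magma}, and they are where flag-transitivity could a priori fail.
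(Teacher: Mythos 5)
Your overall architecture is the same as the paper's: induct on $n$, take $H=\Gamma_k$ for a leaf label $k$ so that $H\cong S_{n-1}$ and the induced geometry on $H$ is flag-transitive by induction, then feed everything into Theorem~\ref{FTlee2} so that only the rank-three incidence systems $\Gamma_{(ij)}(S_n,\{\Gamma_i,\Gamma_j,\Gamma_k\})$ remain to be checked. The difference --- and it is a genuine gap, not a stylistic one --- is at that last step, which is where all the content of the theorem sits. You reduce each rank-three check to the Buekenhout--Hermand identity $(\Gamma_i\Gamma_k)\cap(\Gamma_j\Gamma_k)=(\Gamma_i\cap\Gamma_j)\Gamma_k$, dispose of the pairs with disjoint support, and then defer the residues meeting the core $\{\rho_0,\rho_1,\rho_2,\rho_3\}$ to ``a direct check\ldots or by {\sc Magma}'', conceding that flag-transitivity ``could a priori fail'' there. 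But those core cases are not finitely many computations: the subgroups involved, e.g.\ $\Gamma_0\cong 2\times S_{n-2}$ and $\Gamma_2\cong D_8\times S_{n-4}$ from Lemma~\ref{Cgroup1}, have order and index growing with $n$, so the identity above is a separate assertion for every $n$ and cannot be settled by a machine check of small cases. As written, your proof is incomplete exactly at the point where it could fail.

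The idea you are missing is the one that makes all the rank-three checks uniform: the pivot $\Gamma_k=\Gamma_{n-3}$ is not merely isomorphic to $S_{n-1}$, it \emph{is} the full stabiliser in $S_n$ of the leaf point $p$. Hence for any subgroup $X\leq S_n$ one has $\Gamma_{n-3}X=\{g\in S_n : pg\in p^{X}\}$, and the Hermand identity collapses to the orbit equality $p^{\Gamma_i}\cap p^{\Gamma_j}=p^{\Gamma_{i,j}}$. This is verified in one stroke for every pair $i,j$ --- core pairs included --- by writing an element $\alpha$ of $\Gamma_{n-3}\Gamma_i\cap\Gamma_{n-3}\Gamma_j$ as $\nu_{n-3}\nu_j$ with $\nu_{n-3}$ fixing $p$, so that $p\alpha=p\nu_j\in p^{\Gamma_j}$, and reading the orbits of $p$ off the permutation representation graph; no case distinction between ``disjoint support'' and ``core'' residues is needed. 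A secondary remark: you are right that Theorem~\ref{FTlee2} also requires the rank-$(n-3)$ geometry $\Gamma(S_n,\{\Gamma_j\}_{j\neq k})$ to be flag-transitive, a hypothesis the paper passes over in silence; but your proposed leaf-peeling descent does not come for free either, since after deleting a type the ambient group is still $S_n$ and the surviving parabolics are unchanged, so each stage again needs the orbit argument rather than the induction hypothesis on $S_{n-1}$.
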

\begin{proof}
Let $\Gamma$ be a coset geometry of rank $n-2$ obtained from one of the permutation representation graphs.
Recall that for each such geometry, the maximal parabolic subgroups $\Gamma_i$ are intransitive subgroups on $n$ points.
Choose one leaf on the right hand side of the permutation representation graph of the C-groups we obtained,  and suppose its edge-label is $n-3$. Then $\Gamma_{n-3}$ is isomorphic to $S_{n-1}$ by induction and $\Gamma(\Gamma_{n-3},\{\Gamma_0 \cap \Gamma_{n-3}, \ldots,$ $\Gamma_{n-4} \cap \Gamma_{n-3}\} )$ is a flag-transitive geometry.
Using Theorem~\ref{FTlee2}, we just need to check that $\Gamma_{(ij)}(S_n,$ $\{\Gamma_i,\Gamma_j,\Gamma_{n-3}\})$ is flag-transitive for all $0<i,j < n-4$ in order to prove that $\Gamma$ is flag-transitive.
Observe that by Lemmas~\ref{Cgroup1},~\ref{Cgroup2} and~\ref{Cgroup3}, we have $\Gamma_i \cap \Gamma_j = \Gamma_{i,j}$.
Let $p$ be the point fixed by $\Gamma_{n-3}$.
We have that $\Gamma_{n-3}(\Gamma_i\cap \Gamma_j) = \Gamma_{n-3}\Gamma_{i,j}$ and since $\Gamma_{n-3} \cong S_{n-1}$, this is the set of all the elements of $S_n$ that map $p$ to any point of the orbits $p^{\Gamma_{i,j}}$.
Now, assume by way of contradiction that $H:= \Gamma_{n-3}\Gamma_i \cap \Gamma_{n-3}\Gamma_j > \Gamma_{n-3}(\Gamma_i\cap \Gamma_j)$. Then there exists $\alpha\in H$ such that $p\alpha =: q \not\in p^{\Gamma_{i,j}}$. This point must then be either in $p^{\Gamma_{i}}\setminus p^{\Gamma_{j}}$ or in  $p^{\Gamma_{j}}\setminus p^{\Gamma_{i}}$. Suppose without loss of generality that $q \in p^{\Gamma_{i}}\setminus p^{\Gamma_{j}}$. 
As $\alpha\in H$,  $\alpha = \gamma_{n-3}\gamma_i = \nu_{n-3} \nu_j$ for some $\gamma_{n-3},\nu_{n-3}\in \Gamma_{n-3}$, $\gamma_i\in\Gamma_i$ and $\nu_j\in\Gamma_j$. But since  $p\alpha=p(\nu_{n-3}\nu_j)=p \nu_j$  we cannot have $p\alpha = q$, a contradiction. 
Hence  $\Gamma_{(ij)}(S_n,$ $\{\Gamma_i,\Gamma_j,\Gamma_{n-3}\})$ is flag-transitive for all $0 \leq i,j \leq n-4$ and by Theorem~\ref{FTlee2}, $\Gamma$ is flag-transitive.
\end{proof}
\section{Acknowledgements}
This research was supported by a Marsden grant (UOA1218) of the Royal Society of New Zealand and  by the Portuguese funds through the CIDMA - Center for Research and Development in Mathematics and Applications, and the Portuguese Foundation for Science and Technology (FCT- Fundação para a Ciência e a Tecnologia), within project PEst-OE/MAT/UI4106/2014.

\bibliographystyle{plain}

\end{document}